\def \NN{\mathbb N}
\def \ZZ{\mathbb Z}
\def \RR{\mathbb R}
\def \ag{A}
\def \FF{\mathcal F}
\newcommand{\symb}[1]{{#1}}
\newcommand{\define}[1]{\textbf{#1}}
\newcommand{\isdef}{:=}
\newcommand{\seed}{
	\clip (-0.5,-0.5) rectangle (+0.5,+0.5);
	\draw[black, fill =black!80] (-0.5,0.5)--(0,0)--(-0.5,-0.5)--cycle;
	\draw[black, fill=black!80] (-0.5,-0.5)--(0,0)--(+0.5,-0.5)--cycle;
	\draw[black, fill=black!80] (0.5,-0.5)--(0,0)--(+0.5,+0.5)--cycle;
	\draw[black, fill=black!50] (0.5,0.5)--(0,0)--(-0.5,+0.5)--cycle;
	\node at (0,0.35) {\color{white}\textbf{$\texttt{S}$}};
    \draw[black, fill=white] (-0.25,-0.25)rectangle(0.25,0.25);
	\node at (0,0) {\includegraphics[scale=0.2]{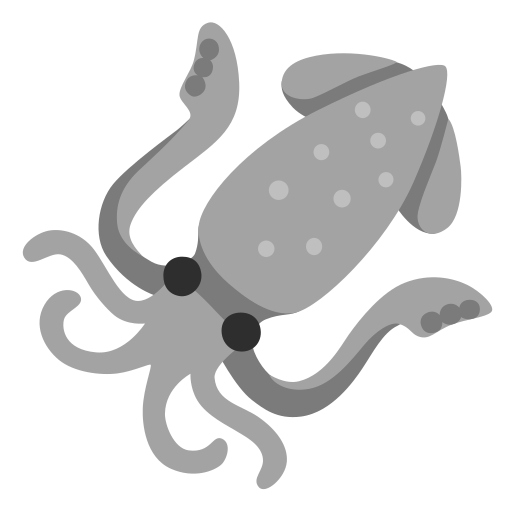}};
}
\newcommand{\seedL}{
	\clip (-0.5,-0.5) rectangle (+0.5,+0.5);
	\draw[black, fill = black!80] (-0.5,0.5)--(0,0)--(-0.5,-0.5)--cycle;
	\draw[black, fill=black!50] (-0.5,-0.5)--(0,0)--(+0.5,-0.5)--cycle;
	\draw[black, fill=black!50] (0.5,-0.5)--(0,0)--(+0.5,+0.5)--cycle;
	\draw[black, fill=black!50] (0.5,0.5)--(0,0)--(-0.5,+0.5)--cycle;
	\node at (0,0.35) {\color{white}\textbf{$\texttt{B}$}};
	\node at (0,-0.35) {\color{white}\textbf{$\texttt{S}$}};
	\node at (0.35,0) {\color{white}\textbf{$\texttt{C}$}};
    \draw[black, fill=white] (-0.25,-0.25)rectangle(0.25,0.25);
	\node at (0,0) {\includegraphics[scale=0.2]{octopus_gs.png}};
}
\newcommand{\seedR}{
	\clip (-0.5,-0.5) rectangle (+0.5,+0.5);
	\draw[black, fill = black!50] (-0.5,0.5)--(0,0)--(-0.5,-0.5)--cycle;
	\draw[black, fill=black!80] (-0.5,-0.5)--(0,0)--(+0.5,-0.5)--cycle;
	\draw[black, fill=black!80] (0.5,-0.5)--(0,0)--(+0.5,+0.5)--cycle;
	\draw[black, fill=black!50] (0.5,0.5)--(0,0)--(-0.5,+0.5)--cycle;
	\node at (-0.35,0) {\color{white}\textbf{$\texttt{C}$}};
	\node at (0,0.35) {\color{white}\textbf{$\texttt{C}$}};
}
\newcommand{\lefttile}{
	\clip (-0.5,-0.5) rectangle (+0.5,+0.5);
	\draw[black, fill=black!80] (-0.5,0.5)--(0,0)--(-0.5,-0.5)--cycle;
	\draw[black, fill=black!50] (-0.5,-0.5)--(0,0)--(+0.5,-0.5)--cycle;
	\draw[black, fill=white] (0.5,-0.5)--(0,0)--(+0.5,+0.5)--cycle;
	\draw[black, fill=black!50] (0.5,0.5)--(0,0)--(-0.5,+0.5)--cycle;
	\node at (0,0.35) {\color{white}\textbf{$\texttt{B}$}};
	\node at (0,-0.35) {\color{white}\textbf{$\texttt{B}$}};
	\draw[black, fill=white] (-0.25,-0.25)rectangle(0.25,0.25);
	\node at (0,0) {\includegraphics[scale=0.2]{octopus_gs.png}};
}
\newcommand{\midtile}{
	\clip (-0.5,-0.5) rectangle (+0.5,+0.5);
	\draw[black, fill=white] (-0.5,0.5)--(0,0)--(-0.5,-0.5)--cycle;
	\draw[black, fill=black!50] (-0.5,-0.5)--(0,0)--(+0.5,-0.5)--cycle;
	\draw[black, fill=black!50] (0.5,-0.5)--(0,0)--(+0.5,+0.5)--cycle;
	\draw[black, fill=black!50] (0.5,0.5)--(0,0)--(-0.5,+0.5)--cycle;
	\node at (0.35,0) {\color{white}\textbf{$\texttt{E}$}};
	\node at (0,0.35) {\color{white}\textbf{$\texttt{O}$}};
	\node at (0,-0.35) {\color{white}\textbf{$\texttt{C}$}};
}
\newcommand{\righttile}{
	\clip (-0.5,-0.5) rectangle (+0.5,+0.5);
	\draw[black, fill=black!50] (-0.5,0.5)--(0,0)--(-0.5,-0.5)--cycle;
	\draw[black, fill=black!80] (-0.5,-0.5)--(0,0)--(+0.5,-0.5)--cycle;
	\draw[black, fill=black!80] (0.5,-0.5)--(0,0)--(+0.5,+0.5)--cycle;
	\draw[black, fill=black!50] (0.5,0.5)--(0,0)--(-0.5,+0.5)--cycle;
	\node at (-0.35,0) {\color{white}\textbf{$\texttt{E}$}};
	\node at (0,0.35) {\color{white}\textbf{$\texttt{E}$}};
}
\newcommand{\origintile}{
	\clip (-0.5,-0.5) rectangle (+0.5,+0.5);
	\draw[black, fill=white] (-0.5,0.5)--(0,0)--(-0.5,-0.5)--cycle;
	\draw[black, fill=black!50] (-0.5,-0.5)--(0,0)--(+0.5,-0.5)--cycle;
	\draw[black, fill=white] (0.5,-0.5)--(0,0)--(+0.5,+0.5)--cycle;
	\draw[black, fill=black!15] (0.5,0.5)--(0,0)--(-0.5,+0.5)--cycle;
	\node at (0,-0.35) {\color{white}\textbf{$\texttt{O}$}};
	\node at (0,0.35) {\textbf{$(q_0,\sqcup)$}};
}
\newcommand{\blanktile}{
	\clip (-0.5,-0.5) rectangle (+0.5,+0.5);
	\draw[black, fill=white] (-0.5,0.5)--(0,0)--(-0.5,-0.5)--cycle;
	\draw[black, fill=black!50] (-0.5,-0.5)--(0,0)--(+0.5,-0.5)--cycle;
	\draw[black, fill=black!50] (0.5,-0.5)--(0,0)--(+0.5,+0.5)--cycle;
	\draw[black, fill=white] (0.5,0.5)--(0,0)--(-0.5,+0.5)--cycle;
	\node at (0.35,0) {\color{white}\textbf{$\texttt{E}$}};
	\node at (0,-0.35) {\color{white}\textbf{$\texttt{E}$}};
	\node at (0,0.35) {\textbf{$\sqcup$}};
}
\newcommand{\tiletransmit}[1]{
	\clip (-0.5,-0.5) rectangle (+0.5,+0.5);
	\draw[black, fill=white] (-0.5,0.5)--(0,0)--(-0.5,-0.5)--cycle;
	\draw[black, fill=white] (-0.5,-0.5)--(0,0)--(+0.5,-0.5)--cycle;
	\draw[black, fill=white] (0.5,-0.5)--(0,0)--(+0.5,+0.5)--cycle;
	\draw[black, fill=white] (0.5,0.5)--(0,0)--(-0.5,+0.5)--cycle;
	\node at (0,0.35) {\textbf{$#1$}};
	\node at (0,-0.35) {\textbf{$#1$}};
}
\newcommand{\tilestateup}[4]{
	\clip (-0.5,-0.5) rectangle (+0.5,+0.5);
	\draw[black, fill=white] (-0.5,0.5)--(0,0)--(-0.5,-0.5)--cycle;
	\draw[black, fill=black!15] (-0.5,-0.5)--(0,0)--(+0.5,-0.5)--cycle;
	\draw[black, fill=white] (0.5,-0.5)--(0,0)--(+0.5,+0.5)--cycle;
	\draw[black, fill=black!15] (0.5,0.5)--(0,0)--(-0.5,+0.5)--cycle;
	\node at (0,0.35) {\textbf{$(#3,#4)$}};
	\node at (0,-0.35) {\textbf{$(#1,#2)$}};
}
\newcommand{\tilestateupconditional}[5]{
	\clip (-0.5,-0.5) rectangle (+0.5,+0.5);
	\draw[black, fill=white] (-0.5,0.5)--(0,0)--(-0.5,-0.5)--cycle;
	\draw[black, fill=black!15] (-0.5,-0.5)--(0,0)--(+0.5,-0.5)--cycle;
	\draw[black, fill=white] (0.5,-0.5)--(0,0)--(+0.5,+0.5)--cycle;
	\draw[black, fill=black!15] (0.5,0.5)--(0,0)--(-0.5,+0.5)--cycle;
	\node at (0,0.35) {\textbf{{\small $(#3,#4)$}}};
	\node at (0,-0.35) {\textbf{{\small $(#1,#2)$}}};
	\draw[black, fill=white] (-0.15,-0.15)rectangle(0.15,0.15);
	\node at (0,0) {$#5$};
}
\newcommand{\tilestateleft}[4]{
	\clip (-0.5,-0.5) rectangle (+0.5,+0.5);
	\draw[black, fill=black!15] (-0.5,0.5)--(0,0)--(-0.5,-0.5)--cycle;
	\draw[black, fill=black!15] (-0.5,-0.5)--(0,0)--(+0.5,-0.5)--cycle;
	\draw[black, fill=white] (0.5,-0.5)--(0,0)--(+0.5,+0.5)--cycle;
	\draw[black, fill=white] (0.5,0.5)--(0,0)--(-0.5,+0.5)--cycle;
	\node at (0,0.35) {\textbf{$#4$}};
	\node at (-0.35,0) {\textbf{$\stackrel{#3}{\leftarrow}$}};
	\node at (0,-0.35) {\textbf{$(#1,#2)$}};	
}
\newcommand{\tilestateright}[4]{
	\clip (-0.5,-0.5) rectangle (+0.5,+0.5);
	\draw[black, fill=white] (-0.5,0.5)--(0,0)--(-0.5,-0.5)--cycle;
	\draw[black, fill=black!15] (-0.5,-0.5)--(0,0)--(+0.5,-0.5)--cycle;
	\draw[black, fill=black!15] (0.5,-0.5)--(0,0)--(+0.5,+0.5)--cycle;
	\draw[black, fill=white] (0.5,0.5)--(0,0)--(-0.5,+0.5)--cycle;
	\node at (0,0.35) {\textbf{$#4$}};
	\node at (0.35,0) {\textbf{$\stackrel{#3}{\rightarrow}$}};
	\node at (0,-0.35) {\textbf{$(#1,#2)$}};
}
\newcommand{\tilestateincomingleft}[2]{
	\clip (-0.5,-0.5) rectangle (+0.5,+0.5);
	\draw[black, fill=black!15] (-0.5,0.5)--(0,0)--(-0.5,-0.5)--cycle;
	\draw[black, fill=white] (-0.5,-0.5)--(0,0)--(+0.5,-0.5)--cycle;
	\draw[black, fill=white] (0.5,-0.5)--(0,0)--(+0.5,+0.5)--cycle;
	\draw[black, fill=black!15] (0.5,0.5)--(0,0)--(-0.5,+0.5)--cycle;
	\node at (0,0.35) {\textbf{$(#1,#2)$}};
	\node at (-0.35,0) {\textbf{$\stackrel{#1}{\rightarrow}$}};
	\node at (0,-0.35) {\textbf{$#2$}};
}
\newcommand{\tilestateincomingright}[2]{
	\clip (-0.5,-0.5) rectangle (+0.5,+0.5);
	\draw[black, fill=white] (-0.5,0.5)--(0,0)--(-0.5,-0.5)--cycle;
	\draw[black, fill=white] (-0.5,-0.5)--(0,0)--(+0.5,-0.5)--cycle;
	\draw[black, fill=black!15] (0.5,-0.5)--(0,0)--(+0.5,+0.5)--cycle;
	\draw[black, fill=black!15] (0.5,0.5)--(0,0)--(-0.5,+0.5)--cycle;
	\node at (0,0.35) {\textbf{$(#1,#2)$}};
	\node at (0.35,0) {\textbf{$\stackrel{#1}{\leftarrow}$}};
	\node at (0,-0.35) {\textbf{$#2$}};
}
\newcommand{\tilecommandleft}[1]{
	\clip (-0.5,-0.5) rectangle (+0.5,+0.5);
	\draw[black, fill=black!80] (-0.5,0.5)--(0,0)--(-0.5,-0.5)--cycle;
	\draw[black, fill=black!50] (-0.5,-0.5)--(0,0)--(+0.5,-0.5)--cycle;
	\draw[black, fill=white] (0.5,-0.5)--(0,0)--(+0.5,+0.5)--cycle;
	\draw[black, fill=black!50] (0.5,0.5)--(0,0)--(-0.5,+0.5)--cycle;
	\node at (0,0.35) {\color{white}\textbf{$\texttt{B}$}};
	\node at (0,-0.35) {\color{white}\textbf{$\texttt{B}$}};
	\draw[black, fill=white] (-0.25,-0.25)rectangle(0.25,0.25);
	\node at (0,0) {\includegraphics[scale=0.2]{octopus_gs.png}};
    \node at (0.35,0) {$#1$};
}
\newcommand{\tilecommandright}[1]{
	\clip (-0.5,-0.5) rectangle (+0.5,+0.5);
	\draw[black, fill=white] (-0.5,0.5)--(0,0)--(-0.5,-0.5)--cycle;
	\draw[black, fill=black!15] (-0.5,-0.5)--(0,0)--(+0.5,-0.5)--cycle;
	\draw[black, fill=white] (0.5,-0.5)--(0,0)--(+0.5,+0.5)--cycle;
	\draw[black, fill=black!15] (0.5,0.5)--(0,0)--(-0.5,+0.5)--cycle;
	\node at (0,0.35) {\textbf{$(q_{#1},\sqcup)$}};
	\node at (0,-0.35) {\textbf{$(q'_{#1},\sqcup)$}};
    \node at (-0.35,0) {\textbf{${#1}$}};
}
\newtheorem{theorem}{Theorem}[section]
\newtheorem{lemma}[theorem]{Lemma}
\newtheorem{proposition}[theorem]{Proposition}
\newtheorem{corollary}[theorem]{Corollary}
\newtheorem{definition}[theorem]{Definition}
\theoremstyle{remark}
\newtheorem{remark}[theorem]{Remark}
\newtheorem{question}[theorem]{Question}
\newcommand{\xConfig}[1]{%
	\begin{tikzpicture}[
		baseline=-\the\dimexpr\fontdimen22\textfont2\relax,ampersand replacement=\&]
		\matrix[
		matrix of math nodes,
		nodes={
			minimum size=1.4ex,text width=1.4ex,
			text height=1.4ex,inner sep=3pt,draw={gray!20},anchor=center
		}, row sep=1pt,column sep=1pt
		] (config) {#1};
		\node[draw,rectangle,help lines,gray!50, dashed,fit=(config),inner sep=-1pt] {};
	\end{tikzpicture}
}
\title{Soficity of free extensions of effective subshifts}
\author{
	Sebasti\'an Barbieri, Mathieu Sablik and Ville Salo
}
\newcommand{\Addresses}{{
		\bigskip

		\hskip-\parindent   S.~Barbieri, \textsc{Universidad de Santiago de Chile.}\par\nopagebreak
		\textit{E-mail address}: \texttt{sebastian.barbieri@usach.cl}
		
		\medskip
		
		\hskip-\parindent   M.~Sablik, \textsc{Universit\'e Paul Sabatier.}\par\nopagebreak
		\textit{E-mail address}: \texttt{mathieu.sablik@math.univ-toulouse.fr}
		
		\medskip
		
		\hskip-\parindent   V.~Salo, \textsc{University of Turku.}\par\nopagebreak
		\textit{E-mail address}: \texttt{vosalo@utu.fi}
}}
\begin{document}
	\large

\begin{abstract}
	  Let $G$ be a group and $H\leqslant G$ a subgroup. The free extension of an $H$-subshift $X$ to $G$ is the $G$-subshift $\widetilde{X}$ whose configurations are those for which the restriction to every coset of $H$ is a configuration from $X$. We study the case of $G = H \times K$ for infinite and finitely generated groups $H$ and $K$: on the one hand we show that if $K$ is nonamenable and $H$ has decidable word problem, then the free extension to $G$ of any $H$-subshift which is effectively closed is a sofic $G$-subshift. On the other hand we prove that if both $H$ and $K$ are amenable, there are always $H$-subshifts which are effectively closed by patterns whose free extension to $G$ is non-sofic. We also present a few applications in the form of a new simulation theorem and a new class of groups which admit strongly aperiodic SFTs.

	\medskip

	\noindent
	\emph{Keywords:} symbolic dynamics, effectively closed action, free extension, simulation, non-amenable group, subshift of finite type.

	\smallskip
	
	\noindent
	\emph{MSC2020:} \textit{Primary:}
	37B10, 
	\textit{Secondary:}
	37B05,  
	20F10.  
	
\end{abstract}

\maketitle


\section{Introduction}

Let $X$ be an effectively closed $\ZZ$-subshift, that is, a set of bi-infinite words which can be described by a recursively enumerable set of forbidden words. Consider the \define{trivial extension} of $X$ to $\ZZ^2$, that is, the $\ZZ^2$-subshift whose rows are elements of $X$ and which is constant on every column. The simulation theorem proven independently by Aubrun and Sablik~\cite{AubrunSablik2010} and Durand, Romaschenko and Shen~\cite{DurandRomashchenkoShen2010} shows that this trivial extension is a sofic $\ZZ^2$-subshift: it is the topological factor of a $\ZZ^2$-subshift of finite type (SFT).

If we look at the class of $\ZZ^2$-subshifts whose restriction to $\ZZ \times \{0\}$ coincides with a fixed $\ZZ$-subshift $X$, then the trivial extension is the most rigid element of this class: the choice of configuration from $X$ completely determines the $\ZZ^2$-configuration in the trivial extension. The least rigid member of this class is the \define{free extension}, where each coset $\ZZ\times \{n\}$ holds an arbitrary configuration from $X$. While the trivial extension is very useful in preserving some dynamical properties from $X$, others, such as the topological entropy, are preserved by free extensions~\cite{Barbieri2021_GGD,raymond2023shifts}.

It turns out that in general, the free extension of an effectively closed $\ZZ$-subshift is not sofic and thus the analogue of the simulation result for free extensions does not hold when passing from $\ZZ$ to $\ZZ^2$. In fact, it is an open question of Jeandel whether there exists a non-sofic effectively closed $\ZZ$-subshift whose free extension to $\ZZ^2$ is sofic. It is interesting to ask whether some analogue can still hold if we replace $\ZZ^2$ by a different algebraic structure.

The goal of this article is to study simulation results for free extensions in the larger context of finitely generated groups. More precisely, consider a group $G$ and $H \leqslant G$ a subgroup. The free extension to $G$ of an $H$-subshift $X$ is the $G$-subshift $\widetilde{X}$ whose configurations are those for which the restriction to every coset of $H$ is a configuration from $X$. Notice that if $X$ is a subshift defined by a forbidden set of patterns $\mathcal{F}$, then $\widetilde{X}$ is defined by exactly the same set of forbidden patterns.

All of our results are for the case where $G = H \times K$ is the direct product of two groups $H$ and $K$. In this case we identify $H$ with the subgroup $H \times \{1_K\} \leqslant G$, and thus the free extension of some $H$-subshift $X$ is the $G$-subshift $\widetilde{X}$ whose configurations satisfy that for each $k \in K$ their restriction to $H \times \{k\}$ lies in $X$.

Our first result generalizes our previous observation that no simulation result can hold for free extensions from $H =\ZZ$ to $G = \ZZ^2$ to the context where both $H$ and $G = H \times K$ are finitely generated amenable groups.

\begin{theorem}\label{thm:amenable_is_nope}
	Let $H,K$ be two infinite and finitely generated amenable groups. There exists an $H$-subshift which is effectively closed by patterns and whose free extension to $G=H \times K$ is not sofic.
\end{theorem}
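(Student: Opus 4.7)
The plan is to leverage the preservation of topological entropy under free extensions in the amenable setting, and combine this with a diagonalization argument ruling out every sofic presentation of $\widetilde{X}$. Since $H$ and $K$ are finitely generated amenable, so is $G = H \times K$, and choosing F{\o}lner sequences $(F_n)$ in $H$ and $(E_n)$ in $K$ yields a F{\o}lner sequence $(F_n \times E_n)$ in $G$ along which entropies can be computed.

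First I would establish, using the row-independence of free extensions, that the number of patterns on $F_n \times E_n$ in $\widetilde{X}$ equals the number of patterns on $F_n$ in $X$ raised to the power $|E_n|$, and hence $h(\widetilde{X}) = h(X)$; this is already observed in \cite{Barbieri2021_GGD}. Next I would invoke the fact that sofic $G$-subshifts on amenable groups have uniformly $\Pi_1$-computable topological entropy from their SFT presentations, so that soficity of $\widetilde{X}$ would force $h(X)$ into this class. Finally I would construct $X$ by diagonalization: enumerate candidate pairs $(Y_i, \pi_i)$ consisting of an SFT $Y_i$ and a block map $\pi_i$ on $G$; at stage $i$, insert finitely many forbidden patterns into $X$ so as to witness $\pi_i(Y_i) \neq \widetilde{X}$ through a concrete $G$-pattern. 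If this is carried out computably then the resulting $X$ remains effectively closed by patterns.

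The hard part will be carrying out the diagonalization effectively while keeping $X$ nonempty and respecting the shift-invariance. Entropy alone is insufficient, since on amenable groups both sofic and effectively closed subshifts realize all non-negative upper-semicomputable reals as entropies; the distinguishing witnesses must therefore exploit a finer invariant such as the extreme row-independence of $\widetilde{X}$ --- every choice of independent $X$-configurations across distinct cosets $H \times \{k\}$ yields an element of $\widetilde{X}$, a constraint that a $G$-SFT $Y_i$ with a fixed interaction window cannot fully mimic once the forbidden patterns of $X$ are designed to exploit the finite reach of $Y_i$. Implementing this idea computably and uniformly across all pairs $(Y_i,\pi_i)$ simultaneously, while preserving nontriviality of $X$ and keeping the forbidden set recursively enumerable, is the principal technical obstacle.
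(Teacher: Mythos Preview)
Your proposal is not a proof: it is an outline whose central mechanism you yourself flag as unresolved (``the principal technical obstacle''). The entropy idea is correctly discarded, and what remains is a diagonalization against all candidate presentations $(Y_i,\pi_i)$ of $\widetilde{X}$. But this diagonalization is circular as stated. You are building $X$ stage by stage, so $\widetilde{X}$ is not defined until the process terminates; yet at stage $i$ you need to decide which finitely many $H$-patterns to forbid so that $\pi_i(Y_i)\neq\widetilde{X}$, a condition that depends on the final $X$. Forbidding a pattern shrinks $\widetilde{X}$, which may push it \emph{toward} $\pi_i(Y_i)$ rather than away, and may also retroactively undo the work of earlier stages. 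You would need a concrete, stable witness at each stage (a $G$-pattern that lies in $\pi_i(Y_i)$ but is forced out of $\widetilde{X}$ by a \emph{single} new $H$-constraint, or vice versa), uniform in $i$, computable, and compatible with nontriviality; none of this is supplied, and it is not clear such witnesses exist in general.

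The paper takes a completely different and much more direct route: it writes down an explicit $H$-subshift (the reflection shift when $H$ has property~(S), the ball mimic shift when $H$ has decidable word problem) and shows by a pigeonhole argument that its free extension cannot be sofic. The argument is exactly the counting idea you allude to in your last paragraph but never implement: for any putative nearest-neighbor SFT cover $Z$ over alphabet $B$, one chooses F{\o}lner sets $T\Subset H$, $U\Subset K$ with $2^{|T||U|}>|B|^{|\partial(T\times U)|}$, finds two $\{0,1\}$-patterns on $T\times U$ whose $Z$-preimages agree on the boundary, and then swaps one region into the other to produce a configuration violating the mirror/mimic constraint. The dichotomy between property~(S) and decidable word problem is closed by showing (via ICC quotients) that every finitely generated group without property~(S) is virtually nilpotent. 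No diagonalization, no enumeration of presentations, no computability of entropy is needed; the single explicit example defeats all SFT covers at once.
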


A subshift on a finitely generated group is ``effectively closed by patterns'' if there exists a recursively enumerable list of pattern codings (where elements of the groups are encoded as words on the generators) which describes it. It is a natural generalization of the natural notion of effectively closed subshift for $\ZZ$, see~\cite{ABS2017}. The proof of~\Cref{thm:amenable_is_nope} is separated on two cases, one where $H$ has decidable word problem, and one where $H$ satisfies a technical property. We finally show that these two cases cover all groups. 

The main result of this article is that as soon as the first group has decidable word problem and we replace the second group by a non-amenable group, then the free extension of any effectively closed subshift is a sofic subshift. 

\begin{theorem}\label{thm:main_theorem}
	Let $H$ be a finitely generated group with decidable word problem and $N$ be a non-amenable group. The free extension of every effectively closed $H$-subshift to $G=H \times N$ is sofic.
\end{theorem}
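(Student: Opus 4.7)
The strategy is to realize $\widetilde{X}$ as the letter-to-letter factor of a $G$-SFT $Y$ built from three superimposed layers: a \emph{visible layer} valued in the alphabet $A$ of $X$ whose projection will produce $\widetilde{X}$; a \emph{skeleton layer} supported essentially on the $N$-direction, carrying a paradoxical tree-like structure forced by non-amenability; and a \emph{computation layer} that, along each $H$-coset, runs Turing machines enumerating the forbidden patterns of $X$ and flagging any occurrence in the visible layer.

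For the skeleton I would invoke or construct a non-empty $N$-SFT whose configurations contain an unbounded rooted tree in which each node has several children, so that every point of $N$ lies in arbitrarily large tree-domains. Such skeletons exist on any finitely generated non-amenable group via paradoxical decomposition or standard ``Ponzi scheme'' arguments. The role of this skeleton is to grant each point of $N$, equivalently each $H$-coset of $G$, access to arbitrarily large domains in $N$ usable as computational workspace. This is precisely the feature that fails for amenable $N$ (cf.~\Cref{thm:amenable_is_nope}), where the existence of Fol\H{n}er sets obstructs arbitrarily branching SFT skeletons.

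On top of this skeleton I would attach a Turing-machine layer that uses the branching tree as workspace to enumerate forbidden patterns of $X$ and check that none occur in the visible layer along the associated $H$-coset. Because $X$ is effectively closed and $H$ has decidable word problem, each forbidden pattern admits a concrete representation as a finite list of (generator-word, symbol) pairs, and the TM can consistently navigate the visible layer along $H$: the decidability of the word problem is what lets the machine correctly recognise when two words on the generators represent the same element of $H$, which is required to read any $x \in \widetilde{X}$ coherently. A local SFT rule couples the TM to the visible layer so that the detection of a forbidden pattern forces an error state and rules out the configuration.

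The factor map $\pi$ is simply the projection to the visible layer. Soundness, $\pi(Y) \subseteq \widetilde{X}$, is immediate from the verification mechanism. The real work is completeness, $\widetilde{X} \subseteq \pi(Y)$: given $x \in \widetilde{X}$ one must fit a valid skeleton and a non-crashing TM computation on top of $x$. The TM computations never produce an error because by hypothesis every $H$-coset restriction of $x$ lies in $X$, while the skeleton is chosen on $N$ independently of $x$. The principal obstacle, and the true heart of the argument, is engineering the skeleton together with the TM encoding so that genuinely unbounded computation is available to every coset \emph{simultaneously}, without introducing hidden global constraints that would clash with an arbitrary choice of $x$; this is where non-amenability of $N$ is used essentially, and it is precisely the step whose analogue fails in the amenable setting.
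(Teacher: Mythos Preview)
Your outline matches the paper's strategy: a visible layer, a paradoxical-tree skeleton in $N$, and a Turing-machine layer that verifies the visible layer along each $H$-coset, with the factor map being projection to the visible layer. So the architecture is right.

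That said, the proposal stops exactly where the genuine work begins, and the paper's solution to the parts you flag as ``the true heart of the argument'' involves mechanisms your sketch does not anticipate. Two in particular are worth naming. First, your TM must read symbols at \emph{arbitrary} positions of $H$, not just at its own location; the paper handles this with a separate \emph{tentacle layer}: from the leftmost cell of each computation tape a linked-list ``tentacle'' grows step by step through $H$ following a prescribed generator word, carries back the symbol it finds at its tip, and is then deleted. The decidable word problem is used not merely to recognise equalities, as you suggest, but to replace every word in each enumerated pattern coding by a \emph{geodesic}, which is what prevents a tentacle from colliding with itself. Second, to lay out computation tapes at all when $H$ is an arbitrary group, the paper first uses Seward's theorem to fix a translation-like $\ZZ$-action on $H$ (a \emph{directions layer}), and then a \emph{branching layer} that nondeterministically routes each tape into one child of the binary tree at every step; completeness is obtained by exhibiting an explicit assignment of branching bits so that tapes and tentacles rooted at distinct $h\in H$ eventually live in distinct branches and never collide. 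Your sketch is consistent with all of this, but none of these devices is forced by the outline you wrote, and they are where the proof actually lives.
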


We remark that in this result we do not require the group $N$ to be finitely generated, or even countable. For instance, we have that the free extension of every effectively closed $\ZZ$-subshift to $\ZZ \times \operatorname{SL}_2(\RR)$ is a sofic subshift.

The proof of~\Cref{thm:main_theorem} is done in two steps. First, we construct in an arbitrary non-amenable group $N$ a subshift of finite type in which every configuration encodes a collection of pairwise disjoint infinite binary trees which are rooted in every element of the group. This is done through paradoxical decompositions and is a natural generalization of the technique we introduced in~\cite{BaSaSa_2021}. This allows us to associate every coset of a group $H$ in $H \times N$ with a product with a binary tree $H \times \{0,1\}^*$. The second step of the proof constructs a rooted variant of a subshift of finite type in $H\times \{0,1\}^*$ whose projection to the empty word $H \times \{\epsilon\}$ coincides with the effectively closed subshift on $H$. This construction is the most technical aspect of this paper and is explained with details in~\Cref{sec:mainthm}.

The remainder of the paper is dedicated to exploring the applications of~\Cref{thm:main_theorem}. An immediate consequence is that if $H$ is a finitely generated group with decidable word problem and $N$ is a finitely generated non-amenable group. Then the trivial extension of every effectively closed $H$-subshift to $G=H \times N$ is also sofic (\Cref{cor:simulation_trivial_expansive}).

A second result concerns the simulation of more general effective actions than subshifts. Indeed, the initial breakthrough (which preceeded~\cite{AubrunSablik2010,DurandRomashchenkoShen2010}) in this direction was a result of Hochman~\cite{Hochman2009b}, which showed that for every effectively closed action $\ZZ\curvearrowright X$, that is, every homeomorphism of a zero-dimensional space which can be described in a precise manner through a Turing machine, there exists a $\mathbb{Z}^3$-subshift of finite type which factors onto the trivial extension of $\ZZ\curvearrowright X$ (the $\ZZ^3$-action on $X$ such that $\{0\} \times \ZZ^2$ acts trivially and $\ZZ \times \{(0,0)\}$ acts isomorphically as $\ZZ\curvearrowright X$). This result established an important bridge between computability and higher-dimensional abelian group actions, a remarkable illustration of this connection being the classification of entropies of $\ZZ^d$-subshifts of finite type by Hochman and Meyerovitch~\cite{HochmanMeyerovitch2010}.

A few results by the authors have extended the initial work of Hochman to actions by homeomorphisms of more general finitely generated groups in several ways~\cite{BS2018,Barbieri_2019_DA,BaSaSa_2021}. Here we use~\Cref{thm:main_theorem} to provide the following new simulation theorem.

\begin{theorem}\label{thm:simulation_nonamenable}
Let $H,N$ be infinite and finitely generated groups and $G = N \times H$. Let $H$ have decidable word problem and $N$ be non-amenable. For every effectively closed action $N \curvearrowright X$ there exists a $G$-subshift of finite type which factors onto the trivial extension of $N \curvearrowright X$ to $G$.
\end{theorem}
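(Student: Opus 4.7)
The plan is to reduce \Cref{thm:simulation_nonamenable} to \Cref{thm:main_theorem} by constructing an $H$-subshift that locally encodes the action $N\curvearrowright X$, taking its free extension to $G$, and then imposing orbit coherence as a finite-type constraint along $N$.

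First, using the decidable word problem of $H$ together with the effective closedness of $N\curvearrowright X$, I would build an effectively closed $H$-subshift $Y_0\subseteq A^H$ together with a continuous $H$-invariant surjection $\phi_0\colon Y_0 \to X$, and, for each element $s$ in a fixed finite generating set of $N$, a relation $\tau_s\subseteq A\times A$ such that for $y,y'\in Y_0$ the identity $\phi_0(y) = s\cdot \phi_0(y')$ holds if and only if $(y(h),y'(h))\in\tau_s$ for every $h\in H$. This is the analogue in our setting of Hochman's reduction from effective actions to effective subshifts; variants of such hierarchical self-similar codings appear in \cite{Hochman2009b,BS2018,Barbieri_2019_DA,BaSaSa_2021}, and the decidable word problem of $H$ makes the bookkeeping required for a shift-invariant decoding computable.

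With $Y_0$ in hand, \Cref{thm:main_theorem} ensures that the free extension $\widetilde{Y_0}\subseteq A^G$ is sofic. I then define a $G$-SFT $S\subseteq A^G$ by the nearest-neighbor conditions $(\eta(n,h),\eta(ns,h))\in\tau_s$ for every generator $s$ of $N$ and every $(n,h)\in G$, and set $Y_1 \isdef \widetilde{Y_0}\cap S$; since the intersection of a sofic subshift with an SFT is sofic, $Y_1$ is sofic. Define $\pi\colon Y_1\to X$ by $\pi(\eta)\isdef\phi_0(\eta|_{\{1_N\}\times H})$. It is continuous (from $\phi_0$), $H$-invariant (matching the trivial $H$-action on $X$), and $N$-equivariant, because $S$ inductively forces $\phi_0(\eta|_{\{n\}\times H}) = n^{-1}\cdot\phi_0(\eta|_{\{1_N\}\times H})$ for every $n\in N$; surjectivity follows by choosing, for any $x\in X$, pre-images $y_n\in\phi_0^{-1}(n^{-1}\cdot x)$ and declaring $\eta|_{\{n\}\times H}\isdef y_n$. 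Composing $\pi$ with a factor map from a $G$-SFT onto $Y_1$ yields the SFT factoring onto the trivial extension of $N\curvearrowright X$ required by the theorem.

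The principal technical obstacle is the first step: exhibiting $Y_0$ on which the $N$-action on $X$ is simultaneously visible as a continuous shift-invariant decoding $\phi_0$ and recognizable via local alphabet-level rules $\tau_s$ for each $N$-generator. This calls for a hierarchical self-similar coding on $H$ adapted from Hochman's reduction for $\mathbb{Z}$; the decidable word problem of $H$ is exactly what allows such a coding to be implemented computably, and once it is in place the remainder of the proof is a soft gluing argument.
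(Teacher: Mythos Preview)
Your overall strategy matches the paper's: encode $N\curvearrowright X$ inside an effectively closed $H$-subshift, invoke \Cref{thm:main_theorem} to get soficity of the free extension, then add finite-type constraints along $N$ to synchronize cosets. The paper carries this out concretely by taking the set representation $\operatorname{Rep}(N\curvearrowright X,S)\subseteq (A^S)^{\NN}$, applying the Toeplitz coding of~\cite{BS2018} to get an effectively closed $\ZZ$-subshift $T$ with a shift-invariant decoder, and then embedding $\ZZ$ into $H$ via Seward's translation-like action to produce the $H$-subshift $D[T]$. The synchronizing SFT rule is then simply $\widetilde\pi_s(y(h,n))=\widetilde\pi_{1_N}(y(h,ns^{-1}))$ together with the requirement that the directions component $d$ be constant along $N$.

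The gap in your proposal is the ``only if'' direction in your specification of $\tau_s$. You require that \emph{whenever} $\phi_0(y)=s\cdot\phi_0(y')$, the symbolwise relation $(y(h),y'(h))\in\tau_s$ holds for all $h$. This is far too strong: the fibres of any continuous $H$-invariant map $\phi_0$ from a subshift on an infinite group are large, and two preimages of $s\cdot x'$ and $x'$ will typically differ in ways no pointwise alphabet relation can detect (in the paper's construction, for instance, they may use different directions $d$ or different shifts of the Toeplitz skeleton). Your surjectivity argument relies precisely on this ``only if'' to conclude that \emph{any} independent choice of $y_n\in\phi_0^{-1}(n^{-1}x)$ assembles into a point of $Y_1$, and that fails. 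What is actually needed---and what the paper supplies---is the ``if'' direction together with a specific construction of $Y_0$ admitting a \emph{coherent} family of preimages $(y_n)_{n\in N}$ satisfying all the $\tau_s$ constraints simultaneously. In the paper this coherence comes for free once one fixes a single translation-like action and a single Toeplitz alignment across all $N$-cosets, replacing only the $A^S$-values according to the set representation; your outline does not provide an analogous mechanism, and without it surjectivity is unproven.
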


The proof of this result uses~\Cref{thm:main_theorem} in conjunction with Toeplitz codings of effectively closed sets. As a remarkable consequence of~\Cref{thm:simulation_nonamenable} we show that on every product $H \times N$ of two finitely generated groups with decidable word problem where at least one of them is non-amenable there exists a non-empty subshift of finite type for which the shift action is free. These subshifts are usually called ``strongly aperiodic''.

\begin{theorem}\label{thm:aperiodic}
	Let $N,H$ be infinite and finitely generated groups with decidable word problem and suppose that $N$ is non-amenable. The group $H \times N$ admits a nonempty strongly aperiodic subshift of finite type. 
\end{theorem}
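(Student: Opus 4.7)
The plan is to build two sofic $G$-subshifts, each forbidding periods in one of the two factor directions, and to combine them into an SFT cover that inherits strong aperiodicity. The starting ingredient will be the standard fact that any infinite finitely generated group with decidable word problem admits a non-empty effectively closed strongly aperiodic subshift; applied to $N$ and to $H$ this will produce non-empty effectively closed strongly aperiodic subshifts $X_N$ (an $N$-subshift) and $X_H$ (an $H$-subshift).

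For the first layer I will view $X_N$ as an effectively closed action $N \curvearrowright X_N$ and invoke Theorem~\ref{thm:simulation_nonamenable}, which gives a non-empty $G$-SFT $Y_N$ factoring onto the trivial extension $\widehat{X_N}$ of $X_N$ to $G = H \times N$. For the second layer, since $H$ has decidable word problem and $N$ is non-amenable, Theorem~\ref{thm:main_theorem} yields that the free extension $\widetilde{X_H}$ of $X_H$ to $G$ is sofic, so I will fix a non-empty $G$-SFT $Y_H$ factoring onto $\widetilde{X_H}$. The product $Y := Y_N \times Y_H$, on the product alphabet, will then be a non-empty $G$-SFT that factors onto $\widehat{X_N} \times \widetilde{X_H}$.

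The heart of the argument will be the verification that the factor $\widehat{X_N} \times \widetilde{X_H}$ is strongly aperiodic. Given $(h, n) \cdot (y_1, y_2) = (y_1, y_2)$, the fact that $y_1$ depends only on its $N$-coordinate and that its $N$-trace lies in the strongly aperiodic subshift $X_N$ will force the $G$-stabilizer of $y_1$ to equal $H \times \{1_N\}$, so that $n = 1_N$. Then $(h, 1_N) \cdot y_2 = y_2$ will mean that for every $k \in N$ the row $h' \mapsto y_2(h', k) \in X_H$ is $h$-periodic, and strong aperiodicity of $X_H$ will give $h = 1_H$. Since $G$-equivariant factor maps preserve periods, $Y$ will inherit strong aperiodicity, providing the required non-empty strongly aperiodic $G$-SFT. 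The main subtlety in the plan is the existence step for $X_H$ and $X_N$, which relies on the decidability of the word problem to effectively rule out non-trivial stabilizers; everything else is direct assembly from Theorems~\ref{thm:main_theorem} and~\ref{thm:simulation_nonamenable}.
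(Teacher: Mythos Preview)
Your proposal is correct and follows essentially the same approach as the paper: both arguments take strongly aperiodic effectively closed subshifts on $H$ and on $N$ (from~\cite{ABT2018}), lift the $H$-one to a sofic free extension via Theorem~\ref{thm:main_theorem} and the $N$-one to a sofic trivial extension via Theorem~\ref{thm:simulation_nonamenable}, and then take the product of the SFT covers and check that periods in the $N$-direction are killed by the trivial-extension factor while periods in the $H$-direction are killed by the free-extension factor. The only cosmetic difference is that you first verify strong aperiodicity on the factor $\widehat{X_N}\times\widetilde{X_H}$ and then lift it to the SFT cover, whereas the paper argues directly on the cover by pushing periods through the factor maps; these are equivalent.
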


We remark that very recently a minimal strongly aperiodic SFT in $\ZZ \times F_2$ was constructed in~\cite{AuBiHTB_2022}. While our result covers a much larger class of groups, we do not obtain minimality. 

The paper is organized as follows. In~\Cref{sec:preliminaries} we provide definitions for all the required computability and dynamical notions, as well as present the basic results that we shall use without reference later on. In~\Cref{sec:amenables} we prove~\Cref{thm:amenable_is_nope} by constructing an explicit example in each of the aforementioned cases. Next in~\Cref{sec:trees} we construct the binary tree structure in an arbitrary non-amenable group and use it to reduce the proof of~\Cref{thm:main_theorem} to the existence of an ad-hoc SFT-like structure which we call ``rooted SFT'' (\Cref{def:rootedSFT}). ~\Cref{sec:mainthm} is the core of this article, where we provide the proof of~\Cref{thm:main_theorem} by constructing explicitly the rooted SFT with the desired properties. In~\Cref{sec:aplications} we present the applications mentioned above. Finally, in~\Cref{sec:questions} we present a few questions we were not able to solve.

 \textbf{Acknowledgments}: S. Barbieri was supported by the FONDECYT grants 11200037 and 1240085. M. Sablik was supported by ANR project Difference (ANR-20-CE48-0002) and the project Computability of asymptotic properties of dynamical systems from CIMI Labex (ANR-11-LABX-0040). V. Salo was supported by the Academy of Finland project 2608073211.

\section{Preliminaries}\label{sec:preliminaries}

We denote by $\NN$ the set of non-negative integers and use the notation $A \Subset B$ to denote that $A$ is a finite subset of $B$. For a group $G$, we denote its identity by $1_{G}$, and for a word $w = w_0w_2\dots w_{k-1} \in G^*=\bigcup_{n \in \NN}G^n$, we denote by $\underline{w}$ the element of $G$ which is obtained by multiplying the $w_i$. We denote by $\epsilon$ the empty word.

The \define{word problem} of a group $G$ with respect to $S \Subset G$ is the language \[ \texttt{WP}_{S}(G) = \{ w \in S^* :  \underline{w} = 1_{G}   \}.  \]

Let $S\Subset G$ be a set which generates $G$. We say that $G$ is \define{recursively presented} if $\texttt{WP}_{S}(G)$ is a recursively enumerable language, that is, if there is a Turing machine which on input $w \in S^*$ accepts if and only if $\underline{w} = 1_{G}$. A group is said to have \define{decidable word problem} if $\texttt{WP}_{S}(G)$ is decidable. If $G$ is finitely generated the notions above do not depend upon the choice of $S$, as long as it generates $G$.

Let $G \curvearrowright X$ and $G \curvearrowright Y$ be (left) actions by homeomorphisms of a group $G$ on two compact metrizable spaces $X$ and $Y$. A map $\phi\colon X \to Y$ is called a \define{topological morphism} if it is continuous and $G$-equivariant, that is, if $\phi(gx) = g\phi(x)$ for every $g \in G$ and $x \in X$. A topological morphism which is surjective is called a \define{topological factor map}. If there exists a topological factor map $\phi\colon X \to Y$ we say that $G \curvearrowright Y$ is a \define{topological factor} of $G \curvearrowright X$, and that $G \curvearrowright X$ is a \define{topological extension} of $G \curvearrowright Y$. If the topological morphism is a bijection, we say that $G \curvearrowright X$ is \define{topologically conjugate} to $G\curvearrowright Y$. 


Given a short exact sequence $1 \to K \to G \to H \to 1$ of groups and an action $H \curvearrowright X$, the \define{trivial extension} to $G$ of $H \curvearrowright X$ is the action $G \curvearrowright X$ so that the $K$-subaction of $G \curvearrowright X$ is trivial and the quotient action $G/K \curvearrowright X$ is isomorphic to $H \curvearrowright X$. In the case where $G = H \times K$, the trivial extension to $G$ of $H \curvearrowright X$ is the action $G \curvearrowright X$ where $\{1_H\}\times K$ acts trivially and $H\times \{1_K\} \curvearrowright X$ is isomorphic to $H \curvearrowright X$ with the canonical identification between $H \times \{1_K\}$ and $H$.

\subsection{Effectively closed actions}
Let $A$ be a finite set, for instance $A = \{\symb{0},\symb{1}\}$, and endow $A^{\NN}$ with the product of the discrete topology on each coordinate (the prodiscrete topology). For a word $w = w_0w_1\dots w_{n-1} \in A^*$ we denote by $[w]$ the cylinder set of all $x \in A^{\NN}$ such that $x_i = w_i$ for $0 \leq i \leq n-1$. We say that a set $X \subset A^{\NN}$ is \define{effectively closed} if there exists a Turing machine which halts on input $w \in A^*$ if and only if $[w] \cap X = \varnothing$. Intuitively, a set $X$ is effectively closed if there is an algorithm which provides approximations of the complement of $X$ as a union of cylinders. We remark that in terms of effective descriptive set theory, effectively closed sets are called $\Pi_1^0$ sets. 


Consider a finitely generated group $G$ with a finite generating set $S$ which contains the identity. We shall provide a computability notion for actions of $G$ on effectively closed sets. Given $X \subset A^\NN$, consider the finite set $B =A^S$. Notice that elements $y\in B^{\NN}$ can be identified as maps $y\colon \NN \times S \to A$. For $s\in S$ and $y \in B^{\NN}$, we define the projection to coordinate $s$ of $y$ as the map $\pi_s y \in A^{\NN}$ given by \[  (\pi_s y) (n) = y(n)(s) \mbox{ for every } n \in \NN.\]

Consider the set $\operatorname{Rep}(G \curvearrowright X, S) \subset (A^S)^{\NN}$ given by \[ \operatorname{Rep}(G \curvearrowright X, S) = \{ y \in B^\NN : \pi_{1_{G}}y \in X, \mbox{ and for every } s \in S, \pi_s y = s\cdot  (\pi_{1_{G}}y) \}.  \]

We call $\operatorname{Rep}(G \curvearrowright X, S)$ the \define{set representation} of $G \curvearrowright X$ determined by $S$.

\begin{definition}\label{def:ECaction}
	Let $G$ be a finitely generated group and $X\subset A^{G}$. We say an action $G \curvearrowright X$ is effectively closed, if there exists a finite generating set $S\subset G$ such that the set representation $\operatorname{Rep}(G \curvearrowright X, S)$ is an effectively closed subset of $(A^S)^{\NN}$.
\end{definition}

We remark that the set representation $\operatorname{Rep}(G \curvearrowright X, S)$ can be identified with the set of maps $y \colon S \to X$ which satisfy the property that $y(s) = s\cdot y(1_G)$, and thus represents the graph of the group action on $X$ restricted to the generators. Thus in this definition we are saying that an action $G \curvearrowright X$ is effectively closed whenever for some set of generators, its graph is an effectively closed set.

To readers who are familiar with computable functions on Cantor spaces, an effectively closed action can equivalently be defined as an action of a group on an effectively closed set where for every generator $s$ the map $x \mapsto sx$ is a computable function. In particular, this implies that the notion of effectively closed action can be alternatively defined as a property that must hold for all sets of generators. For more on effectively closed actions we suggest to read Section 2.2 of~\cite{BaSaSa_2021}.

\subsection{Shift spaces}

Let $\ag$ be a finite set and $G$ be a group. The set $\ag^G = \{ x\colon G \to \ag\}$ equipped with the left \define{shift} action $G \curvearrowright \ag^{G}$ by left multiplication given by 
\[ gx(h) \isdef x(g^{-1}h) \qquad \mbox{  for every } g,h \in G 
\mbox{ and } x \in \ag^G, \]
 is the \define{full $G$-shift}. The elements $a \in \ag$ and $x \in \ag^G$ are called \define{symbols} and \define{configurations} respectively. Given $F\Subset G$, a \define{pattern} with support $F$ is an element $p \in \ag^F$. We denote the cylinder generated by $p$ by $[p] = \{ x \in A^{G} : x|_F = p \}$ and note that the cylinders are a clopen base for the prodiscrete topology on $A^{G}$. 
 
\begin{definition}
	A subset $X \subset \ag^G$ is a \define{$G$-subshift} if and only if it is $G$-invariant and closed in the prodiscrete topology. 
\end{definition}

If the context is clear, we drop the $G$ from the notation and speak plainly of a subshift.  Equivalently, $X$ is a subshift if and only if there exists a set of forbidden patterns $\FF$ such that \[X=X_\FF \isdef \{ x \in A^{G} : gx\notin [p] \mbox{ for every } g \in G, p \in \FF  \}.\]

\begin{definition}
    Let $G$ be a group and $H \leqslant G$ a subgroup. For an $H$-subshift $X\subset A^{H}$ we define its \define{free extension} to $G$ as the subshift $\widetilde{X}\subset A^{G}$ given by \[ \widetilde{X} = \{ \widetilde{x}\in A^{G} : \mbox{ for every } g \in G, \{x(gh)\}_{h \in H} \in X \}. \]
\end{definition}

We remark that if $X\subset A^{H}$ is given by a set of forbidden patterns $\mathcal{F}$, then its free extension $\widetilde{X}$ is the $G$-subshift given by the same set of forbidden patterns. Notice that every configuration on $\widetilde{X}$ can be understood as bundles of configurations of $X$ indexed by the cosets of $H$.

An action $G \curvearrowright X$ on a compact metrizable space is expansive if there is a constant $C>0$ so that whenever $d(gx,gy)< C$ for every $g \in G$ then $x=y$. It is a well known elementary fact that an action $G \curvearrowright X$ on a closed subset $X \subset \{\symb{0},\symb{1}\}^{\NN}$ is topologically conjugate to a $G$-subshift if and only if the action is expansive.

\begin{definition}
	A subshift $X \subset \ag^G$ is a \define{subshift of finite type (SFT)} if there exists a finite set of forbidden patterns $\FF$ such that $X = X_{\FF}$.
\end{definition}

\begin{definition}
	A subshift $X \subset \ag^G$ is a \define{sofic subshift} if it is the topological factor of an SFT.
\end{definition}

It will be useful to describe topological morphisms between subshifts in an explicit way. The following classical theorem provides said description. A modern proof for actions of countable groups may be found in~\cite[Theorem 1.8.1]{ceccherini-SilbersteinC09}.

\begin{theorem}[Curtis-Lyndon-Hedlund~\cite{Hedlund1969}]\label{thm:curtis_lyndon_hedlund}
	Let $G$ be a countable group and $X \subset A^G, Y \subset B^G$ be two $G$-subshifts. A map $\phi\colon X \to Y$ is a topological morphism if and only if there exists a finite set $F \Subset G$ and $\Phi\colon A^F \to B$ such that for every $x \in X$ and $g \in G$ then $(\phi(x))(g) = \Phi((g^{-1}x)|_{F})$.
\end{theorem}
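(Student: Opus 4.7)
The plan is to establish the two directions of the equivalence separately. For the ``if'' direction, given a local rule $\Phi\colon A^F \to B$ defining $\phi$ via $(\phi(x))(g) = \Phi((g^{-1}x)|_F)$, I would verify continuity by noting that the value of $\phi(x)$ at any fixed coordinate $g$ depends only on the finitely many coordinates $x|_{gF}$, so preimages of cylinders under $\phi$ are finite unions of cylinders. Equivariance follows by direct computation: for $h \in G$,
$$(\phi(hx))(g) = \Phi((g^{-1}hx)|_F) = \Phi(((h^{-1}g)^{-1}x)|_F) = (\phi(x))(h^{-1}g) = (h\phi(x))(g).$$

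For the ``only if'' direction, I would first reduce the problem to finding a local rule at the identity. Define $\phi_0\colon X \to B$ by $\phi_0(x) = (\phi(x))(1_G)$. Combining equivariance with the identity $(g^{-1}y)(1_G) = y(g)$, one obtains
$$(\phi(x))(g) = (g^{-1}\phi(x))(1_G) = (\phi(g^{-1}x))(1_G) = \phi_0(g^{-1}x),$$
so the whole map $\phi$ is determined by $\phi_0$. It therefore suffices to produce a finite $F \Subset G$ and $\Phi\colon A^F \to B$ such that $\phi_0(x) = \Phi(x|_F)$ for every $x \in X$.

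To extract this finite dependency I would use compactness of $X$ and discreteness of $B$. Since $\phi_0$ is continuous into the finite discrete space $B$, each fibre $\phi_0^{-1}(b)$ is clopen in $X$; since cylinders form a basis of clopen sets in $A^G$, each $\phi_0^{-1}(b)$ is a union of cylinders intersected with $X$, and by compactness finitely many suffice. Letting $F$ be the union of the supports of the finitely many patterns appearing across all $b \in B$, the value $\phi_0(x)$ depends only on $x|_F$, so setting $\Phi(x|_F) = \phi_0(x)$ (and extending arbitrarily to patterns not realized in $X$) yields the desired local rule. The only real subtlety in the argument is this finite-support extraction, which is a routine application of compactness; the rest consists of unpacking the definitions and checking equivariance at the identity.
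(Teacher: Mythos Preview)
Your argument is correct and is precisely the standard proof of the Curtis--Lyndon--Hedlund theorem. Note, however, that the paper does not actually supply its own proof of this statement: it merely states the theorem and refers the reader to~\cite[Theorem~1.8.1]{ceccherini-SilbersteinC09} for a modern proof in the setting of countable group actions. There is therefore nothing to compare against in the paper itself; your proposal fills in exactly the argument one finds in that reference (and in essentially every symbolic dynamics text), with the equivariance reduction to the identity coordinate and the compactness extraction of a finite window being the two key steps.
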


In the above definition, a map $\phi\colon X \to Y$ for which there is $\Phi\colon A \to B$  so that $(\phi(x))(g) = \Phi(x(g))$ for every $g \in G$ is called a \define{$1$-block map}. If $Y$ is the topological factor of an SFT $X$, it is always possible to construct an SFT $Z$ which is topologically conjugate to $X$ and a $1$-block topological factor map $\widetilde{\phi}\colon Z \to Y$. Furthermore, if $G$ is generated by a finite set $S$, one can ask that $Z$ is \define{nearest neighbor} with respect to $S$, that is, it is described by a set of forbidden patterns whose supports are of the form $\{1_{G},s\}$ where $s$ is a generator of $G$. A proof of this elementary fact can be found on~\cite[Proposition 1.7]{tesis}.

\subsection{Effectively closed subshifts}

It is often useful to give an explicit notion of effectively closed in order to be able to work with the space $A^{G}$ instead of $A^{\NN}$. Let $G$ be a finitely generated group, $S$ a finite set of generators for $G$ and $\ag$ an alphabet. Recall that for $w \in S^*$, we denote by $\underline{w}$ its corresponding element on $G$. For any $W \Subset S^*$, a map $c\colon W \to \ag$ is called a \define{pattern coding}. The \define{cylinder} defined by a pattern coding $c$ is given by
	\[ [c] = \bigcap_{w \in W} \{x \in A^G: x(\underline{w}) = c(w)  \} .  \]

A pattern coding can be thought of as a pattern on the free monoid $S^*$. It can be represented on the tape of a Turing machine as a finite sequence of tuples $\{(w_1,a_1),(w_2,a_2),\dots,(w_k,a_k)\}$ where $w_i \in S^*, a_i \in \ag$ and $c(w_i)=a_i$. A set $\mathcal{C}$ of pattern codings defines a $G$-subshift $X_{\mathcal{C}}$ by setting \[X_{\mathcal{C}} \isdef \{ x \in A^{G} : gx\notin [c] \mbox{ for every } g \in G, c \in \mathcal{C} \}.\]

\begin{definition}
	A $G$-subshift $X$ is \define{effectively closed by patterns} if there exists a recursively enumerable set of pattern codings $\mathcal{C}$ such that $X = X_{\mathcal{C}}$. 
\end{definition}

A Turing machine which accepts a pattern coding $c$ if and only if $c \in \mathcal{C}$ is said to \define{define} $X = X_{\mathcal{C}}$. Note that neither $\mathcal{C}$ nor the Turing machine which defines $X$ are unique.

It is not true in general that subshifts which are effectively closed by patterns coincide up to topological conjugacy with effectively closed expansive actions. Every effectively closed expansive action is topologically conjugate to a subshift which is effectively closed by patterns, but the converse only holds if the group is recursively presented, see~\cite[Proposition~2.16]{BaSaSa_2021}. Therefore if we consider recursively presented groups (or groups with decidable word problem) we will simply refer to an ``effectively closed subshift'', while beyond this class we will add ``by patterns'' if we mean this notion instead of ``topologically conjugate to an expansive effectively closed action''.



\section{Product of two amenable groups}\label{sec:amenables}

In this section we prove~\Cref{thm:amenable_is_nope}. We will proceed through the construction of two explicit instances. One of them will work whenever the group $H$ has decidable word problem, whereas the second one will work when $H$ satisfies a technical condition which we call ``property (S)''. Finally, we will show that these two cases cover all finitely generated groups.

Both of the constructions we present here are variations of a well-known example in the literature due to Jeandel (unpublished) and called the ``mirror shift''. This is an effectively closed $\ZZ^2$-subshift which is not sofic and which has the property that one of its (non-expansive) $\ZZ$-subactions can not be realized as the topological factor of a subaction of any $\ZZ^2$-SFT. An interested reader can find more about this mirror shift by reading~\cite[Section 2.4]{ABS2017} and M. Hochman's chapter in~\cite{CANT2010}.

For the remainder of this section we fix two infinite and finitely generated groups $H$ and $K$, and finite symmetric sets of generators $S_H,S_K$ which contain the identity for $H$ and $K$ respectively. Consider the word metric in $H$ with respect to $S_H$. For $n \in \NN$, denote by $B_n = \{ g \in H : |g| \leq n \}$ the set of all elements at distance at most $n$ from the identity in $H$. 

For finite sets $T \Subset H$ and $U \Subset K$, define $\partial T = TS_H \setminus T$, $\partial U = US_K \setminus U$ and $\partial (T\times U) = (TS_H \times US_K) \setminus (T \times U)$. Remark that as $S_H,S_K$ were chosen so that they contain the identity, we have that $|\partial T| = |TS_H|-|T|$, $|\partial U| = |US_K|-|U|$ and $|\partial(T \times U)| = |\partial T||U|+|\partial U||T| + |\partial T||\partial U|$.

\subsection{Property (S): the reflection shift}

Let $G$ be a group which is finitely generated by a symmetric set $S \Subset G$ and let $A = \{ \ast,\symb{0},\symb{1}\}$. The \define{reflection shift} $X_{\texttt{R}} \subset A^{G}$ is the set of configurations determined by the set of forbidden pattern codings \[ \mathcal{C} = \{ c : \{\epsilon, w, w^{-1}\} \to A \mbox{ such that } w \in S^*, c(\epsilon)= \ast, \mbox{ and } c(w) \neq c(w^{-1})\}.  \] 

In simpler words, the reflection shift consists of all configurations $x \in A^{G}$ such that if $x(g) =\ast$ for some $g \in G$, then $x(gh) = x(gh^{-1})$ for every $h \in G$, see~\Cref{fig:inversemirror}. Let us notice that this subshift is always nonempty, as $\{\symb{0},\symb{1}\}^{G} \subset X_{\texttt{R}}$. It is also clear that the set of forbidden pattern codings $\mathcal{C}$ is recursively enumerable, and thus $X_{\texttt{R}}$ is effectively closed by patterns.

\begin{figure}[ht!]
	\scalebox{1}{
	\xConfig{
		\symb{0} \& \symb{1} \& \symb{1} \& \symb{1} \& \symb{0} \& \symb{1} \& \symb{0} \& \symb{0} \& \symb{1} \& \symb{1} \& \symb{1} \& \symb{0} \& \symb{1} \\
		\symb{1} \& \symb{1} \& \symb{0} \& \symb{1} \& \symb{0} \& \symb{1} \& \symb{1} \& \symb{0} \& \symb{1} \& \symb{1} \& \symb{0} \& \symb{0} \& \symb{1} \\
		\symb{0} \& \symb{1} \& \symb{1} \& \symb{0} \& \symb{0} \& \symb{0} \& \symb{0} \& \symb{0} \& \symb{1} \& \symb{0} \& \symb{0} \& \symb{0} \& \symb{0} \\
		\symb{1} \& \symb{1} \& \symb{0} \& \symb{0} \& \symb{0} \& \symb{1} \& \symb{0} \& \symb{0} \& \symb{1} \& \symb{0} \& \symb{0} \& \symb{0} \& \symb{1} \\
		\symb{1} \& \symb{1} \& \symb{0} \& \symb{0} \& \symb{0} \& \symb{1} \& \symb{0} \& \symb{0} \& \symb{1} \& \symb{0} \& \symb{0} \& \symb{0} \& \symb{1} \\
		\symb{0} \& \symb{1} \& \symb{0} \& \symb{0} \& \symb{0} \& \symb{1} \& \symb{0} \& \symb{0} \& \symb{1} \& \symb{0} \& \symb{1} \& \symb{1} \& \symb{1} \\
		\symb{1} \& \symb{0} \& \symb{0} \& \symb{0} \& \symb{1} \& \symb{0} \& \symb{\ast} \& \symb{0} \& \symb{1} \& \symb{0} \& \symb{0} \& \symb{0} \& \symb{1}\\
		\symb{1} \& \symb{1} \& \symb{1} \& \symb{0} \& \symb{1} \& \symb{0} \& \symb{0} \& \symb{1} \& \symb{0} \& \symb{0} \& \symb{0} \& \symb{1} \& \symb{0} \\
		\symb{1} \& \symb{0} \& \symb{0} \& \symb{0} \& \symb{1} \& \symb{0} \& \symb{0} \& \symb{1} \& \symb{0} \& \symb{0} \& \symb{0} \& \symb{1} \& \symb{1} \\
		\symb{1} \& \symb{0} \& \symb{0} \& \symb{0} \& \symb{1} \& \symb{0} \& \symb{0} \& \symb{1} \& \symb{0} \& \symb{0} \& \symb{0} \& \symb{1} \& \symb{1} \\
		\symb{0} \& \symb{0} \& \symb{0} \& \symb{0} \& \symb{1} \& \symb{0} \& \symb{0} \& \symb{0} \& \symb{0} \& \symb{0} \& \symb{1} \& \symb{1} \& \symb{0} \\
		\symb{1} \& \symb{0} \& \symb{0} \& \symb{1} \& \symb{1} \& \symb{0} \& \symb{1} \& \symb{1} \& \symb{0} \& \symb{1} \& \symb{0} \& \symb{1} \& \symb{1} \\
		\symb{1} \& \symb{0} \& \symb{1} \& \symb{1} \& \symb{1} \& \symb{0} \& \symb{0} \& \symb{1} \& \symb{0} \& \symb{1} \& \symb{1} \& \symb{1} \& \symb{0} \\
	}\;}
\caption{Part of a configuration of the reflection shift in $\ZZ^2$.}
\label{fig:inversemirror}
\end{figure}

The following notion was introduced and discussed in a mathoverflow thread by one of the authors\footnote{\href{https://mathoverflow.net/questions/370239/splendid-groups}{https://mathoverflow.net/questions/370239/splendid-groups}} in the context of locally compact groups. In said reference the groups with the property are called ``splendid''. Here we give them the name ``property (S)''.

\begin{definition}
	A countable group $G$ satisfies property (S) if for every finite subset $F \Subset G$ there exists $g \in G$ such that \[ gFg \cap F = \varnothing.  \]
\end{definition}

\begin{proposition}\label{prop:case_prop_S}
	Let $H,K$ be infinite, finitely generated  and amenable groups and suppose that $H$ satisfies property (S). Then the free extension of the reflection shift on $H$ to $G = H \times K$ is not sofic.
\end{proposition}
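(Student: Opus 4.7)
The plan is to argue by contradiction and adapt the classical non-soficity proof of Jeandel's mirror shift to the group-theoretic setting. Suppose $\widetilde{X}_{\texttt{R}}$ is sofic; then there exists an SFT $Y \subseteq B^G$ and a 1-block factor map $\phi\colon Y \to \widetilde{X}_{\texttt{R}}$, with $Y$ nearest-neighbor with respect to a generating set of $G = H \times K$ built from $S_H \cup \{1_H\}$ and $S_K \cup \{1_K\}$ (as is always possible after recoding, cf.~\cite[Proposition 1.7]{tesis}). Since $H$ and $K$ are amenable, so is $G$, so we may work with Følner sequences $(T_n) \Subset H$ and $(U_n) \Subset K$. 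The goal is to derive a contradiction by pattern counting on $T_n \times U_n$: the SFT $Y$ imposes the upper bound $|L_{\widetilde{X}_{\texttt{R}}}(T_n \times U_n)| \leq |B|^{|T_n||U_n| + O(|\partial(T_n \times U_n)|)}$ via the factor window, while we would construct sufficiently many distinct $\widetilde{X}_{\texttt{R}}$-patterns on $T_n \times U_n$ to contradict this bound once $n$ is large enough and amenability has made the boundary correction asymptotically negligible.

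Property (S) is the key ingredient for producing many $\widetilde{X}_{\texttt{R}}$-patterns. A $\ast$-symbol placed at position $g \in H$ on a coset $H \times \{k\}$ imposes on that coset the reflection $y \mapsto g y^{-1} g$, pairing values at $y$ and at $g y^{-1} g$. Property (S) applied to a symmetric Følner set $T_n$ delivers an element $g \in H$ with $g T_n g \cap T_n = \varnothing$, meaning this reflection sends $T_n$ entirely outside itself, so placing such a $\ast$ at $g$ does not force any nontrivial identification among positions of $T_n$. By iterating property (S) with care, one would extract a large family $\{g_i\}_{i \in I_n} \subseteq H$ of candidate reflection centers whose effects on $T_n$ are mutually ``independent''. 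On each coset $H \times \{k\}$ for $k \in U_n$, one then independently chooses a subset of $\{g_i\}$ to carry $\ast$'s together with a $\{\symb{0},\symb{1}\}$-valued filling compatible with the induced reflection constraints, producing $\widetilde{X}_{\texttt{R}}$-configurations with pairwise distinct restrictions to $T_n \times U_n$.

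The main obstacle, and the delicate combinatorial core of the proof, is to quantify this construction so that the total number of $\widetilde{X}_{\texttt{R}}$-patterns on $T_n \times U_n$ grows strictly faster than the SFT upper bound $|B|^{|T_n||U_n| + O(|\partial(T_n \times U_n)|)}$. This requires extracting a family $\{g_i\}$ large enough that the per-coset ``entropy surplus'' introduced by the $\ast$-placements exceeds $\log|B|$ bits per site of $T_n$, and verifying that different combinations of $\ast$-placements truly produce distinct patterns rather than collapsing under some unforeseen combined reflection symmetry. Property (S) plays here the role that the commutative disjointness of $\ZZ^2$ plays in Jeandel's original argument, and its iterated application along the Følner sequence in $H$ — together with the amenability of $K$, which ensures that the per-coset surpluses accumulate across the $|U_n|$ cosets without being absorbed by boundary effects — is what powers the contradiction.
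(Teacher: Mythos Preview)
Your approach has a fundamental gap: a pure pattern-counting argument on $T_n\times U_n$ cannot yield a contradiction. The upper bound you invoke, $|L_{\widetilde{X}_{\texttt{R}}}(F)| \leq |B|^{|F| + O(|\partial F|)}$, is trivially satisfied for any subshift on the three-letter alphabet $\{\ast,\symb{0},\symb{1}\}$, since $|L_{\widetilde{X}_{\texttt{R}}}(F)| \leq 3^{|F|} \leq |B|^{|F|}$ whenever $|B|\geq 3$ (which one may always assume). No placement of $\ast$'s, however clever, can produce more than $3^{|T_n||U_n|}$ distinct patterns on $T_n\times U_n$, so the ``per-coset entropy surplus exceeding $\log|B|$ bits per site'' that your plan requires is impossible to achieve. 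Pattern complexity alone does not separate sofic from non-sofic; the iteration of property~(S) to a large family $\{g_i\}$ is a dead end for this reason.

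The paper's argument is not a counting argument but a \emph{cut-and-paste} one. One first passes to the sub-subshift $Y\subset\widetilde{X}_{\texttt{R}}$ in which the $\ast$'s occur only in full $K$-columns, so that a single $\ast$-column at $\{1_H\}\times K$ imposes the reflection $h\mapsto h^{-1}$ on every coset simultaneously. Property~(S) is used exactly once: for a symmetric F{\o}lner set $T\Subset H$ one finds $g$ with $gTg\cap T=\varnothing$, so that $gT$ and $T^{-1}g^{-1}$ are disjoint and avoid $1_H$. For each of the $2^{|T||U|}$ patterns $p\colon gT\times U\to\{\symb{0},\symb{1}\}$ one builds $y_p\in Y$ carrying $p$ on $gT\times U$ and its reflection on $T^{-1}g^{-1}\times U$. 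Amenability gives $2^{|T||U|} > |B|^{|\partial(gT\times U)|}$, so by pigeonhole two distinct $p,p'$ have SFT-preimages agreeing on $\partial(gT\times U)$; swapping the interior produces a valid configuration of the nearest-neighbor SFT cover whose $\phi$-image has the $\ast$-column but mismatched values at some pair $(t,u),(t^{-1},u)$, contradicting membership in $Y$. The key point you missed is that the pigeonhole compares $2^{|F|}$ against $|B|^{|\partial F|}$ (boundary, not interior), and the contradiction arises from surgery violating a global constraint of the factor, not from exceeding a pattern count.
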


\begin{proof}
	 Let us denote by $\widetilde{X}_{\texttt{R}} \subset \{ \symb{0},\symb{1},\ast \}^{H \times K}$ the free extension to $H \times K$ of the reflection shift on $H$. Let $Y \subset \widetilde{X}_{\texttt{R}}$ be the subshift given by \begin{align*}
	     Y & = \{ y \in \widetilde{X}_{\texttt{R}} : \mbox{ for every } h \in H, k \in K, s \in S_K \mbox{ we have } y(h,k)= \ast \mbox{ if and only if } y(h,ks) = \ast    \}\\
      & = \{ y \in \widetilde{X}_{\texttt{R}} : \mbox{ for every } h \in H, k_1,k_2 \in K, \mbox{ we have } y(h,k_1)= \ast \mbox{ if and only if } y(h,k_2) = \ast    \}.
	 \end{align*} 

  The equality between both definitions of $Y$ is due to the fact that $S_K$ is a generating set for $K$. We can think of $Y$ as the subset of all configurations of $\widetilde{X}_{\texttt{R}}$ in which the symbols $\ast$ occur in ``columns'' in the second group. That is, if the symbol $\ast$ occurs in some position $(h,k)$, then it occurs in all $\{h\}\times K$.
	 
	 Suppose that $\widetilde{X}_{\texttt{R}}$ is sofic. It follows directly from the definition that $Y$ is also sofic. Therefore there exists a finite set $B$, a nearest neighbor SFT $Z\subset B^{H \times K}$ and a 1-block topological factor map $\phi \colon Z \to Y$.
	 
	Now let $\varepsilon >0$ such that \[ 2\varepsilon + \varepsilon^2 < \frac{\log(2)}{\log(|B|)}. \]
	As both $H,K$ are amenable, there exists finite sets $T\Subset H$ and $U\Subset K$ such that $|\partial T|\leq \varepsilon|T|$ and $|\partial U|\leq \varepsilon|U|$. We will further choose $T$ such that $T = T^{-1}$. This can always be done, see~\cite[Corollary 5.3]{Namioka1964} From our choice of $\varepsilon$ we obtain that \[2^{|T\times U|} = 2^{|T||U|} > |B|^{|U||T|(2\varepsilon + \varepsilon^2)} \geq |B|^{|T||\partial U|+|U||\partial T|+|\partial T||\partial U|} =  |B|^{|\partial (T \times U)|}.  \]
	
	By our assumption that $H$ has property (S), we obtain that there is $g \in H$ such that $gTg \cap T = \varnothing$ and thus as $T$ was chosen symmetric we obtain that the sets $gT$ and $T^{-1}g^{-1}$ are disjoint. Notice that this implies that neither $gT$ nor $T^{-1}g^{-1}$ may contain the identity.
	
	For any pattern $p \colon gT \times U \to \{\symb{0},\symb{1}\}$ we construct a configuration $y_p \in Y$ whose restriction to $gT\times U$ is $p$ by letting\[ y_p(h,k) = \begin{cases}
		\ast & \mbox{ if } h = 1_{H}\\
		p(h,k) & \mbox{ if } h \in gT, k \in U\\
		p(h^{-1},k) & \mbox{ if } h \in T^{-1}g^{-1}, k \in U\\
		\symb{0} & \mbox{otherwise}.\\
	\end{cases}  \]

	For each $p$ as above, let $x_p \in Z$ such that $\phi(x_p) = y_p$. Notice that there are $2^{|T||U|}$ patterns $p$ as above but there are at most $|B|^{|\partial (T \times U)|}$ possible restrictions of a configuration in $Z$ to $\partial(gT \times U)$ and thus by the pigeonhole principle there must exist two distinct patterns $p,p'$ such that \[ x_p|_{\partial(gT \times U)} = x_{p'}|_{\partial(gT \times U)}.  \]
	Fix some pair $(t,u)\in gT \times U$ such that $p(t,u)\neq p'(t,u)$. As $Z$ is nearest neighbor, it follows that we may paste the restriction of $x_{p'}$ at positions $(H \setminus gT) \times (K \setminus U)$ and $x_{p}$ at $gT \times U$ and obtain a valid configuration in $Z$. Applying the map $\phi$ to said configuration we obtain $y \in \{\symb{0},\symb{1},\ast\}^{H \times K}$ which satisfies that $y(1_{H},u)=\ast$, $y(t,u) = y_{p}(t,u) = p(t,u)$ but $y(t^{-1},u) = y_{p'}(t^{-1},u) = y_{p'}(t,u) =p'(t,u)$. As $p(t,u)\neq p'(t,u)$ we obtain that $y \notin Y$, which contradicts the assumption that $\phi$ is a topological factor map. We conclude that $\widetilde{X}_{\texttt{R}}$ cannot be a sofic subshift.\end{proof}
	
	\subsection{Decidable word problem: the ball mimic shift} Now we shall deal with the case where $H$ has decidable word problem. 
	
	\begin{lemma}\label{lemma:twosequences}[Lemma 2.15 of~\cite{ABS2017}]
		Suppose the word problem of $H$ is decidable. There exists two recursively enumerable sequences $\boldsymbol{u}=(u_n)_{n \in \NN}$  and $\boldsymbol{v}=(v_n)_{n \in \NN}$ of words in $S_H^*$ such that the collection of sets \[ \{\{1_H\}\} \cup \{ \underline{u_n}B_n : n \in \NN\} \cup \{ \underline{v_n}B_n : n \in \NN \}  \mbox{ is pairwise disjoint}.          \] 
	\end{lemma}
	
	In simpler words, there is an algorithm which enumerates two sequences of words which represent ``centers'' of pairwise disjoint balls of increasing radius such that no ball contains the identity. The proof of~\Cref{lemma:twosequences} is straightforward and an explicit algorithm producing such sequences (using as a subalgorithm one for the word problem of $H$) can be found in~\cite{ABS2017}. 
	
	Let $A = \{ \ast,\symb{0},\symb{1}\}$ and fix two sequences $\boldsymbol{u},\boldsymbol{v}$ as in~\Cref{lemma:twosequences}. For each $n \in \NN$ let $a_n = \underline{u_n}$ and $b_n = \underline{v_n}$ be the corresponding elements of $H$. The \define{ball mimic shift} $X_{\texttt{BM}} \subset A^{H}$ is the set of configurations $x \in A^{H}$ which satisfy the following property: if $x(g) = \ast$ for some $g \in H$, then for every $n \in \NN$ and $h \in B_n$ we have $x(ga_nh) = x(gb_nh)$.
	
	In simpler words, if the symbol $\ast$ occurs at some position $g$ in $x$, then the restriction of $x$ to $ga_nB_n$ must mimic the restriction of $x$ to $gb_nB_n$ for every $n \in \NN$. 
	
	\begin{lemma}
		Let $H$ be finitely generated group with decidable word problem. The ball mimic shift with respect to two sequences as in~\Cref{lemma:twosequences} is effectively closed (by patterns).
	\end{lemma}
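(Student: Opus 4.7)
The strategy is to write down an explicit recursively enumerable set of forbidden pattern codings that defines $X_{\texttt{BM}}$. For each $n \in \NN$, each word $h' \in S_H^*$ with $|h'| \leq n$, and each pair of distinct symbols $a, b \in A$, let $c_{n,h',a,b}$ denote the pattern coding with support $\{\epsilon,\, u_n h',\, v_n h'\}$ (concatenations in the free monoid $S_H^*$) and values $\epsilon \mapsto \ast$, $u_n h' \mapsto a$, $v_n h' \mapsto b$. Let $\mathcal{C}$ be the collection of all such codings.

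First I would check that $\mathcal{C}$ is recursively enumerable. Since the sequences $\boldsymbol{u}$ and $\boldsymbol{v}$ provided by~\Cref{lemma:twosequences} are recursively enumerable, a Turing machine can on input $n$ produce the words $u_n, v_n \in S_H^*$. From there, a simple loop enumerates all tuples $(n,h',a,b)$ satisfying $|h'|\leq n$ and $a\neq b$ and prints the associated pattern coding in the finite tuple representation $\{(w_i, a_i)\}$ on the tape.

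Next I would verify $X_{\mathcal{C}} = X_{\texttt{BM}}$ by unwinding the cylinder definitions. For $X_{\texttt{BM}} \subseteq X_{\mathcal{C}}$: if $g x \in [c_{n,h',a,b}]$ for some $g \in H$, then $x(g^{-1}) = \ast$ while $x(g^{-1} a_n \underline{h'}) = a \neq b = x(g^{-1} b_n \underline{h'})$; since $\underline{h'} \in B_n$, this directly contradicts the defining property of $X_{\texttt{BM}}$. Conversely, if $x \notin X_{\texttt{BM}}$, pick a witnessing $g \in H$, $n \in \NN$, $h \in B_n$ with $x(g) = \ast$ and $x(g a_n h) \neq x(g b_n h)$; choose any word $h' \in S_H^*$ of length at most $n$ with $\underline{h'} = h$, set $a = x(g a_n h)$ and $b = x(g b_n h)$, and observe that $g^{-1} x \in [c_{n,h',a,b}]$, so $x \notin X_{\mathcal{C}}$.

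The one place where a little care is needed is the translation between the group-theoretic description of the subshift (conditions on group elements in balls $B_n$) and the word-theoretic description used by pattern codings (conditions on words in $S_H^*$); the decidability of the word problem is not directly invoked here, but it is used implicitly through~\Cref{lemma:twosequences} to make the sequences $\boldsymbol{u}, \boldsymbol{v}$ computable. Beyond that, everything is a routine unpacking of definitions.
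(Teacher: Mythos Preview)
Your proposal is correct and follows essentially the same approach as the paper: you define the same recursively enumerable set $\mathcal{C}$ of pattern codings (supports $\{\epsilon, u_n w, v_n w\}$ with $|w|\leq n$, value $\ast$ at $\epsilon$, and distinct values at the other two words) and argue $X_{\mathcal{C}}=X_{\texttt{BM}}$. The only difference is that you spell out both inclusions explicitly, whereas the paper declares the equality immediate.
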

	
	\begin{proof}
		Let $\boldsymbol{u}=(u_n)_{n \in \NN}$  and $\boldsymbol{v}=(v_n)_{n \in \NN}$ the sequences that define $X_{\texttt{BM}}$. Consider the set of pattern codings\[ \mathcal{C} = \{ c : \{\epsilon, u_nw, v_nw\} \to A \mbox{ such that } n \in \NN, w \in S^*, |w| \leq n, c(\epsilon)= \ast, \mbox{ and } c(u_nw) \neq c(v_nw)\}.  \] 
		As $\boldsymbol{u},\boldsymbol{v}$ are recursively enumerable, it follows that $\mathcal{C}$ is a recursively enumerable set of pattern codings. It is immediate that $X_{\texttt{BM}} = X_{\mathcal{C}}$.
	\end{proof}
	
	The next proof is very similar to the proof of~\Cref{prop:case_prop_S}, we give it for completeness.
	
	\begin{proposition}\label{prop:case_WP_dec}
		Let $H,K$ be infinite, finitely generated and amenable groups and suppose that $H$ has decidable word problem. The free extension of the ball mimic shift on $H$ to $G = H \times K$ is not sofic.
	\end{proposition}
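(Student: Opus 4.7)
The plan is to follow the strategy of~\Cref{prop:case_prop_S}, replacing the reflection constraint by the ball-mimic constraint. First I would define an auxiliary subshift $Y \subset \widetilde{X}_{\texttt{BM}}$ in which the $\ast$ symbols are forced to appear in full $K$-columns, by requiring that $y(h,k) = \ast$ if and only if $y(h,ks) = \ast$ for every $s \in S_K$. Since this is a finite-type condition imposed on top of $\widetilde{X}_{\texttt{BM}}$, the assumption that $\widetilde{X}_{\texttt{BM}}$ is sofic yields that $Y$ is sofic too, so one may realize $Y$ as a $1$-block topological factor $\phi\colon Z \to Y$ of a nearest-neighbor SFT $Z \subset B^{H \times K}$.

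To set up the pigeonhole I would pick $\varepsilon > 0$ with $2\varepsilon + \varepsilon^2 < \log 2 / \log|B|$, choose Følner sets $T \Subset H$ and $U \Subset K$ satisfying $|\partial T| \le \varepsilon|T|$ and $|\partial U| \le \varepsilon|U|$, and take $n \in \NN$ large enough that $T \subset B_n$; this is possible because $(B_n)_n$ exhausts $H$. For each pattern $p \colon a_nT \times U \to \{\symb{0},\symb{1}\}$ I would propose the configuration $y_p$ which takes the value $\ast$ on the column $\{1_H\}\times K$, equals $p(a_nt,k)$ at both $(a_nt,k)$ and $(b_nt,k)$ whenever $t \in T$ and $k \in U$, and is $\symb{0}$ everywhere else. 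The key verification is that $y_p$ really lies in $Y$: the disjointness property provided by~\Cref{lemma:twosequences} of the balls $\underline{u_m}B_m$ and $\underline{v_m}B_m$, together with the fact that none of them contains $1_H$, makes the mimic constraint forced by the $\ast$ at each $(1_H,k)$ automatic at all ball-indices $m \neq n$ (both sides equal $\symb{0}$), while at $m = n$ it is satisfied by construction on $T$ and holds trivially on $B_n \setminus T$; this produces $2^{|T||U|}$ distinct valid configurations in $Y$.

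The contradiction would then be concluded exactly as in~\Cref{prop:case_prop_S}. For each $p$ I lift $y_p$ to some $x_p \in Z$ with $\phi(x_p) = y_p$; the boundary estimate $|\partial(a_nT \times U)| \le (2\varepsilon + \varepsilon^2)|T||U|$ together with the choice of $\varepsilon$ gives $2^{|T||U|} > |B|^{|\partial(a_nT \times U)|}$, so the pigeonhole principle produces $p \ne p'$ with $x_p$ and $x_{p'}$ agreeing on $\partial(a_nT \times U)$. Pasting $x_p$ inside $a_nT \times U$ and $x_{p'}$ on its complement yields a valid element of $Z$ by nearest-neighborness, and applying $\phi$ produces a configuration in $Y$ whose value at $(a_nt,u)$ is $p(a_nt,u)$ while its value at $(b_nt,u)$ is $p'(a_nt,u)$ for $(t,u)\in T\times U$; picking $(t,u)$ where $p$ and $p'$ disagree contradicts the mimic condition forced by the $\ast$ at $(1_H,u)$. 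Compared with~\Cref{prop:case_prop_S}, the main subtlety to watch out for is the coordination between the enumerated sequences $\boldsymbol{u},\boldsymbol{v}$ and the Følner set $T$: one must choose $n$ so that $T \subset B_n$ (so the mimic instruction fully transfers the pattern) while retaining that $a_nT$ and $b_nT$ stay disjoint and avoid the identity. Beyond this, the argument is an essentially cosmetic adaptation of the previous proof.
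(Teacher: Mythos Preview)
Your proposal is correct and matches the paper's own proof essentially line by line: the same auxiliary subshift $Y$ with $\ast$-columns, the same F{\o}lner/pigeonhole inequality on $\partial(a_nT\times U)$, the same configurations $y_p$ built from the ball-mimic structure, and the same paste-and-contradict finish. The only cosmetic differences are that the paper indexes its patterns by $T\times U$ rather than $a_nT\times U$ and carries over the (here unused) requirement $T=T^{-1}$ from the reflection-shift argument.
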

	
	\begin{proof}
		Let us denote by $\widetilde{X}_{\texttt{BM}} \subset \{ \symb{0},\symb{1},\ast \}^{H \times K}$ the free extension to $H \times K$ of the ball mimic shift on $H$. Let $Y \subset \widetilde{X}_{\texttt{BM}}$ be the subshift given by \begin{align*}
		    Y = \{ y \in \widetilde{X}_{\texttt{BM}} : \mbox{ for every } h \in H, k_1,k_2 \in K \mbox{ we have } y(h,k_1)= \ast \mbox{ if and only if } y(h,k_2) = \ast    \}.
		\end{align*}    
		
		Suppose that $\widetilde{X}_{\texttt{BM}}$ is sofic. As $K$ is finitely generated, it follows that $Y$ is sofic and thus there exists a finite set $B$, a nearest neighbor SFT $Z\subset B^{H \times K}$ and a 1-block topological factor map $\phi \colon Z \to Y$.
		
		Let $\varepsilon >0$ such that \[ 2\varepsilon + \varepsilon^2 < \frac{\log(2)}{\log(|B|)}. \]
		As both $H,K$ are amenable, there exists finite sets $T\Subset H$ and $U\Subset K$ such that $|\partial T|\leq \varepsilon|T|$, $|\partial U|\leq \varepsilon|U|$ and $T = T^{-1}$. From our choice of $\varepsilon$ we obtain that \[ 2^{|T||U|} > |B|^{|U||T|(2\varepsilon + \varepsilon^2)} \geq |B|^{|T||\partial U|+|U||\partial T|+|\partial T||\partial U|} =  |B|^{|\partial (T \times U)|}.  \]

		As $T$ is finite, we can find $n \in \NN$ such that $T \subset B_n$. Let $p \colon T \times U \to \{\symb{0},\symb{1}\}$ be any pattern. As $a_nB_n \cap b_nB_n = \varnothing$ and $1_H \notin a_nB_n \cup b_nB_n$, we can construct the following map $y_p \colon H \times K \to A$.\[y_p(h,k) = \begin{cases}
			\ast & \mbox{ if } h = 1_H\\
			p(t,k) &\mbox{if } k \in U  \mbox{ and } h \in \{a_nt,b_nt\} \mbox{ for some } t \in T \\
			\symb{0} &\mbox{otherwise}
		\end{cases}\]
		
		It is clear by construction that $y_p \in Y$. For each $p$ as above, let $x_p \in Z$ such that $\phi(x_p) = y_p$. Notice that there are $2^{|T||U|}$ patterns as above but there are at most $|B|^{|\partial (T \times U)|}$ possible restrictions of a configuration in $Z$ to $\partial(a_nT \times U)$ and thus by the pigeonhole principle there must exist two distinct patterns $p,p'$ such that \[ x_p|_{\partial(a_nT \times U)} = x_{p'}|_{\partial(a_nT \times U)}.  \]
		Fix some pair $(t,u)\in T \times U$ such that $p(t,u)\neq p'(t,u)$. As $Z$ is nearest neighbor, it follows that we may paste the restriction of $x_{p'}$ at positions $(H \setminus a_nT) \times (K \setminus U)$ and $x_{p}$ at $a_nT \times U$ and obtain a valid configuration in $Z$. Applying the map $\phi$ to said configuration we obtain $y \in \{\symb{0},\symb{1},\ast\}^{H \times K}$ which satisfies that $y(1_{H},u)=\ast$, $y(a_nt,u) = y_{p}(a_nt,u) = p(t,u)$ but $y(b_nt,u) = y_{p'}(b_nt,u) =p'(t,u)$. As $p(t,u)\neq p'(t,u)$ we obtain that $y \notin Y$, which contradicts the assumption that $\phi$ is a topological factor map. We conclude that $\widetilde{X}_{\texttt{BM}}$ cannot be sofic.\end{proof}
	
	\subsection{Proof of~\Cref{thm:amenable_is_nope}}

	By~\Cref{prop:case_WP_dec,prop:case_prop_S}, we know that if $H$ is a finitely generated amenable group which has decidable word problem or satisfies property (S), then there exists an $H$-subshift which is effectively closed by patterns and whose free extension to $H\times K$ is not sofic.
	
	We will now show that any finitely generated group which does not satisfy property (S) must be virtually nilpotent. As every finitely generated virtually nilpotent group has decidable word problem (see for instance~\cite[Theorem 4.6]{LyndonSchupp1977}), this is enough to prove~\Cref{thm:amenable_is_nope}.
	
	Let us recall that a group $G$ has the infinite conjugacy class property (ICC) if every non-trivial element has infinitely many elements in its conjugacy class. A classical result of Duguid and McLane~\cite{mclain_1956,duguid_mclain_1956} (see~\cite{FrischFerdowsi_ICC_quot} for a modern proof by Frisch and Vahidi Ferdowsi) states that every infinite finitely generated group is either virtually nilpotent or it admits an ICC quotient.
	
	\begin{lemma}\label{lem:quot_prop_S}
		If a group $G$ admits a quotient $H$ with property (S), then $G$ has property (S).
	\end{lemma}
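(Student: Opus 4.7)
The plan is to push the property-(S) witness from the quotient back up to $G$ by choosing any lift. Let $\pi\colon G \twoheadrightarrow H$ be the surjective homomorphism witnessing that $H$ is a quotient of $G$, and suppose $H$ has property (S). Given an arbitrary finite subset $F \Subset G$, I would first form the finite subset $\pi(F) \Subset H$.

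Next I would apply property (S) in $H$ to $\pi(F)$ to obtain an element $\bar{g} \in H$ such that $\bar{g}\,\pi(F)\,\bar{g} \cap \pi(F) = \varnothing$. Because $\pi$ is surjective, I can pick some $g \in G$ with $\pi(g) = \bar{g}$. The claim is that this $g$ already witnesses property (S) for the original set $F$, i.e.\ that $gFg \cap F = \varnothing$.

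To verify this, suppose for contradiction that there exist $f_1,f_2 \in F$ with $gf_1 g = f_2$. Applying the homomorphism $\pi$ to both sides gives $\bar{g}\,\pi(f_1)\,\bar{g} = \pi(f_2)$, which puts an element of $\bar{g}\,\pi(F)\,\bar{g}$ inside $\pi(F)$, contradicting the choice of $\bar{g}$. Hence $gFg \cap F = \varnothing$, and since $F$ was arbitrary, $G$ satisfies property (S).

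There is no real obstacle here: the only substantive input is the existence of a lift of $\bar{g}$, which follows immediately from surjectivity of $\pi$, and the verification that the lift works reduces to the fact that $\pi$ respects products. The lemma is essentially the observation that property (S) is invariant under taking preimages of finite sets along surjective homomorphisms.
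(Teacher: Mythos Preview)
Your proof is correct and follows exactly the same approach as the paper's: take a lift of the property-(S) witness from the quotient via surjectivity, then observe that any intersection $gFg \cap F$ would push forward under the homomorphism to an intersection $\bar{g}\,\pi(F)\,\bar{g}\cap\pi(F)$. The paper's version is slightly terser (it notes $\phi(gFg)=h\phi(F)h$ and concludes directly rather than arguing by contradiction), but the content is identical.
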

	
	\begin{proof}
		Let $\phi \colon G \to H$ be an epimorphism and let $F\Subset G$. It follows that $\phi(F)$ is also finite and thus there exists $h \in H$ such that $h\phi(F)h \cap \phi(F)=\varnothing$. Choose $g\in G$ such that $\phi(g)=h$, it follows that $\phi(gFg) = h\phi(F)h$ and thus $gFg \cap F = \varnothing$ as well.
	\end{proof}
	
	Therefore, it suffices to show that groups with the ICC property have property (S). This is itself a consequence of a result of Erschler and Kaimanovitch~\cite{Erschler2023}.
	
	\begin{proposition}\label{prop_erschler}[Proposition 4.25 of~\cite{Erschler2023}]
		Let $G$ have the ICC property. For every finite $Z\Subset G$ there exist infinitely many $g \in G$ such that the only solutions to the equation \[ gxg^{\varepsilon} = y \]
		with $\varepsilon\in \{-1,+1\}$ and $x,y \in Z$, are given (if they exist) by $x = y = 1_{G}$.
	\end{proposition}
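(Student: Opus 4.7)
The plan is to analyze, for each pair $(x, y) \in Z \times Z$ with $(x, y) \neq (1_G, 1_G)$ and each $\varepsilon \in \{\pm 1\}$, the bad set $B_{x, y, \varepsilon} = \{g \in G : g x g^\varepsilon = y\}$, and to show that the complement of the finite union $B = \bigcup B_{x, y, \varepsilon}$ in $G$ is infinite. This yields the desired infinite collection of good $g$.

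For $\varepsilon = -1$: the equation $gxg^{-1} = y$ has no solutions when $x = 1_G \neq y$, and for $x \neq 1_G$ its solution set is either empty or a single coset of the centralizer $C_G(x)$. By the ICC property, $C_G(x)$ has infinite index in $G$, so $B_{x, y, -1}$ is contained in a coset of an infinite-index subgroup.

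For $\varepsilon = +1$: rewrite $gxg = y$ as $(xg)^2 = xy$. Since any element $h$ with $h^2 = z$ commutes with $z$ and therefore lies in $C_G(z)$, whenever $xy \neq 1_G$ the set $B_{x, y, +1}$ sits inside the coset $x^{-1} C_G(xy)$, again of infinite index by ICC. The delicate residual case is $xy = 1_G$, i.e.\ $y = x^{-1}$ with $x \neq 1_G$, where $B_{x, x^{-1}, +1}$ becomes the translate $x^{-1}(I \cup \{1_G\})$, with $I$ the set of involutions of $G$. Here I would argue that in an ICC group $I$ cannot be cofinite: $I$ is closed under conjugation, so $G \setminus I$ is a union of conjugacy classes, and if $G \setminus I$ were finite then by ICC it would have to be empty, forcing $G$ to have exponent $2$ and hence be abelian, contradicting ICC. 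A refinement of this conjugation argument further shows that any finite union of translates of $I \cup \{1_G\}$ still has infinite complement in $G$.

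Combining the two cases, $B$ is the union of finitely many cosets of infinite-index subgroups together with finitely many translates of $I \cup \{1_G\}$. To finish, one applies a strengthening of B.~H.~Neumann's covering lemma: absorbing any finite exceptional set into cosets of the trivial subgroup (which has infinite index in the infinite group $G$), if $B$ were cofinite then $G$ would be covered by finitely many cosets of infinite-index subgroups together with finitely many translates of the non-cofinite set $I$, which we just ruled out. Hence $G \setminus B$ is infinite, giving the required infinite family of good $g$. The main technical obstacle is the involution subcase, since the set of involutions is neither a subgroup nor a coset; the crucial structural input is that in ICC groups the involutions form a thin subset of $G$, after which everything reduces to standard Neumann-style covering considerations on the centralizers produced by the ICC hypothesis.
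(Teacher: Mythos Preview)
The paper does not prove this proposition; it is quoted from Erschler--Kaimanovich and used as a black box. So there is no in-paper argument to compare against, and your attempt stands or falls on its own.

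Your case analysis is sound, and the treatment of $\varepsilon=-1$ and of $\varepsilon=+1$ with $xy\neq 1_G$ is correct: each bad set lies in a coset of the centralizer of a nontrivial element, hence of infinite index by ICC, and Neumann's covering lemma (plus the trivial-subgroup trick for finite exceptional sets) then shows that these cosets alone cannot be cofinite. Your argument that the involution set $I$ itself is not cofinite in an ICC group is also correct.

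The gap is precisely where you flag it. In the residual case $\varepsilon=+1$, $y=x^{-1}$, $x\neq 1_G$, the bad set is a translate of $I\cup\{1_G\}$, and you assert without proof that ``a refinement of this conjugation argument'' handles finite unions of such translates, and then that ``a strengthening of B.~H.~Neumann's covering lemma'' handles the mixture with infinite-index cosets. Neither step is justified. The conjugation argument you gave for a single copy of $I$ relies essentially on $I$ being conjugation-invariant; a translate $aI$ is not conjugation-invariant (ICC groups have trivial centre, so conjugating $aI$ moves the translating element), and no direct refinement presents itself. Neumann's lemma, for its part, is a statement about cosets of subgroups; $I$ is neither a subgroup nor a coset, and there is no standard version of the lemma that absorbs translates of an arbitrary non-cofinite set alongside cosets. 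Exhibiting infinitely many $g$ that simultaneously avoid finitely many translates of $I$ and finitely many infinite-index cosets in a general ICC group is a genuine additional argument, not a routine detail---and is presumably the content that Erschler--Kaimanovich actually supply.
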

	
	With this proposition in hand, we can show that ICC groups have property (S).
	
	\begin{lemma}\label{lema:ICC_S}
		Let $G$ be a finitely generated ICC group. Then $G$ has property (S).
	\end{lemma}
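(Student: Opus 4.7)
Given a finite subset $F \Subset G$, the plan is to apply Proposition~\ref{prop_erschler} to $Z = F$ and read off property (S) from the conclusion. Without loss of generality we may assume $1_G \in F$, since a witness $g$ for the larger set $F \cup \{1_G\}$ is a fortiori a witness for $F$. The proposition then yields an infinite set $G_0 \subset G$ of ``good'' elements; for any $g \in G_0$, the only pair $(x,y) \in F \times F$ that can possibly solve $gxg = y$ is $(1_G, 1_G)$, and this solution exists if and only if $g^2 = 1_G$. In particular $gFg \cap F \subset \{g^2\}$, and the intersection is empty exactly when $g$ is not an involution.

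It therefore suffices to locate a non-involution inside $G_0$. Since $G$ is ICC, its centre is trivial, so $G$ is non-abelian and in particular contains at least one element $s$ with $s^2 \neq 1_G$; by ICC applied to $s$, the entire conjugacy class of $s$ consists of non-involutions (conjugation preserves the order of an element) and is infinite. To plant such a non-involution inside the good family, I would re-apply Proposition~\ref{prop_erschler} to the enlarged finite set $Z' = F \cup \{s, s^{-1}\}$; the resulting infinite good subfamily $G_0' \subset G_0$ satisfies the additional constraint $gsg \neq s^{-1}$, equivalently $(gs)^2 \neq 1_G$. Ruling out the remaining pathological scenario in which every element of $G_0'$ is itself an involution requires a further enlargement of $Z'$ by finitely many conjugates of $s$ drawn from distinct cosets of the centraliser $\operatorname{Cent}(s)$, combined with the ICC observation that the centraliser of any non-identity element has infinite index. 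A B.~H.~Neumann-type counting argument then shows that a putative all-involution good family would have to avoid a finite union of infinite-index subgroups and simultaneously satisfy a growing list of inequations, which becomes incompatible with the infinitude guaranteed by Proposition~\ref{prop_erschler} after finitely many enlargements.

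The principal difficulty of the proof is precisely this last ingredient: upgrading the infinitude of good elements from Proposition~\ref{prop_erschler} to the existence of a good non-involution. Everything else---the reduction to the case $1_G \in F$, the containment $gFg \cap F \subset \{g^2\}$, and the final deduction of property (S)---is a routine unpacking of the statement of the proposition.
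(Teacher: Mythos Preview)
Your opening reduction is correct: applying Proposition~\ref{prop_erschler} with $Z = F \ni 1_G$ yields $gFg \cap F = \varnothing$ for any good $g$ that is not an involution, so everything hinges on producing one non-involution in the good set. The gap is in your second paragraph. The ``B.~H.~Neumann-type counting argument'' is asserted but not carried out, and I do not see how it would go: adding conjugates $s_1,\dots,s_k$ of $s$ to $Z$ forces a good involution $g$ to lie outside each centraliser $\operatorname{Cent}(s_i)$, but those are infinite-index subgroups and Neumann's lemma only tells you their union is not all of $G$ --- it does not prevent an infinite family of involutions from sitting in the complement. Your observation $(gs)^2 \neq 1$ is tantalising but you never exploit it; it says $gs$ is a non-involution, not that some element of $G_0'$ is.

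The paper sidesteps the search for a non-involution entirely. It applies Proposition~\ref{prop_erschler} with the enlarged set $Z = B_{2n+1}$ (where $F \subset B_n$), takes \emph{any} good $g$, and if $g$ happens to be an involution replaces it by $g' = gh$ for any $h$ of word-length exactly $n+1$. The key is that $g'$ need not itself be good: one verifies $g'Fg' \cap F = \varnothing$ directly, since $g'kg' = k'$ with $k,k' \in B_n$ rewrites as $g(hk)g = k'h^{-1}$ with $hk, k'h^{-1} \in B_{2n+1} = Z$, forcing $hk = 1_G$, which is impossible because $|h| = n+1 > n \geq |k|$. This is precisely the move your observation $(gs)^2 \neq 1$ was pointing toward --- use a translate of $g$ as the witness, having enlarged $Z$ in advance to absorb the translation --- but you turned instead to the harder (and here unresolved) question of whether $G_0'$ itself must contain a non-involution.
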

	
	\begin{proof}
		Let $F\Subset G$. As $G$ is finitely generated, there is $n \in \NN$ such that $F \subset B_n$, where $B_n$ denotes the ball of size $n$ in the word metric with respect to some set of generators. Set $Z = B_{2n+1}$. By~\Cref{prop_erschler}, there exists $g \in G$ such that the only possible solution to $gxg=y$ are given by $x=y=1_{G}$. Note that if there are any such solutions, then $g$ is necessary an involution.
		
		If $g$ is not an involution, it follows that $gFg \cap F = \varnothing$ and we are done. If $g$ is an involution choose any $h$ such that $|h|=n+1$ and let $g' = gh$. Suppose there are $k,k' \in B_n$ such that $g'kg' = k'$, then $g(hk)g = k'h^{-1}$. As $\max\{|hk|,|k'h^{-1}|\} \leq 2n+1$, it follows that both $hk,k'h^{-1}\in Z$ and thus $hk = k'h^{-1} = 1_{G}$, which cannot happen because $|h| = n+1$ and $|k| \leq n$. We conclude that $g'B_ng' \cap B_n = \varnothing$ and thus $g'Fg' \cap F = \varnothing$.
	\end{proof}
	
	\begin{proposition}\label{prop:prop_S_nope_is_vn}
		Let $G$ be a group without property (S). Then $G$ is virtually nilpotent.
	\end{proposition}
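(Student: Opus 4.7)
My plan is to argue by contrapositive: assume $G$ is not virtually nilpotent and deduce that $G$ has property (S). The whole proof will consist of stitching together the three ingredients that have just been assembled: the Duguid--McLane dichotomy, \Cref{lem:quot_prop_S} (property (S) lifts through quotients), and \Cref{lema:ICC_S} (ICC groups have property (S)).

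First I would dispose of the trivial case where $G$ is finite, since a finite group is virtually trivial and in particular virtually nilpotent, so nothing to prove. Assuming $G$ is infinite and finitely generated, I invoke the theorem of Duguid--McLane cited just above the statement: $G$ is either virtually nilpotent, in which case we are done, or $G$ admits a quotient $H$ with the ICC property.

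In the remaining case, \Cref{lema:ICC_S} tells us that $H$ has property (S). Then \Cref{lem:quot_prop_S} transfers property (S) from $H$ back to $G$, so $G$ itself has property (S), contradicting the hypothesis. This finishes the contrapositive and proves the proposition.

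There is no real obstacle here; the proof is essentially a one-line chain of implications once the previous lemmas and the Duguid--McLane dichotomy are in place. The only subtlety worth flagging is the implicit standing assumption that $G$ is finitely generated (needed both for the dichotomy and for the word-metric argument in \Cref{lema:ICC_S}); outside this scope the statement needs no modification because finite groups are automatically virtually nilpotent.
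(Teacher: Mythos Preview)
Your proof is correct and follows essentially the same route as the paper: contrapositive via the Duguid--McLane dichotomy, then \Cref{lema:ICC_S} to give the ICC quotient property~(S), then \Cref{lem:quot_prop_S} to lift it back to $G$. Your added remarks about the finite case and the implicit finite-generation hypothesis are reasonable housekeeping but not needed in the paper's terse version.
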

	
	\begin{proof}
		If $G$ is not virtually nilpotent, then it admits a quotient with the ICC property. By~\Cref{lema:ICC_S} it follows that $G$ admits a quotient with property (S). By~\Cref{lem:quot_prop_S}, we conclude that $G$ has property (S).
	\end{proof}
	
	As every finitely generated virtually nilpotent group has decidable word problem,~\Cref{thm:amenable_is_nope} follows from~\Cref{prop:prop_S_nope_is_vn} and the considerations above.

\section{Growing trees in non-amenable groups}\label{sec:trees}

Let $\kappa \geq 2$ be an integer. It is well known that a group $N$ is non-amenable if and only if one can find a finite subset $K\Subset N$ and a $\kappa$-to-$1$ surjective map $\varphi \colon N \to N$ such that $g^{-1}\varphi(g) \in K$ for every $g \in N$, see for instance~\cite[Theorem 4.9.2]{ceccherini-SilbersteinC09}. The main observation from this section, and which was the main tool in~\cite{BaSaSa_2021}, is that the space of all such maps, which can be thought of as a parametrization of a set of paradoxical decompositions of $N$, can be encoded as a subshift of finite type.

The purpose of this section is to exploit this observation to construct a subshift of finite type on which every configuration encodes a map defined by local rules which associates to every element of the group a binary tree, and the binary trees are pairwise disjoint. This will enable us to treat a non-amenable group $N$ as if each element were an infinite binary tree.
More precisely, endow both $N$ and $\{\symb{0},\symb{1}\}^{*}$ with the discrete topology, the following result is the main technical tool of our reduction.

\begin{lemma}\label{lem:arbolitos}
	Let $N$ be a non-amenable group. There exists a nonempty $N$-subshift of finite type $\mathbf{T}$ and continuous maps $\texttt{root}, \texttt{son}_0, \texttt{son}_1$ from $\mathbf{T}$ to a finite subset of $N$. These maps induce a continuous map $\gamma \colon \mathbf{T} \times \{\symb{0},\symb{1}\}^{*}\times N \to N$ which is defined recursively by:
 
    \begin{itemize}
		\item $\gamma(\tau,\epsilon,g) = g \cdot \texttt{root}(g^{-1}\tau)$ for every $g\in N$, $\tau \in \mathbf{T}$.
		\item $\gamma(\tau,wb,g) = \gamma(\tau,w,g) \cdot \texttt{son}_b( (\gamma(\tau,w,g))^{-1}\tau)$, for every $g \in N$,  $\tau \in \mathbf{T}$, $w \in \{0,1\}^*$ and $b \in \{0,1\}$.
	\end{itemize}

The map $\gamma\colon \mathbf{T}\times \{\symb{0},\symb{1}  \}^* \times N \to N$ is called the \define{tree map} and satisfies the following two properties:
 
	\begin{enumerate}
		\item for every $\tau \in \mathbf{T}$, $w\in \{\symb{0},\symb{1}\}^{*}$ and $g \in N$, we have $\gamma(\tau,w,g) = g\gamma(g^{-1}\tau,w,1_{N})$.
		\item for every $\tau \in \mathbf{T}$ the map $\gamma_{\tau}\colon \{0,1\}^{*}\times N \to N$ is injective, where $\gamma_{\tau}(w,g) = \gamma(\tau,w,g)$ for every $w\in \{0,1\}^*$ and $g \in N$.
	\end{enumerate}
\end{lemma}

The intuition is that the map \texttt{root} gives the root element of the tree associated to a given node, while $\texttt{son}_0, \texttt{son}_1$ give the directions to go down the tree. 

\subsection{The binary tree shift} For the remainder of the section, fix a non-amenable group $N$ and a finite symmetric set $K\Subset N$ such that there exists a $3$-to-$1$ map $\varphi \colon N \to N$ such that $g^{-1}\varphi(g)\in K$ for every $g \in N$. We also fix the finite set $A = K^4\times \{\symb{0},\symb{1},\symb{2}\}$. For $a = (k_0,k_1,k_2,k_3,i)\in A$, we write $s_0(a)=k_0, s_1(a)=k_1, s_2(a)=k_2, p(a)=k_3$ and  $c(a)=i$.

\begin{definition}
	 The binary tree shift $\mathbf{T}\subset A^{N}$ is the set of all configurations $\tau\in A^N$ which satisfy the following two constraints for every $g \in N$:
	\begin{enumerate}
		\item if we let $k = p(\tau(g))$ and $i = c(\tau(g))$, then $s_i(\tau(gk)) = k^{-1}$.
		\item For every $i \in \{\symb{0},\symb{1},\symb{2}\}$ if we have $s_i(\tau(g))=k$ then $p(\tau(gk))=k^{-1}$ and $c(\tau(gk)) = i$.
	\end{enumerate}
\end{definition}

The elements $s_i(a)$ are locally encoding the preimages of a $3$-to-$1$ map, while $p(a)$ locally encodes the map itself. The colour $c(a)$ is essentially partitioning the group in three disjoint sets such that the restriction maps are bijections. The two rules above are simply encoding the fact that this local information is consistent.

It is clear that $\mathbf{T}$ is an $N$-subshift of finite type. We argue that $\mathbf{T}$ it is nonempty. Let $\varphi \colon N \to N$ be a $3$-to-$1$ map such that $g^{-1}\varphi(g)\in K$ for every $g \in N$. Partition $N$ as a disjoint union $N_0 \cup N_1 \cup N_2$ such that for every $i \in \{0,1,2\}$ the map $\varphi_i \colon N_i \to N$ given by $\varphi_i = \varphi|_{N_i}$ is a bijection. For $g \in N$ let $i_g \in \{0,1,2\}$ such that $g \in N_{i_g}$. Define $\tau \in A^{N}$ through \[ \tau(g) = (g^{-1}\varphi_0^{-1}(g),g^{-1}\varphi_1^{-1}(g),g^{-1}\varphi_2^{-1}(g),g^{-1}\varphi(g),i_g) \mbox{ for every } g \in N.  \]
From our choice that $K$ is symmetric it follows that $\tau(g) \in A$. It is clear from this construction that $\tau$ satisfies both constraints from the definition and thus $\mathbf{T}$ it is nonempty.

Now we define the three local maps from~\Cref{lem:arbolitos}. Let $\tau \in \mathbf{T}$ and let $a = \tau(1_{N})$ be its value at the identity of $N$. Recall that we write $a = (s_0(a),s_1(a),s_2(a),p(a),c(a))$.
\begin{enumerate}
	\item The map $\texttt{root}\colon \mathbf{T} \to K$ is given by \[ \texttt{root}(\tau) = s_{c(a)+2 \bmod{3}}(a).  \]
	\item The maps $\texttt{son}_b\colon \mathbf{T} \to K$ for $b \in \{0,1\}$ are given by \[ \texttt{son}_i(\tau) = s_{c(a)+b \bmod{3}}(a).  \]
\end{enumerate}

Now that these local maps are defined, let $\gamma\colon \mathbf{T}\times \{\symb{0},\symb{1}  \}^* \times N \to N$ be defined as in the statement of~\Cref{lem:arbolitos}. The construction of $\gamma$ is illustrated in~\Cref{fig:arbolitos}. Continuity of $\gamma$ is immediate from the locality of the maps $\texttt{root}$, $\texttt{son}_0$ and $\texttt{son}_1$. Furthermore, we remark that we may traverse the tree using local information. Namely, if $w = ub$ for some $u \in \{\symb{0},\symb{1}  \}^*$, we have the relations $\gamma(\tau,u,g) = \gamma(\tau,w,g)p(\tau(\gamma(\tau,w,g)))$ and $\gamma(\tau,\epsilon,g)p(\gamma(\tau,\epsilon,g)) = g$.

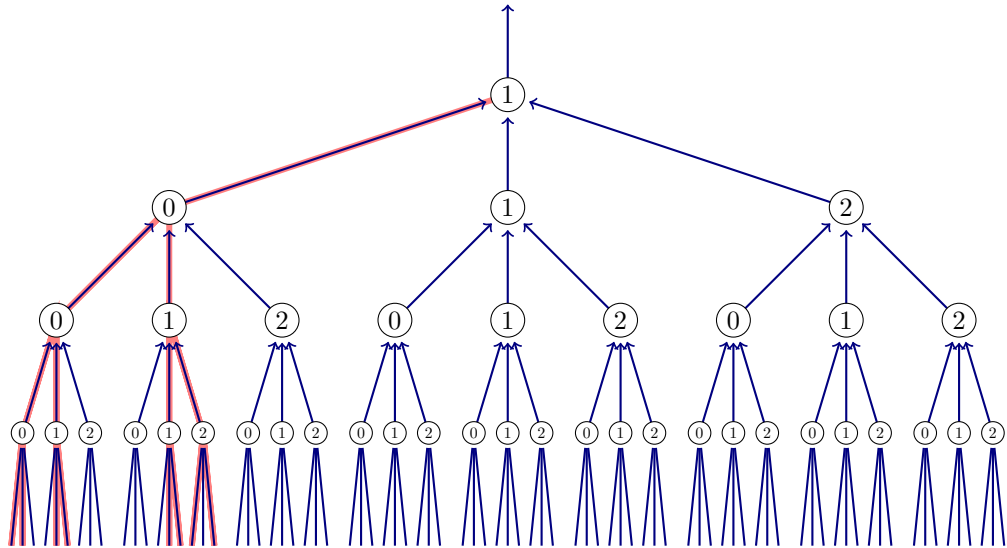
\begin{figure}[ht!]
	\centering\begin{tikzpicture}[scale=1]
		\begin{scope}[scale = 0.75, shift={(0,0)},rotate=0]
			\draw[line width=0.8mm,red, opacity = 0.5, cap=round] (-6,-2) -- (-8,-4) -- (-8.6,-6) -- (-8.8,-8) -- (-8.6,-6) -- (-8.6,-8) -- (-8.6,-6) -- (-8,-4) -- (-8,-6) -- (-8,-8) -- (-8,-6) -- (-7.8,-8) -- (-8,-6) -- (-8,-4) -- (-6,-2) -- (-6,-4) -- (-6,-6) -- (-6,-8) -- (-6,-6) -- (-5.8, -8) -- (-6,-6) --(-6,-4) -- (-5.4,-6) -- (-5.2,-8) -- (-5.4,-6) -- (-5.6,-8) -- (-5.4,-6) -- (-6,-4);
            \draw[line width=0.5mm,green!50!black, opacity = 1, cap=round, ->] (-0.3,0.3) to (-5.7,-1.5);
            \node[rotate = 20] at (-3,-0.2) {{\color{green!50!black}$\texttt{root}$}};
			\draw [<-, blue!50!black, thick, shorten <= 0.3cm] (0,2) to (0,0);
			\draw [<-, blue!50!black, thick, shorten <= 0.3cm] (0,0) to (-6,-2);
			\draw [<-, blue!50!black, thick, shorten <= 0.3cm] (0,0) to (0,-2);
			\draw [<-, blue!50!black, thick, shorten <= 0.3cm] (0,0) to (6,-2);
			
			\draw [<-, blue!50!black, thick, shorten <= 0.3cm] (-6,-2) to (-8,-4);
			\draw [<-, blue!50!black, thick, shorten <= 0.3cm] (-6,-2) to (-6,-4);
			\draw [<-, blue!50!black, thick, shorten <= 0.3cm] (-6,-2) to (-4,-4);
			
			\draw [<-, blue!50!black, thick, shorten <= 0.3cm] (0,-2) to (-2,-4);
			\draw [<-, blue!50!black, thick, shorten <= 0.3cm] (0,-2) to (0,-4);
			\draw [<-, blue!50!black, thick, shorten <= 0.3cm] (0,-2) to (2,-4);
			
			\draw [<-, blue!50!black, thick, shorten <= 0.3cm] (6,-2) to (4,-4);
			\draw [<-, blue!50!black, thick, shorten <= 0.3cm] (6,-2) to (6,-4);
			\draw [<-, blue!50!black, thick, shorten <= 0.3cm] (6,-2) to (8,-4);
			
			\foreach \i in {-8,-6,-4,-2,0,2,4,6,8}{
				\foreach \j in {-0.6,0,0.6}{
					\draw [<-, blue!50!black, thick, shorten <= 0.3cm] (\i,-4) to (\i-\j,-6);
					\draw [-, blue!50!black, thick, shorten <= 0.2cm] (\i-\j,-6) to (\i-\j-0.2,-8);
					\draw [-, blue!50!black, thick, shorten <= 0.2cm] (\i-\j,-6) to (\i-\j,-8);
					\draw [-, blue!50!black, thick, shorten <= 0.2cm] (\i-\j,-6) to (\i-\j+0.2,-8);
			}
				\draw[fill = white] (\i-0.6,-6) circle (0.2);
				\node at (\i-0.6,-6) {\scalebox{0.6}{$0$}};
				\draw[fill = white] (\i,-6) circle (0.2);
				\node at (\i,-6) {\scalebox{0.6}{$1$}};
				\draw[fill = white] (\i+0.6,-6) circle (0.2);
				\node at (\i+0.6,-6) {\scalebox{0.6}{$2$}};
			}

			\draw[fill = white] (0,0) circle (0.3);
			\node at (0,0) {$1$};
			\draw[fill = white] (-6,-2) circle (0.3);
			\node at (-6,-2) {$0$};
			\draw[fill = white] (0,-2) circle (0.3);
			\node at (0,-2) {$1$};
			\draw[fill = white] (6,-2) circle (0.3);
			\node at (6,-2) {$2$};
			\draw[fill = white] (-8,-4) circle (0.3);
			\node at (-8,-4) {$0$};
			\draw[fill = white] (-6,-4) circle (0.3);
			\node at (-6,-4) {$1$};
			\draw[fill = white] (-4,-4) circle (0.3);
			\node at (-4,-4) {$2$};
			\draw[fill = white] (-2,-4) circle (0.3);
			\node at (-2,-4) {$0$};
			\draw[fill = white] (0,-4) circle (0.3);
			\node at (0,-4) {$1$};
			\draw[fill = white] (2,-4) circle (0.3);
			\node at (2,-4) {$2$};
			\draw[fill = white] (4,-4) circle (0.3);
			\node at (4,-4) {$0$};
			\draw[fill = white] (6,-4) circle (0.3);
			\node at (6,-4) {$1$};
			\draw[fill = white] (8,-4) circle (0.3);
			\node at (8,-4) {$2$};

		\end{scope}
	\end{tikzpicture}
	\caption{The terniary structure induced by $\mathbf{T}$. The three incoming arrows represent $s_1(a),s_2(a),s_3(a)$, the outgoing arrow represents $p(a)$ and the number $c(a)$. We mark in red the binary tree associated to the topmost element according to $\gamma$. We can see that the choice in the definition of $\gamma$ forces the binary trees to be pairwise disjoint.}
	\label{fig:arbolitos}
\end{figure}

We end this section with a verification that $\gamma$ satisfies both conditions of~\Cref{lem:arbolitos}.

\begin{proof}[Proof of~\Cref{lem:arbolitos}]
	Let $\tau \in \mathbf{T}$, $w \in \{\symb{0},\symb{1}  \}^*$ and $g \in N$. We check condition (1) by induction on the size of $w$. Note first that $(g^{-1}\tau)(1_{N}) = \tau(g)$, this immediately implies that $\gamma(\tau,\epsilon,g) = g\gamma(g^{-1}\tau,\epsilon,1_{N})$. Now write $w = ub$ for some $b \in \{\symb{0},\symb{1}\}$ and suppose that $\gamma(\tau,u,g) = g\gamma(g^{-1}\tau,u,1_{N})$. We have that \[\tau(\gamma(\tau,u,g)) = \tau(g\gamma(g^{-1}\tau,u,1_{N})) = g^{-1}\tau (\gamma(g^{-1}\tau,u,1_{N})).\]
	From this and the definition of $\gamma$ it follows that $\gamma(\tau,w,g) = g\gamma(g^{-1}\tau,w,1_{N})$.
	
	Let us now show condition (2). Let $g,g' \in N$ and $w,w' \in \{\symb{0},\symb{1}  \}^*$ such that $\gamma(\tau,w,g) = \gamma(\tau,w',g')$. If neither $w$ nor $w'$ is the empty word, we write $w = ub$ and $w'=u'b'$ for $b,b' \in \{\symb{0},\symb{1}\}$. From this assumption and the relation $\gamma(\tau,u,g) = \gamma(\tau,w,g)p(\tau(\gamma(\tau,w,g)))$, we obtain that $\gamma(\tau,u,g) = \gamma(\tau,u',g')$. Iterating this procedure we may assume without loss of generality that $w' = \epsilon$.
	
	Suppose then that we have $\gamma(\tau,w,g) = \gamma(\tau,\epsilon,g')$ for some $w \in \{\symb{0},\symb{1}  \}^*$. Let $h = \gamma(\tau,\epsilon,g')$ and $a = \tau(h)$. By definition of $\gamma(\tau,\epsilon,g')$ we have that \[ c(\tau(h)) +1= c(\tau(hp(\tau(h))) \bmod{3}. \]
	If $w$ were nonempty, we would instead have that either \[ c(\tau(h)) = c(\tau(hp(\tau(h)))  \mbox{ or } c(\tau(h))+2 = c(\tau(hp(\tau(h))) \bmod{3}. \]
	From where we conclude that $w = \epsilon$ as well. Finally, if $h = \gamma(\tau,\epsilon,g) = \gamma(\tau,\epsilon,g')$, we can multiply on the right by $p(h)$ to conclude that $g = g'$. This shows condition (2).
\end{proof}

\subsection{Reduction of the main result to the case $H \times \{0,1\}^*$} Now we shall employ~\Cref{lem:arbolitos} to reduce the proof of~\Cref{thm:main_theorem} to the construction of a configuration space in $H \times \{0,1\}^*$. For that we will need to provide an ad-hoc extension of the definition of subshift of finite type for the direct product $M = H \times \{0,1\}^*$ of some group $H$ with the rooted free monoid $\{0,1\}^{*}$.

Let $A$ be a finite set. For some finite subset $F\Subset M$, a function $p \colon F \to A$ is called a \define{pattern} and $F = \operatorname{supp}(p)$ is called its \define{support}.

\begin{definition}\label{def:rootedSFT}
	Let $H$ be a group and consider the direct product $M = H \times \{0,1\}^*$. Let $A$ be a finite set. Given $A_0 \subset A$ and a finite set of forbidden patterns $\mathcal{F}$, the \define{rooted SFT} induced by $A_0$ and $\mathcal{F}$ is the subset $X\subset A^{M}$ which satisfies that $x \in X$ if and only if 
	\begin{enumerate}
		\item For every $g \in M$ and $p \in \mathcal{F}$ there exists $m \in \operatorname{supp}(p)$ such that $x(gm) \neq p(m)$.
		\item For every $h \in H$, we have that $x(h,\epsilon) \in A_0$.
	\end{enumerate}
\end{definition}

The first condition states that forbidden patterns cannot occur anywhere in $M$. The second condition restricts the symbols that may occur in any coordinate in which the second coordinate is the root (the empty word) in $\{0,1\}^*$.

As usual with SFTs, we can describe forbidden patterns implicitly through invariant rules. For instance, if we say: ``let $n \in M$, for every $m \in M$ if $x(m) = a$ then $x(mn)=b$'', this can be encoded as the set of forbidden patterns $p$ with support $\{1_M,n\}$ such that $p(1_M)=a$ and $p(n)\neq b$.

\begin{definition}\label{def:subfactors}
	Let $A,B$ be finite sets, $X\subset A^H$ and $Z \subset B^{M}$. We say that $Z$ \define{root-factors onto} $X$ if there is a map $\Phi\colon B \to A$ such that if we let $\phi\colon Z \to A^{H}$ be given by $\phi(z)(h) = \Phi(z(h,\epsilon))$ for every $h \in H$, then $\phi(Z) = X$.
\end{definition}

Next we state the lemma that will enable us to restrict our attention to the monoid $H \times \{0,1\}^*$.

\begin{lemma}\label{lem:reduction}
	Let $H$ be a finitely generated group and suppose that $Z$ is a rooted SFT on $M= H \times \{0,1\}^*$ which root-factors onto an $H$-subshift $X$. Then for every non-amenable group $N$ the free extension of $X$ to $G = H \times N$ is a sofic subshift. 
\end{lemma}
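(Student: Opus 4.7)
Fix an effectively closed $H$-subshift $X$. The hypothesis provides a rooted SFT $Z \subset B^M$ on $M = H \times \{0,1\}^*$ with seed alphabet $B_0 \subset B$, a finite set of forbidden patterns, and a map $\Phi \colon B \to A$ such that the $1$-block map $\phi(z)(h) = \Phi(z(h,\epsilon))$ satisfies $\phi(Z) = X$. Invoking~\Cref{lem:arbolitos}, I would also fix the binary tree shift $\mathbf{T}$ on $N$, its local maps $\texttt{root}, \texttt{son}_0, \texttt{son}_1$, and the tree map $\gamma_\tau$, defining the pairwise disjoint trees $T_g := \gamma_\tau(\{0,1\}^* \times \{g\}) \subset N$ (disjointness coming from the injectivity of $\gamma_\tau$).

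The plan is to build a $G$-SFT $Y$ on $G = H \times N$ whose alphabet is the product of the alphabet of $\mathbf{T}$ with $B$, and whose rules impose: (i) the $\mathbf{T}$-layer is constant in the $H$-direction and forms a valid $\mathbf{T}$-configuration in the $N$-direction; (ii) the $B$-layer, transported along each tree $T_g$ via $\gamma_\tau$, forms a valid $Z$-configuration, with the seed constraint $z(h,\epsilon)\in B_0$ imposed at each tree's root. Because $\texttt{root}, \texttt{son}_0, \texttt{son}_1$ take values in a fixed finite subset of $N$ and depend only on a bounded neighborhood of $\tau$, each $Z$-forbidden pattern translates into finitely many $G$-local forbidden patterns, and being a root of one's tree is a local condition on $\tau$. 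Hence $Y$ is a genuine $G$-SFT, and I would set $\pi \colon Y \to A^G$ to be the $1$-block map extracting the $B$-coordinate and applying $\Phi$.

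The heart of the argument is then to show $\pi(Y) = \widetilde{X}$. For $\widetilde{X} \subset \pi(Y)$, given $x \in \widetilde{X}$ I would fix any $\tau \in \mathbf{T}$ and, on each tree $T_g$, lift the columns $\{x|_{H \times \{\gamma_\tau(w,g)\}}\}_{w \in \{0,1\}^*}$ to a $Z$-configuration whose root fiber $\phi$-projects back to $x|_{H \times \{\gamma_\tau(\epsilon,g)\}}$, thereby assembling a preimage of $x$ inside $Y$. The reverse inclusion reduces to verifying that every column $\pi(y)|_{H \times \{n\}}$ is a configuration of $X$.

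The main obstacle is establishing this for \emph{every} coset, not only those that happen to be tree roots under $\tau$. At a root position $n = \gamma_\tau(\epsilon,g)$ the $B_0$-constraint together with the $Z$-SFT rules forces $\pi(y)|_{H \times \{n\}} = \phi(z_g) \in X$, where $z_g$ is the $Z$-configuration assigned to the tree $T_g$; at a non-root position the root-factor property alone gives no analogous guarantee. Overcoming this hinges on the particular construction of the rooted SFT $Z$ carried out in~\Cref{sec:mainthm}, which is designed so that every fiber $H \times \{w\}$ of any $Z$-configuration $\Phi$-projects to a configuration of $X$, not only the root $w = \epsilon$. Equipped with this enhanced feature, every column of $\pi(y)$ lies in $X$, so $\pi(Y) = \widetilde{X}$ and the free extension is sofic.
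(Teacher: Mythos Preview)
Your proposal has a genuine gap: the factor map you choose is the wrong one, and the fix you propose does not repair it.

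You define $\pi$ as the $1$-block map $\pi(y)(h,n)=\Phi(y_B(h,n))$, reading the $B$-symbol directly at $(h,n)$. With this choice, both directions of $\pi(Y)=\widetilde{X}$ fail under the hypothesis of the lemma. For the inclusion $\pi(Y)\subset\widetilde{X}$, you correctly note that at a non-root position $n=\gamma_\tau(w,g)$ with $w\neq\epsilon$ the root-factor hypothesis says nothing about $\Phi(z_g(\cdot,w))$. Your proposed remedy is to invoke the specific construction of \Cref{sec:mainthm}; but the lemma is a general reduction whose proof must use only the root-factor hypothesis, not properties of a particular $Z$ built later. Worse, even granting that extra feature, surjectivity breaks: in the construction of \Cref{sec:mainthm} the alphabet layer is constant along $\{0,1\}^*$, so every fiber of a single $z_g$ projects to the \emph{same} element of $X$. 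Hence for $x\in\widetilde{X}$ whose restrictions to the cosets $H\times\{\gamma_\tau(w,g)\}$ vary with $w$, no choice of $z_g$ can satisfy $\Phi(z_g(h,w))=x(h,\gamma_\tau(w,g))$ for all $w$, and your lift cannot produce a preimage. You also do not say what the $B$-layer should carry at positions $n$ lying in no tree $T_g$ (the map $\gamma_\tau$ need not be surjective), nor how $\pi$ behaves there.

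The paper avoids all of this by changing the factor map rather than strengthening the hypothesis. It sets
\[
\psi(\tau,y)(h,n)=\Phi\bigl(y(h,\gamma(\tau,\epsilon,n))\bigr)=\Phi\bigl(y(h,\,n\cdot\texttt{root}(n^{-1}\tau))\bigr),
\]
i.e.\ the value at $(h,n)$ is read from the \emph{root} of the tree attached to $n$. Since $\texttt{root}$ takes values in a fixed finite set and depends only on $\tau(n)$, this is still a sliding block code. Now every column $\psi(\tau,y)|_{H\times\{n\}}$ is by construction the $\epsilon$-fiber of some $z\in Z$, so the root-factor hypothesis alone gives $\psi(Y)\subset\widetilde{X}$. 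For surjectivity one only needs, for each $g$, some $z_g\in Z$ whose root fiber projects to $x|_{H\times\{\gamma_\tau(\epsilon,g)\}}$, which is exactly what root-factoring provides; the unreachable part of $N$ is handled by adjoining a placeholder symbol $\ast$ and a local rule propagating it along the tree edges. The key idea you are missing is this shift of the factor map from ``read here'' to ``read at my root''.
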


\begin{proof}
Fix a nonamenable group $N$ and let $Z\subset B^M$ be the rooted SFT which root-factors onto $X$. Let $B_0 \subset B$ be its seed alphabet, $\mathcal{F}$ a set of forbidden patterns which defines it and $\Phi \colon B \to A$ the associated root-factor map.

Let $\mathbf{T}\subset A^N$ be the binary tree shift on $N$. Let $\texttt{root}, \texttt{son}_0, \texttt{son}_1$ and $\gamma \colon \mathbf{T}\times \{0,1\}^* \times N \to N$ be the continuous maps from~\Cref{lem:arbolitos}. Consider its trivial extension $\widetilde{\mathbf{T}}$ to $G = H \times N$
\[ \widetilde{\mathbf{T}} = \{ x\in A^{H \times N} : x|_{\{1_H\} \times N} \in \mathbf{T} \mbox{ and } x(h,n) = x(1_H,n) \mbox{ for every } h \in H   \}.  \]
As $H$ is finitely generated, it follows that $\widetilde{\mathbf{T}}$ is a $G$-SFT. Also note that all of the previous maps extend naturally to $\widetilde{\mathbf{T}}$. For instance, $\gamma$ extends to a continuous map $\widetilde{\gamma} \colon \widetilde{\mathbf{T}}\times \{0,1\}^* \times N \to N$ by letting $\widetilde{\gamma}(\widetilde{\tau},w,h) = \gamma(\tau,w,h)$ with $\widetilde{\tau}|_{\{1_H\}\times N} = \tau$. In order to reduce the encumbrance of notation, we shall keep the original names of these maps. 

Let us briefly explain the intuition of the remainder of the proof. We wish to construct a subshift of finite type on the product space $\widetilde{\mathbf{T}} \times B^G$ in the following way: the configuration $\tau \in \widetilde{\mathbf{T}}$ will associate to every $n \in N$ an infinite binary tree, and we will use the underlying rules to associate a set of forbidden patterns which force every induced copy of $H \times \{0,1\}^*$ to contain a copy of $Z$. However, notice that the maps $\gamma_{\tau} \colon \{0,1\}^* \times N \to N$ are not necessarily surjective, and thus there are portions of $N$ which carry either infinite unrooted binary trees, or degenerate trees (when the underlying component of the $3$-to-$1$ map ends in a cycle). In order to make sure we are able to correctly produce a non-empty subshift, we shall introduce an extra symbol $\ast$ which will serve the purpose of filling those degenerate portions of $N$.

Now we proceed formally, let $\ast \notin B$ and let $C = B \cup \{\ast\}$. We define the subshift $Y\subset \widetilde{\mathbf{T}} \times C^G$ as the set of configurations $(\tau,y)$ in $\widetilde{\mathbf{T}} \times C^G$ which satisfy the following extra rules. For every $(h,n)\in H \times N$, we have
\begin{enumerate}
	\item \define{root condition}: if we let $r = n\cdot \texttt{root}((h,n)^{-1}\tau)$ then $y(h,r) \in B_0$. 
	\item \define{pattern condition} for every $p \in \mathcal{F}$ there exists $(g,w) \in \operatorname{supp}(p)$ such that \[y(hg,\gamma(\tau,w,n)) \neq p(g,w).\]
	\item \define{extra symbol condition} let $n_0 = n\cdot \texttt{son}_0((h,n)^{-1}\tau)$ and $n_1 = n\cdot \texttt{son}_1((h,n)^{-1}\tau)$. We have that $y(h,n) = \ast \iff y(h,n_0)=\ast$ and that $y(h,n) = \ast \iff y(h,n_1)=\ast$.
\end{enumerate}

As the maps $\texttt{root}, \texttt{son}_0, \texttt{son}_1$ and $\gamma$ are continuous, they depend only on finitely many coordinates of $\tau$. Furthermore, as $\mathcal{F}$ is finite, there are only finitely many $w$ in the second condition. This means that the three rules above are local and can be implement with finitely many forbidden patterns. This shows that $Y$ is a $G$-SFT.

We now define a topological factor map $\psi$ from $Y$ to the free extension of $X$ as follows, for every $(\tau,y)\in Y$ we let \[ \psi(\tau,y)(h,n) = \Phi( y(h, \gamma(\tau,\epsilon,n))).  \] 

From the fact that $n \cdot \texttt{root}((h,n)^{-1}\tau) = \gamma(\tau,\epsilon,n)$ it is clear that $\psi$ is continuous. From the identity $\gamma(\tau,\epsilon,n) = n\gamma(n^{-1}\tau,\epsilon,1_N)$ it follows that $\psi$ is $G$-equivariant. 

Recall that the surjective map $\phi \colon Z \to X$ is given by $\phi(z)(h) = \Phi(z(h,\epsilon))$. Fix $(\tau,y)\in Y$ and for each $n \in N$ let $z_n \in C^{M}$ be given by \[z_n(h,w) = y(h,\gamma(\tau,w,n))\]

By the root condition we have that $z_n(h,\epsilon)\in A_0$. In particular, as the symbol $\ast$ is not in $A_0$, using the extra symbol condition recursively we obtain that for every $w \in \{0,1\}^*$ it holds that $z_n(h,w) \neq \ast$ and thus in fact we have $z_n \in B^M$. Finally, the pattern condition ensures that no forbidden patterns occur in $z_n$ and thus $z_n \in Z$. It follows that \[\psi(\tau,y)(1_H,n) = \Phi(y(h,\gamma(\tau,\epsilon,n))) = \Phi(z_n(h,\epsilon)) = \phi(z_n)(h).\]

From where we obtain that $\psi$ is well defined, that is, the range of $\psi$ is contained in the free extension of $X$. In order to conclude our proof we just need to show that $\psi$ is surjective.

Let $\widetilde{x}$ be in the free extension of $X$ to $H \times N$. For each $n \in N$, let $x_n\in X$ be given by $x_n(h) = \widetilde{x}(h,n)$. As $\phi \colon Z \to X$ is surjective, for each $n$ we may choose and fix some $z_n \in Z$ such that $\phi(z_n) = x_n$.

As $N$ is nonamenable, the subshift $\widetilde{\mathbf{T}}$ is nonempty (for a suitable choice of $K$ in its definition) and thus there is some $\tau \in \widetilde{\mathbf{T}}$. Partition the group $N$ into $N = R_{\tau} \cup U_{\tau}$ where $m \in R_{\tau}$ if and only if there is $n \in N$ and $w \in \{0,1\}^*$ such that $m = \gamma(\tau,w,n)$. The names $R_{\tau}$ and $U_{\tau}$ stand for ``reachable'' and ``unreachable'' respectively. Notice that by~\Cref{lem:arbolitos} if $m = \gamma(\tau,w,n) \in R_{\tau}$, then the choice of $n$ and $w$ is unique.

Define $y \in C^G$ as follows. For every $(h,m) \in H \times N$, \[ y(h,m) = \begin{cases}
	z_n(h,w) & \mbox{ if } m = \gamma(\tau,w,n) \in R_{\tau} \\
	\ast & \mbox{ if } m \in U_{\tau}. 
\end{cases}    \]

A direct verification shows that $(\tau,y)\in Y$ and that $\psi(\tau,y) = \widetilde{x}$ and thus $\psi$ is surjective.

As $Y$ is a $G$-SFT and $\psi$ is a continuous, $G$-equivariant surjective map onto the free extension of $X$, it follows that said extension is a $G$-sofic subshift.\end{proof}

\section{Soficity of free extensions}\label{sec:mainthm}

The purpose of this section is to show~\Cref{thm:main_theorem}, namely, that for every nonamenable group $N$, every finitely generated group $H$ with decidable word problem, and every effectively closed $H$-subshift $X$, the free extension of $X$ to $H \times N$ is a sofic subshift.

The proof in the case where $H$ is finite is trivial. In view of~\Cref{lem:reduction}, it suffices to show that if $H$ is an infinite group with decidable word problem then for every effectively closed by patterns $H$-subshift $X$ there is a rooted SFT $Z$ on $H \times \{0,1\}^*$ which root-factors onto $X$. We remark that as the word problem of $H$ is decidable, effectively closed by patterns subshifts are effectively closed actions, and thus we will simply say ``effectively closed subshift''. For the remainder of the section we fix a finitely generated group $H$ with decidable word problem and an effectively closed $H$-subshift $X \subset A^{H}$. We also write $M = H \times \{0,1\}^*$.

Let us begin by explaining in very informal terms our construction of the rooted SFT $Z$. We will describe $Z$ as a subset of the product of five other structures which we will refer to as ``layers''. The alphabet layer $\boldsymbol{A}$, the directions layer $\boldsymbol{D}$, the branching layer $\boldsymbol{B}$, the computation layer $\boldsymbol{C}$ and the tentacle layer $\boldsymbol{T}$.

First we consider the full $H$-shift $A^H$ and extend it trivially to $M$, this is the alphabet layer $\boldsymbol{A}$. Second, we construct an SFT on $H$ on which every configuration induces a partition of $H$ into local copies of $\ZZ$ or one of its quotients. A theorem of Seward will ensure that there exists at least one configuration in which no proper quotients of $\ZZ$ occur. The directions layer $\boldsymbol{D}$ is the trivial extension of this space to $M$.

In the branching layer $\boldsymbol{B}$ we produce a structure suitable for parallel computation. Namely, we shall use the information from $\boldsymbol{D}$ and in each induced copy of $\ZZ \times \{0,1\}^*$ (or $\ZZ/n\ZZ \times \{0,1\}^*$) we will seed symbols in each position $(n,\epsilon)$ which will act as computation tapes of size $1$. Each time we advance on the tree, each computation nondeterministically chooses to move to one branch of the tree and extend its computation tape to the right by one place. As the number of possible branchings grows exponentially, there exist choices which enable each position to extend its own rooted computation tape to the right without ever colliding with the other tapes. This will also eliminate the possibility of witnessing any proper quotients of $\ZZ$ induced by $\boldsymbol{D}$. See~\Cref{fig:branchescomputation} for an illustration of this construction.

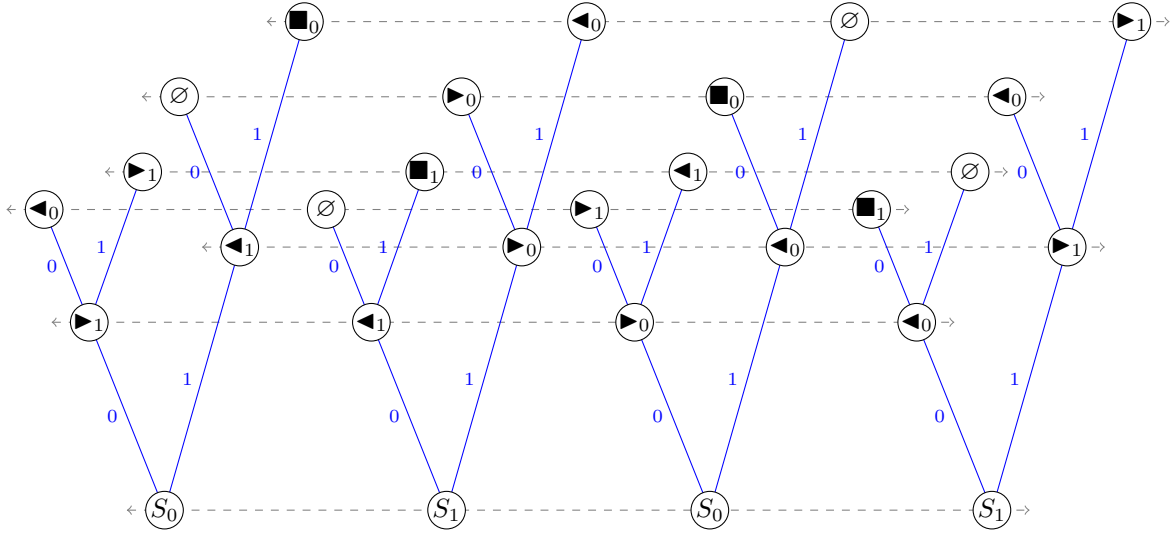
\begin{figure}[ht!]
	\begin{tikzpicture}
		\draw[<->, black!50, dashed] (-6,0) to (6,0);
		\draw[<->, black!50, dashed] (-7,2.5) to (5,2.5);
		\draw[<->, black!50, dashed] (-5,3.5) to (7,3.5);
		\draw[<->, black!50, dashed] (-7.6,4) to (4.4,4);
		\draw[<->, black!50, dashed] (-6.29,4.5) to (5.71,4.5);
		\draw[<->, black!50, dashed] (-5.8,5.5) to (6.2,5.5);
		\draw[<->, black!50, dashed] (-4.14,6.5) to (7.86,6.5);
		\begin{scope}[shift = {(-5.5,0)}]
			\draw [blue] (0,0) -- node [midway, left] {\scriptsize $0$} (-1,2.5) ;
			\draw [blue] (0,0) -- node [midway, left] {\scriptsize$1$} (1,3.5) ;
			\draw [blue] (-1,2.5) -- node [midway, left] {\scriptsize$0$} (-1.6,4);
			\draw [blue] (-1,2.5) -- node [midway, left] {\scriptsize$1$} (-0.29,4.5);
			\draw [blue] (1,3.5) -- node [midway, left] {\scriptsize$0$} (0.2,5.5);
			\draw [blue] (1,3.5) -- node [midway, left] {\scriptsize$1$} (1.86,6.5);
			\draw [fill = white ] (0,0) circle (0.25) node {$\texttt{S}_0$};
			\draw [fill = white ] (-1,2.5) circle (0.25) node {$\blacktriangleright_1$};
			\draw [fill = white ] (1,3.5) circle (0.25) node {$\blacktriangleleft_1$}; 
			\draw [fill = white ] (-1.6,4) circle (0.25) node {$\blacktriangleleft_0$}; 
			\draw [fill = white ] (-0.29,4.5) circle (0.25) node {$\blacktriangleright_1$}; 
			\draw [fill = white ] (0.2,5.5) circle (0.25) node {$\varnothing$}; 
			\draw [fill = white ] (1.86,6.5) circle (0.25) node {$\blacksquare_0$}; 
		\end{scope}
		\begin{scope}[shift = {(-1.75,0)}]
			\draw [blue] (0,0) -- node [midway, left] {\scriptsize $0$} (-1,2.5) ;
			\draw [blue] (0,0) -- node [midway, left] {\scriptsize$1$} (1,3.5) ;
			\draw [blue] (-1,2.5) -- node [midway, left] {\scriptsize$0$} (-1.6,4);
			\draw [blue] (-1,2.5) -- node [midway, left] {\scriptsize$1$} (-0.29,4.5);
			\draw [blue] (1,3.5) -- node [midway, left] {\scriptsize$0$} (0.2,5.5);
			\draw [blue] (1,3.5) -- node [midway, left] {\scriptsize$1$} (1.86,6.5);
			\draw [fill = white ] (0,0) circle (0.25) node {$\texttt{S}_1$};
			\draw [fill = white ] (-1,2.5) circle (0.25) node {$\blacktriangleleft_1$};
			\draw [fill = white ] (1,3.5) circle (0.25) node {$\blacktriangleright_0$};
			\draw [fill = white ] (-1.6,4) circle (0.25) node {$\varnothing$}; 
			\draw [fill = white ] (-0.29,4.5) circle (0.25) node {$\blacksquare_1$}; 
			\draw [fill = white ] (0.2,5.5) circle (0.25) node {$\blacktriangleright_0$};
			\draw [fill = white ] (1.86,6.5) circle (0.25) node {$\blacktriangleleft_0$}; 
		\end{scope}
		\begin{scope}[shift = {(1.75,0)}]
			\draw [blue] (0,0) -- node [midway, left] {\scriptsize $0$} (-1,2.5) ;
			\draw [blue] (0,0) -- node [midway, left] {\scriptsize$1$} (1,3.5) ;
			\draw [blue] (-1,2.5) -- node [midway, left] {\scriptsize$0$} (-1.6,4);
			\draw [blue] (-1,2.5) -- node [midway, left] {\scriptsize$1$} (-0.29,4.5);
			\draw [blue] (1,3.5) -- node [midway, left] {\scriptsize$0$} (0.2,5.5);
			\draw [blue] (1,3.5) -- node [midway, left] {\scriptsize$1$} (1.86,6.5);
			\draw [fill = white ] (0,0) circle (0.25) node {$\texttt{S}_0$};
			\draw [fill = white ] (-1,2.5) circle (0.25) node {$\blacktriangleright_0$};
			\draw [fill = white ] (1,3.5) circle (0.25) node {$\blacktriangleleft_0$};
			\draw [fill = white ] (-1.6,4) circle (0.25) node {$\blacktriangleright_1$}; 
			\draw [fill = white ] (-0.29,4.5) circle (0.25) node {$\blacktriangleleft_1$}; 
			\draw [fill = white ] (0.2,5.5) circle (0.25) node {$\blacksquare_0$}; 
			\draw [fill = white ] (1.86,6.5) circle (0.25) node {$\varnothing$}; 
		\end{scope}
		\begin{scope}[shift = {(5.5,0)}]
			\draw [blue] (0,0) -- node [midway, left] {\scriptsize $0$} (-1,2.5) ;
			\draw [blue] (0,0) -- node [midway, left] {\scriptsize$1$} (1,3.5) ;
			\draw [blue] (-1,2.5) -- node [midway, left] {\scriptsize$0$} (-1.6,4);
			\draw [blue] (-1,2.5) -- node [midway, left] {\scriptsize$1$} (-0.29,4.5);
			\draw [blue] (1,3.5) -- node [midway, left] {\scriptsize$0$} (0.2,5.5);
			\draw [blue] (1,3.5) -- node [midway, left] {\scriptsize$1$} (1.86,6.5);
			\draw [fill = white ] (0,0) circle (0.25) node {$\texttt{S}_1$};
			\draw [fill = white ] (-1,2.5) circle (0.25) node {$\blacktriangleleft_0$};
			\draw [fill = white ] (1,3.5) circle (0.25) node {$\blacktriangleright_1$}; 
			\draw [fill = white ] (-1.6,4) circle (0.25) node {$\blacksquare_1$}; 
			\draw [fill = white ] (-0.29,4.5) circle (0.25) node {$\varnothing$}; 
			\draw [fill = white ] (0.2,5.5) circle (0.25) node {$\blacktriangleleft_0$}; 
			\draw [fill = white ] (1.86,6.5) circle (0.25) node {$\blacktriangleright_1$}; 
		\end{scope}

	\end{tikzpicture}
	\caption{The computation branches in $\ZZ \times \{0,1\}^*$. Each computation zone is of the form $\blacktriangleright(\blacksquare)^*\blacktriangleleft$. The $\texttt{S}_i$ symbols are seeds and the $\varnothing$ symbols are placeholders for unused space. The numbers indicate the branch in which the computation will continue, note that the numbers are constant inside every computation zone.}
	\label{fig:branchescomputation}
\end{figure}

Next we use the information from the branching layer $\boldsymbol{B}$ to construct a computation layer $\boldsymbol{C}$ whose purpose is to simulate a Turing machine rooted on every position of $H$ that enumerates forbidden pattern codings for $X$. As soon as a forbidden pattern coding is produced, each word in its description will be replaced by a geodesic (using the algorithm for the word problem of the group). Next, a ``search'' subroutine will start, in which the computation layer $\boldsymbol{C}$ will locally interact with the tentacle layer $\boldsymbol{T}$; This tentacle layer grows ``tentacles'' from the leftmost position of every computation layer which follow the geodesic path and check if a symbol occurs in some position in $H$ and bring the information back to the computation layer. We will show that there is a choice of branchings such that no tentacles attached to distinct computations collide. The reason behind turning the words into geodesics is to ensure that the tentacles do not collide with themselves.

If either two computation tapes collide, two tentacles collide, or a forbidden pattern coding is found to occur in $A^H$, then we force the occurrence of a forbidden pattern. This will ensure that the configurations of $A^H$ which belong on $X$ are exactly those that occur as projections to the $A^H$-layer of configurations of $Z$, thus showing that $Z$ root-factors onto $X$.

\subsection{The alphabet layer}

Recall that $M = H \times \{0,1\}^*$. We define the alphabet layer $\boldsymbol{A}$ as the trivial extension to $M$ of the full $H$-shift on $H$, that is, \[ \boldsymbol{A} = \{ \alpha \in A^{M} : \alpha(h,w) = \alpha(h,\epsilon) \mbox{ for every } h \in H, w \in \{0,1\}^* \}.   \]

\subsection{The directions layer}

A bijection $T \colon H \to H$ is a \define{translation-like action} of $\ZZ$ on $G$ if it is free (for every $h \in H$, $T^k(h)= h$ implies $k = 0$) and there is $K\Subset H$ such that $h^{-1}T(h) \in K$ for every $h \in H$. A result of Seward~\cite[Theorem 1.4]{Seward2014} (see also~\cite{Carrascovargas2023geometric} for a computable version) shows that every infinite and finitely generated group $H$ admits a translation-like action of $\ZZ$. 

For the remainder of this section, fix a finite symmetric set $S$ of generators of $H$ which contains the identity and such that there exists a translation-like action $T\colon H \to H$ with $h^{-1}T(h) \in S$. Consider the alphabet $A_{\boldsymbol{D}}=S \times S$. For $a = (s,s')\in S \times S$ denote $\ell(a) = s$, $r(a)=s'$ where $\ell$ and $r$ stand for ``left'' and ``right''.

Consider the $H$-SFT $D$ as the space of all configurations $d \in (A_{\boldsymbol{D}})^{H}$ such that for any $h \in H$, \begin{enumerate}
	\item If $s = r(d(h))$ and $u = \ell(d(hs))$ then $s = u^{-1}$.
	\item If $s = \ell(d(h))$ and $u = r(d(hs))$ then $s = u^{-1}$.
\end{enumerate}

Both rules are local and thus $D$ is an $H$-SFT. Notice that each $d \in D$ induces a map $T_d \colon H \to H$ by letting $T_d(h) = h\cdot r(d(h))$ and the rules imply that $T_d$ is a bijection (its inverse is given by $T_d^{-1}(h) = h \cdot \ell(d(h))$). Remark that for every $h \in H$ we have $h^{-1}T_d(h) = r(d(h)) \in S$. Conversely, every bijection $T\colon H \to H$ with $h^{-1}T(h) \in S$ for every $h \in H$ induces a configuration $d_T \in D$ by letting $\ell(d_T(h)) = h^{-1}T^{-1}(h)$ and $r(d_T(h)) = h^{-1}T(h)$.

Let ${\boldsymbol{D}}$ be the trivial extension of ${D}$ to $M = H \times \{0,1\}^*$, that is \[ {\boldsymbol{D}} = \{ \delta \in (A_{\boldsymbol{D}})^{M} : \delta|_{H \times \{\epsilon\}} \in {D} \mbox{ and } \delta(h,w) = \delta(h,\epsilon) \mbox{ for every } h \in H, w \in \{0,1\}^*    \}.  \]

Clearly $\boldsymbol{D}$ is a rooted SFT (with no root condition at all). We call $\boldsymbol{D}$ the direction layer. Given $\delta \in \boldsymbol{D}$ and $h \in H$, we will use the notations $L_{\delta}(h) = h\ell(\delta(h,\epsilon))$ and $R_{\delta}(h) = hr(\delta(h,\epsilon))$ to denote the elements at the left and right of $h$ induced by $\delta$.

\subsection{The branching layer}

Now we will describe how to produce a branching structure of expanding computation zones which are seeded everywhere in $H$. Consider the alphabet \[A_{\boldsymbol{B}} = \{\varnothing \} \cup \left(\{ \texttt{S}, \blacktriangleleft, \blacksquare, \blacktriangleright\}  \times \{0,1\}  \right).  \]

Also consider the seed subalphabet as \[  A'_{\boldsymbol{B}} = \{\texttt{S}\}\times \{0,1\}.   \]

Unlike the layers so far (which were independent from each other), on the branching layer the local rules will depend upon information locally encoded in the configuration $\delta \in \boldsymbol{D}$. We write the rules with $\delta$ as a parameter, which are then straightforward to translate into forbidden patterns on the product alphabet.

We define $\boldsymbol{B}$ as the set of all $\beta \in (A_{\boldsymbol{B}})^M$ which satisfy the following rules for every $h \in H$ and $w \in \{0,1\}^*$,

\begin{enumerate}
	\item \textbf{seed rule:} we have that $\beta(h,\epsilon)\in A'_{\boldsymbol{B}}$.
	\item \textbf{seed growth:} for each $b \in \{0,1\}$, if $\beta(h,w) = (\texttt{S},b)$ then there is $c \in \{0,1\}$ such that $\beta(h,wb) = (\blacktriangleright,c)$ and $\beta(R_{\delta}(h),wb) = (\blacktriangleleft,c)$
	\item \textbf{horizontal consistency rules:} for each $b \in \{0,1\}$,
	\begin{enumerate}
		\item if $\beta(h,w) = (\blacktriangleright,b)$ then $\beta(R_{\delta}(h),w) \in \{ (\blacksquare,b), (\blacktriangleleft,b) \}$.
		\item if $\beta(h,w) = (\blacksquare,b)$ then $\beta(R_{\delta}(h),w) \in \{ (\blacksquare,b), (\blacktriangleleft,b) \}$ and $\beta(L_{\delta}(h),w) \in \{ (\blacktriangleright,b), (\blacksquare,b) \}$.
		\item if $\beta(h,w) = (\blacktriangleleft,b)$ then $\beta(L_{\delta}(h),w) \in \{ (\blacktriangleright,b), (\blacksquare,b) \}$.
	\end{enumerate}	
	\item \textbf{branching rules:} for each $b \in \{0,1\}$,
	\begin{enumerate}
		\item if $\beta(h,w) = (\blacktriangleright,b)$ then there is $c \in \{0,1\}$ such that $\beta(h,wb) = (\blacktriangleright,c)$.
		\item if $\beta(h,w) = (\blacksquare,b)$ then there is $c \in \{0,1\}$ such that $\beta(h,wb) = (\blacksquare,c)$.
		\item if $\beta(h,w) = (\blacktriangleleft,b)$ then there is $c \in \{0,1\}$ such that $\beta(h,wb) = (\blacksquare,c)$ and $\beta(R_{\delta}(h),wb) = (\blacktriangleleft,c)$.
	\end{enumerate}	
	\item \textbf{Computation symbols come from seeds:} If $|w|\geq 1$, write $w = ub$ for some $b \in \{0,1\}$. Unless $\beta(h,u) \in \{ (\blacktriangleright,b), (\blacksquare,b),(\blacktriangleleft,b), (\texttt{S},b)\}$ or $\beta(L_{\delta}(h),u) \in \{ (\blacktriangleleft,b), (\texttt{S},b)\}$, we have $\beta(h,w)=\varnothing$.
\end{enumerate}

Let us now explain the meaning of the alphabet and the rules. The symbols $\blacktriangleright,\blacksquare$ and $\blacktriangleleft$ encode ``computation zones''. Every computation zone is read from left to right in an induced copy of $\ZZ$ by $\delta$ and is of the form $\blacktriangleright(\blacksquare)^*\blacktriangleleft$. The symbols $\{0,1\}$ are branching bits, and encode into which branch of the infinite binary tree $\{0,1\}^*$ the tape should copy itself and extend to the right. The symbol $\texttt{S}$ is a seed which ensures that from that position a computation zone will start growing. Finally, the symbol $\varnothing$ is a placeholder symbol that can go anywhere (as long as the rules don't force another symbol in that place). 

The seed rule states that the seed symbol $\texttt{S}$ should occur at every position in $H$ with an arbitrary branch number $b \in \{0,1\}$. The seed growth rule forces the seed to move into the branch $b$ and spawn a computation zone of size $2$ of the form $\blacktriangleright\blacktriangleleft$. The horizontal consistency rules ensure that every computation zone shares the same branch bit $b \in \{0,1\}$ and that they are always of the form $\blacktriangleright(\blacksquare)^*\blacktriangleleft$. The branching rules state that every computation zone marked by the branching bit $b$, whose leftmost border is in position $(h,w)\in M$, must copy itself into $(h,wb)$ and extend itself to the right by one position. Finally, the last rule states that nonempty symbols can only occur when a computation zone in a previous level extends, that is, everything else is necessarily filled with the placeholder symbol $\varnothing$.

Clearly, all of the above rules are either seed rules or local rules, and thus the set of configurations $(\alpha,\delta,\beta) \in \boldsymbol{A}\times \boldsymbol{D} \times \boldsymbol{B}$ form a rooted SFT. 

The next remarks are not needed for our proof, but they do help to understand how this construction works so far.

\begin{remark}
	Although the seed rule says that the symbol $\texttt{S}$ occurs in every $(h,\epsilon)$ with an arbitrary branch number, the only possibility is that in every induced copy of $\ZZ$ (or a quotient) the branch numbers alternate between $0$ and $1$. Otherwise, the branching rule would enforce that two distinct symbols must appear at a single position. Strict alternation is not enforced on following steps, however: the tapes grow linearly but can separate at exponential rate, so we quickly start having many choices on the branch sequence.
\end{remark}

\begin{remark}
	For every $(\alpha,\delta,\beta) \in \boldsymbol{A}\times \boldsymbol{D} \times \boldsymbol{B}$, if we let $T_{\delta}\colon H \to H$ be the action $\ZZ \overset{T_{\delta}}{\curvearrowright} H$ given by $T_{\delta}(h) = hR_{\delta}(h)$ then $T_{\delta}$ is a translation-like action. Indeed, suppose there is $n >0$ such that $T_{\delta}^n(h) = h$. Define $b_0$ as the second coordinate of $\beta(h,\epsilon)$, and iteratively, for $i \in \{1,\dots,n\}$ let $b_i$ be the second coordinate of $\beta(h,b_0\dots b_{i-1})$. The seed growth rule implies that $\beta(h,b_0) = (\blacktriangleright,b_1)$, and iteratively, the branching rule a) forces that $\beta(h,b_0\dots b_i) = (\blacktriangleright,b_{i+1})$ for every $i \in \{1,\dots,n-1\}$. Similarly, the seed growth rule and the horizontal consistency rule implies that $\beta(T_{\delta}(h),b_0) = (\blacktriangleleft,b_1)$, and iteratively, the branching rule c) and the horizontal consistency rule force that $\beta(T^{i+1}_{\delta}(h),b_0\dots b_i) = (\blacktriangleleft,b_{i+1})$ for every $i \in \{1,\dots,n-1\}$. If we have $T^{n}(h) = h$, then \[(\blacktriangleleft,b_{n}) = \beta(h,b_{_0\dots b_{n-1}}) = (\blacktriangleright,b_{n})\]
	Which obviously cannot occur.
\end{remark}

\begin{remark}
	The rooted SFT $\boldsymbol{A}\times \boldsymbol{D} \times \boldsymbol{B}$ is non-empty. Our choice of generating set $S$ for $H$ ensures that there exists a translation-like action $T \colon H \to H$ with $h^{-1}T(h)\in S$ and thus $\delta$ defined by $\delta(h,w) = (h^{-1}T(h),h^{-1}T^{-1}(h))$ is an element of $\boldsymbol{D}$ with the property that $n \mapsto R^{n}_{\delta}(h)$ is injective for every $h \in H$. One way to construct a configuration $\beta \in \boldsymbol{B}$ compatible with $\delta$ is by assigning alternating branching values to the seeds in $H \times \{\epsilon\}$ in each induced copy of $\ZZ \times \{\epsilon\}$ and then iteratively doing the same in each computation zone in the following branches. A straightforward computation shows that this construction will ensure that there are $2^{|w|}-(|w|+1)$ unused spaces between each computation zone in a branch $w\in \{0,1\}^*$, which can be filled with the symbol $\varnothing$. In fact, it suffices to force alternating branching bits only on words with length $|w|=2^{k}-1$ for some $k \in \NN$.
\end{remark}

\subsection{The computation layer}

Recall that a Turing machine is a tuple $(Q,\Sigma,\delta,q_0,q_F)$ where $Q$ is a finite set of \define{states}, $\Sigma$ is a finite set called \define{alphabet}, $\delta \colon Q \times \Sigma \to Q \times \Sigma \times \{-1,0,1\}$ is a \define{transition function}, $q_0 \in Q$ is the \define{initial state} and $q_F\in Q$ is the \define{final state}. We shall also assume that there is a special \define{blank symbol} $\sqcup \in \Sigma$

Turing machines act (as monoids) on the space $\Sigma^{\ZZ}\times Q \times \ZZ$ by sending $(x,q,n)$ to $(y,r,m)$ where $\delta(q,x(n)) = (r,a,d)$, $m = n+d$, and $y \in \Sigma^{\ZZ}$ is such that $y(n)=a$ and $y_{\ZZ\setminus \{n\}} = x_{\ZZ \setminus \{n\}}$.

It is well-known that every Turing machine can be implemented in an equivalent way in a Turing machine which never uses the negative portion of the tape, that is, a Turing machine which acts on $\Sigma^{\NN}\times Q \times \NN$; and that their space-time diagrams can be implemented through Wang tilings. Here we give a slight adaptation of these classical constructions that match up well with the structure induced by the branching layer.

Fix a Turing machine $\mathcal{T}=(Q,\Sigma,\delta,q_0,q_F)$. We define the alphabet $A^{0}_{\mathcal{T}}$ as the set of squares with colored edges illustrated on~\Cref{fig:wangtiles}.

\begin{figure}[ht!]

\begin{tikzpicture}[scale =1.8]
		\begin{scope}[shift = {(0.75,1.5)}]
			\node at (0,-0.65) {\textbf{$\texttt{seed}$}};
			\seed
		\end{scope}
		
		\begin{scope}[shift = {(2.25,1.5)} ]
			\node at (0,-0.65) {\textbf{$\texttt{seed\_L}$}};
			\seedL
		\end{scope}
		
		\begin{scope}[shift = {(3.75,1.5)} ]
			\node at (0,-0.65) {\textbf{$\texttt{Left}$}};
			\lefttile
		\end{scope}
		\begin{scope}[shift={(0.75,0)}]
		\begin{scope}[shift = {(-1.5,0)} ]
			\node at (0,-0.65) {\textbf{$\texttt{seed\_R}$}};
			\seedR
		\end{scope}
		
		\begin{scope}[shift = {(0,0)} ]
			\node at (0,-0.65) {\textbf{$\texttt{Mid}$}};
			\midtile
		\end{scope}
		
		\begin{scope}[shift = {(1.5,0)} ]
			\node at (0,-0.65) {\textbf{$\texttt{Right}$}};
			\righttile
		\end{scope}
		
		\begin{scope}[shift = {(3,0)} ]
			\node at (0,-0.65) {\textbf{$\texttt{Origin}$}};
			\origintile
		\end{scope}
		
		\begin{scope}[shift = {(4.5,0)} ]
			\node at (0,-0.65) {\textbf{$\texttt{Blank}$}};
			\blanktile
		\end{scope}	
	\end{scope}
	\begin{scope}[shift ={(0,-1.5)}]
		
		\begin{scope}[shift = {(-1.5,0)} ]
			\tiletransmit{a}
		\end{scope}
		
		\begin{scope}[shift = {(0,0)} ]
			\tilestateup{s}{b}{s'}{b'}
		\end{scope}
		
		\begin{scope}[shift = {(1.5,0)} ]
			\tilestateleft{\ell}{c}{\ell'}{c'}
		\end{scope}
		
		\begin{scope}[shift = {(3,0)} ]
			\tilestateright{r}{d}{r'}{d'}
		\end{scope}
		
		\begin{scope}[shift = {(4.5,0)} ]
			\tilestateincomingleft{q}{a}
		\end{scope}
		
		\begin{scope}[shift = {(6,0)} ]
			\tilestateincomingright{q}{a}
		\end{scope}

	\end{scope}
\end{tikzpicture}

	\caption{The alphabet $A^{0}_{\mathcal{T}}$ associated to a Turing machine $\mathcal{T} = \{Q,\Sigma, \delta,q_0,q_F\}$. $\sqcup$ is the blank symbol, $q_0$ the starting state. For the bottom row tiles, $a \in \Sigma$ is an arbitrary symbol and $q \in Q$ is an arbitrary state. $(s,b),(\ell,c),(r,d) \in Q \times \Sigma$ are pairs such that $\delta(s,b)=(s',b',0)$, $\delta(\ell,c)=(\ell',c',-1)$ and $\delta(r,d)=(r',d',+1)$.}
	\label{fig:wangtiles}
\end{figure}
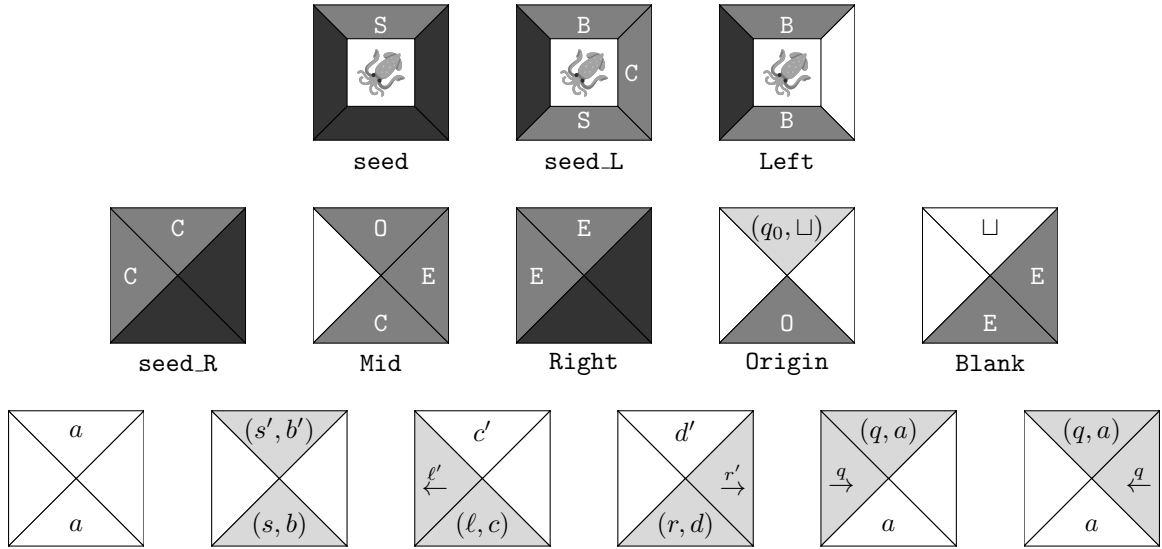

Notice that the first three tiles have a little squid drawn over them. This will be relevant for interactions with the tentacle layer $\boldsymbol{T}$, for now it may be ignored.

Let us illustrate how the computation tape will work. Recall that the branching layer induces on each position $(h,\epsilon)\in M$ a structure which we identify as a computation tape which extends to the right with every branching. We may identify each of these tapes with $\{ (n,m)\in \NN^2 : n \leq m \}$ and tile them with elements of $A^0_{\mathcal{T}}$ using as rule that the seed symbols on $\boldsymbol{B}$ must match with the seed tile, and that the symbols and colors in adjacent edges must match. See~\Cref{fig:machine_example}

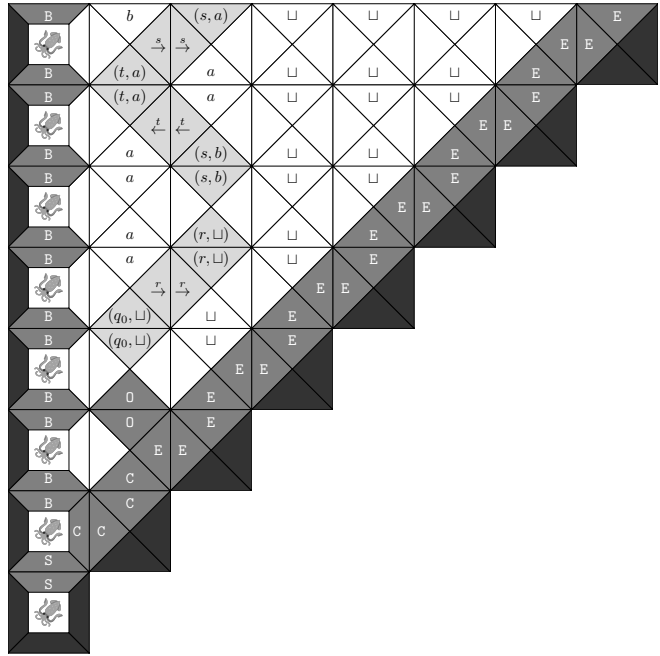
\begin{figure}[ht!]
	\scalebox{0.6}{
	\begin{tikzpicture}[scale = 1.8]
		\begin{scope}[shift = {(0,7)} ]
			\lefttile
		\end{scope}
		\begin{scope}[shift = {(1,7)} ]
			\tilestateright{t}{a}{s}{b}
		\end{scope}
		\begin{scope}[shift = {(2,7)} ]
			\tilestateincomingleft{s}{a}
		\end{scope}
		\begin{scope}[shift = {(3,7)} ]
			\tiletransmit{\sqcup}
		\end{scope}
		\begin{scope}[shift = {(4,7)} ]
			\tiletransmit{\sqcup}
		\end{scope}
		\begin{scope}[shift = {(5,7)} ]
			\tiletransmit{\sqcup}
		\end{scope}
		\begin{scope}[shift = {(6,7)} ]
			\blanktile
		\end{scope}
		\begin{scope}[shift = {(7,7)} ]
			\righttile
		\end{scope}
		\begin{scope}[shift = {(0,6)} ]
			\lefttile
		\end{scope}
		\begin{scope}[shift = {(1,6)} ]
			\tilestateincomingright{t}{a}
		\end{scope}
		\begin{scope}[shift = {(2,6)} ]
			\tilestateleft{s}{b}{t}{a}
		\end{scope}
		\begin{scope}[shift = {(3,6)} ]
			\tiletransmit{\sqcup}
		\end{scope}
		\begin{scope}[shift = {(4,6)} ]
			\tiletransmit{\sqcup}
		\end{scope}
		\begin{scope}[shift = {(5,6)} ]
			\blanktile
		\end{scope}
		\begin{scope}[shift = {(6,6)} ]
			\righttile
		\end{scope}
		\begin{scope}[shift = {(0,5)} ]
			\lefttile
		\end{scope}
		\begin{scope}[shift = {(1,5)} ]
			\tiletransmit{a}
		\end{scope}
		\begin{scope}[shift = {(2,5)} ]
			\tilestateup{r}{\sqcup}{s}{b}
		\end{scope}
		\begin{scope}[shift = {(3,5)} ]
			\tiletransmit{\sqcup}
		\end{scope}
		\begin{scope}[shift = {(4,5)} ]
			\blanktile
		\end{scope}
		\begin{scope}[shift = {(5,5)} ]
			\righttile
		\end{scope}
		\begin{scope}[shift = {(0,4)} ]
			\lefttile
		\end{scope}
		\begin{scope}[shift = {(1,4)} ]
			\tilestateright{q_0}{\sqcup}{r}{a}
		\end{scope}
		\begin{scope}[shift = {(2,4)} ]
			\tilestateincomingleft{r}{\sqcup}
		\end{scope}
		\begin{scope}[shift = {(3,4)} ]
			\blanktile
		\end{scope}
		\begin{scope}[shift = {(4,4)} ]
			\righttile
		\end{scope}
		\begin{scope}[shift = {(0,3)} ]
			\lefttile
		\end{scope}
		\begin{scope}[shift = {(1,3)} ]
			\origintile
		\end{scope}
		\begin{scope}[shift = {(2,3)} ]
			\blanktile
		\end{scope}
		\begin{scope}[shift = {(3,3)} ]
			\righttile
		\end{scope}
		\begin{scope}[shift = {(0,2)} ]
			\lefttile
		\end{scope}
		\begin{scope}[shift = {(1,2)} ]
			\midtile
		\end{scope}
		\begin{scope}[shift = {(2,2)} ]
			\righttile
		\end{scope}
		\begin{scope}[shift = {(0,1)} ]
			\seedL
		\end{scope}
		\begin{scope}[shift = {(1,1)} ]
			\seedR
		\end{scope}
		\begin{scope}[shift = {(0,0)} ]
			\seed
		\end{scope}
	\end{tikzpicture}
}
	
	\caption{An example of a tiling of the computation layer. Note that fixing the seed symbol on the bottom position determines completely the rest of the tiling. Here we illustrate a Turing machine with alphabet $\Sigma = \{\sqcup,a,b\}$, $Q = \{q_0,r,s,t\}$ and with $\delta(q_0,\sqcup) =(a,r,1)$, $\delta(r,\sqcup) = (s,b,0)$, $\delta(s,b) = (a,t,-1)$, $\delta(t,a) = (b,s,1)$. }
	\label{fig:machine_example}
\end{figure}

In order to interact with the tentacle layer, we will need to do a few modifications to this layer which will essentially consist in the inclusion and removal of symbols from the alphabet. The first modification has the purpose to enable sending ``commands'' to the tentacle layer. In order to do this we will require our Turing machine $\mathcal{T}$ to have, for every $s \in S$, a special pair of states $q_s,q'_s$ and the transition $\delta(q_s,\sqcup) = (q'_s,\sqcup,0)$. We will use this single step transition to communicate the command ``extend the tentacle on direction $s \in S$''. Similarly, we shall also require a special pair of states $q_{\texttt{D}},q'_{\texttt{D}}$ and the transition $\delta(q_{\texttt{D}},\sqcup) = (q'_{\texttt{D}},\sqcup,0)$. This will be used to send the command ``delete the tentacle''. This will be implemented through the exchange of some tiles in the alphabet. Formally, let $A_{\mathcal{T}}^{\texttt{comm}}$, $A_{\mathcal{T}}^{\texttt{old}}$ be the sets 

\[  \begin{tikzpicture}[scale =1.8]
		\node at (-1.2,0) {$A^{\texttt{comm}}_{\mathcal{T}} =$};
		\begin{scope}[shift = {(0,0)}]
		\draw [thick,decorate, decoration = {brace}] (-0.6,-0.6) --  (-0.6,0.6);
		\node at (0.9,0) {$: s \in S$};
		\draw [thick,decorate, decoration = {brace}] (1.3,0.6) --  (1.3,-0.6);
		\node at (1.5,0) {$\cup$};
		\tilecommandleft{s}
		\end{scope}
		
		\begin{scope}[shift = {(2.3,0)}]
			\draw [thick,decorate, decoration = {brace}] (-0.6,-0.6) --  (-0.6,0.6);
			\node at (0.9,0) {$: s \in S$};
		      \draw [thick,decorate, decoration = {brace}] (1.3,0.6) --  (1.3,-0.6);
            \node at (1.5,0) {$\cup$};
			\tilecommandright{s}
		\end{scope}

        \begin{scope}[shift = {(4.6,0)}]
			\draw [thick,decorate, decoration = {brace}] (-0.6,-0.6) --  (-0.6,0.6);
		      \draw [thick,decorate, decoration = {brace}] (1.8,0.6) --  (1.8,-0.6);
            \node at (0.6,-0.5) {$,$};
            \begin{scope}[shift = {(0,0)}]
                \tilecommandleft{\texttt{D}}
            \end{scope}
            \begin{scope}[shift = {(1.2,0)}]
                \tilecommandright{\texttt{D}}
            \end{scope}
		\end{scope}
  
	\end{tikzpicture} \]
    \[  \begin{tikzpicture}[scale =1.8]
		\node at (-1.2,0) {$A^{\texttt{old}}_{\mathcal{T}} =$};
		\begin{scope}[shift = {(0,0)}]
		\draw [thick,decorate, decoration = {brace}] (-0.6,-0.6) --  (-0.6,0.6);
		\node at (0.9,0) {$: s \in S$};
		\draw [thick,decorate, decoration = {brace}] (1.3,0.6) --  (1.3,-0.6);
        \node at (1.5,0) {$\cup$};
		\tilestateup{q_s}{\sqcup}{q'_s}{\sqcup}
		\end{scope}

        \begin{scope}[shift = {(2.3,0)}]
			\draw [thick,decorate, decoration = {brace}] (-0.6,-0.6) --  (-0.6,0.6);
		      \draw [thick,decorate, decoration = {brace}] (0.6,0.6) --  (0.6,-0.6);
			\tilestateup{q_{\texttt{D}}}{\sqcup}{q'_{\texttt{D}}}{\sqcup}
		\end{scope}

	\end{tikzpicture} \]

We shall add {$A^{\texttt{comm}}_{\mathcal{T}}$ to the alphabet and remove {$A^{\texttt{old}}_{\mathcal{T}}$. This has the additional effect that these kind of transitions will only be able to occur at the origin of the tape. The second modification has the purpose of making the Turing machine able to read information from the tentacle layer. We shall only need to read a single symbol from $A$ in the tentacle layer at a time and react to it. To this end, we shall do a slight modification to our definition of Turing machine. We replace the transition function $\delta$ by a ``conditional transition'' function which also depends on the alphabet $A$, namely, we shall consider now $\delta \colon Q \times \Sigma \times A \to Q \times \Sigma \times \{-1,0,1\}$. All but one kind of transition in our Turing machine will actually ignore the symbol $a \in A$, and thus those transitions will still be represented by those on the alphabet $A^0_{\mathcal{T}}$. More precisely, we will consider two special states $q^0,q^1\in Q$ and for each $a \in A$ we will have a special state $q^a \in Q$ such that \[  \delta(q^a,\sqcup,a) = (q^1,\sqcup,0) \mbox{ and } \delta(q^a,\sqcup,b) = (q^0,\sqcup,0) \mbox{ for } b \neq a.  \]
And those two will be the only ``conditional'' transitions. In order to implement those extra transitions we consider the alphabet $A^{\texttt{read}}_{\mathcal{T}}$ described below. The tiles from $A^{\texttt{read}}_{\mathcal{T}}$ are called \define{conditional transition tiles}.

	\[  \begin{tikzpicture}[scale =1.8]
		\node at (-1.2,0) {$A^{\texttt{read}}_{\mathcal{T}} =$};
		\begin{scope}[shift = {(0,0)}]
		\draw [thick,decorate, decoration = {brace}] (-0.6,-0.6) --  (-0.6,0.6);
		\node at (0.9,0) {$: a \in A$};
		\draw [thick,decorate, decoration = {brace}] (1.3,0.6) --  (1.3,-0.6);
		\node at (1.5,0) {$\cup$};
		\tilestateupconditional{q^a}{\sqcup}{q^1}{\sqcup}{a};
		\end{scope}
		
		\begin{scope}[shift = {(2.3,0)}]
			\draw [thick,decorate, decoration = {brace}] (-0.6,-0.6) --  (-0.6,0.6);
			\node at (1.3,0) {$: a,b \in A, a \neq b$};
			\draw [thick,decorate, decoration = {brace}] (2.1,0.6) --  (2.1,-0.6);
			\tilestateupconditional{q^a}{\sqcup}{q^0}{\sqcup}{b};
		\end{scope}
	\end{tikzpicture} \]
	
	The precise Turing machine $\mathcal{T}$ we will use will be better described after introducing the tentacle layer. For now, we just set the alphabet of the computation layer as $A_{\boldsymbol{C}} = (A^{0}_{\mathcal{T}}\setminus A^{\texttt{old}}_{\mathcal{T}}) \cup A^{\texttt{comm}}_{\mathcal{T}} \cup  A^{\texttt{read}}_{\mathcal{T}} \cup \{ \varnothing\}$ and define the computation layer $\boldsymbol{C}$ as the subset of configurations $\gamma \in (A_{\boldsymbol{C}})^M$ such that, given $(\alpha,\delta,\beta)\in \boldsymbol{A}\times \boldsymbol{D}\times \boldsymbol{B}$ satisfy,
	\begin{enumerate}
		\item \textbf{Tiles overlay with computation zones}: We have that for every $(h,w)\in M$, $\gamma(h,w) = \varnothing$ if and only if $\beta(h,w) = \varnothing$.
		\item \textbf{Seed condition:} For every $h \in H$, then $\gamma(h,\epsilon)$ is the seed tile, that is \[ {\begin{tikzpicture}
			\node at (-2,0) {$\gamma(h,\epsilon) = $};
            \begin{scope}[scale = 2]
               \seed 
            \end{scope}
		\end{tikzpicture}}\] 
		\item \textbf{Wang tile condition}: The symbols in $\gamma$ that are on top of a computation zone in $\boldsymbol{B}$ must match vertically and horizontally as Wang tiles (see~\Cref{fig:machine_example}).
	\end{enumerate}
Note that the Wang tile condition is only enforced on the computation zone, so on every step, as the computation nonuniformly branches to one of the sons, Wang tiles will be matched in that direction, and completely erased in the other.

\subsection{The tentacle layer}

Finally, we shall construct a layer which emulates ``tentacles'' which grow from every position in $H$ and are able to extend along the group and bring back information about the symbols in the alphabet layer back to their base. These tentacles are essentially a ``linked list'' data structure which shares some information.

More precisely, consider the alphabet \[ A_{\boldsymbol{T}} = \left(\{0,1\}\times (S \times S) \times A \times (\{\texttt{D}\}\cup \{ \texttt{G}_s : s \in S\})\right) \cup \{\varnothing\}.  \]

For a symbol $t = (b,(s_1,s_2),a,c) \in A_{\boldsymbol{T}}$ with $t \neq \varnothing$, we denote by
\begin{enumerate}
	\item $\texttt{bit}(t) = b$ the \define{branching bit} of $t$,
	\item $\texttt{prev}(t) = s_1$ the \define{predecessor} of $t$,
	\item $\texttt{next}(t) = s_2$ the \define{successor} of $t$,
	\item $\texttt{symb}(t) = a$ the \define{symbol} of $t$,
	\item $\texttt{comm}(t) = c$ the \define{command} of $t$.
\end{enumerate}

We will refer to three types of tentacle ``parts'' depending on the values of $\texttt{prev}(t)$ and $\texttt{next}(t)$. Recall that we denote by $1_H$ the identity of $H$ and that $1_H \in S$. If $\texttt{prev}(t) = 1_H$, we say that $t$ is a \define{base}. If $\texttt{next}(t) = 1_H$ we say that $t$ is a \define{tip} (some $t$ can be simultaneously bases and tips). Finally, if both $\texttt{prev}(t)$ and $\texttt{next}(t)$ are distinct from $1_H$, we say it is an \define{arm}.

The command $\texttt{D}$ stands for ``delete'' and will force the tentacle to be deleted in the next step. The commands $\texttt{G}_s$ stand for ``grow on direction $s$'' and mean that the tentacle should replace the $\texttt{next}$ value of its tip for $s$ and build a new tip at that position. Note that it is possible to grow on the trivial direction $1_H \in S$, which means that the tentacle will remain unchanged.

We shall first define formally the tentacle layer and explain the meaning of the rules afterwards. If for some $t \in A_{\boldsymbol{T}}$ we invoke any of the five functions defined above, we implicitly mean that it is not $\varnothing$. We shall not write this explicitly to avoid cluttering the definition even more.

Let $(\alpha,\delta,\beta,\gamma)\in \boldsymbol{A}\times \boldsymbol{D}\times \boldsymbol{B}\times \boldsymbol{C}$. We define the \define{tentacle layer} as the set $\boldsymbol{T}$ of configurations $\tau \in (A_{\boldsymbol{T}})^M$ which satisfy the following rules:

\begin{enumerate}
	\item \textbf{Seed rule}: For every $h \in H$, $\texttt{prev}(\tau(h,\epsilon)) = \texttt{next}(\tau(h,\epsilon))=1_H$.
	\item \textbf{Tentacle consistency rules:} For every $(h,w)\in M$, if $\tau(h,w)\neq \varnothing$ then:
	\begin{enumerate}
		\item If $\texttt{next}(\tau(h,w)) = s \neq 1_H$, then $ \texttt{prev}(\tau(h\cdot \texttt{next}(\tau(h,w)),w)) = s^{-1}$.
		\item If $\texttt{prev}(\tau(h,w)) = s \neq 1_H$, then $ \texttt{next}(\tau(h\cdot \texttt{prev}(\tau(h,w)),w)) = s^{-1}$.
		\item $\texttt{bit}(\tau(h,w)) = \texttt{bit}(\tau(h\cdot \texttt{next}(\tau(h,w)),w)) = \texttt{bit}(\tau(h\cdot \texttt{prev}(\tau(h,w)),w))$.
		\item $\texttt{symb}(\tau(h,w)) = \texttt{symb}(\tau(h\cdot \texttt{next}(\tau(h,w)),w)) = \texttt{symb}(\tau(h\cdot \texttt{prev}(\tau(h,w)),w))$.
		\item $\texttt{comm}(\tau(h,w)) = \texttt{comm}(\tau(h\cdot \texttt{next}(\tau(h,w)),w)) = \texttt{comm}(\tau(h\cdot \texttt{prev}(\tau(h,w)),w))$.
	\end{enumerate}
	\item \textbf{Tip reads alphabet}: if $\texttt{next}(\tau(h,w)) = 1_H$ then $\texttt{symb}(\tau(h,w)) = \alpha(h,w)$
	\item \textbf{Base reads branching bit}: if $\texttt{prev}(\tau(h,w)) = 1_H$ and  $\beta(h,w) \in \{ (\texttt{S},b), (\blacktriangleright,b)\}$ then $\texttt{bit}(\tau(h,w)) = b$.
	\item \textbf{Base reads the command}: if $\texttt{prev}(\tau(h,w)) = 1_H$ then $\gamma(h,w)$ is a tile with a squid and:
    \begin{enumerate}
        \item If the symbol at the right of the squid is $\texttt{D}$, then $\texttt{comm}(\tau(h,w)) = \texttt{D}$
        \item If the symbol at the right of the squid is $s\in S$, then $\texttt{comm}(\tau(h,w)) = \texttt{G}_s$
        \item In there is no symbol at the right of the squid, then $\texttt{comm}(\tau(h,w)) = \texttt{G}_{1_H}$.
    \end{enumerate}
    \item \textbf{Conditional transition reads alphabet from base}: if $\texttt{prev}(\tau(h,w)) = 1_H$ and $\gamma(hR_{\delta}(h),w)$ is a conditional transition tile, then the symbol at the middle of the tile must be $\texttt{symb}(\tau(h,w))$.
	\item \textbf{Delete command rule:} if $\texttt{comm}(\tau(h,w)) = \texttt{D}$, let $b = \texttt{bit}(\tau(h,w))$, then:
	\begin{enumerate}
		\item If $\texttt{prev}(\tau(h,w)) \neq 1_H$ (if the symbol is not a base) then $\tau(h,wb) = \varnothing$.
		\item If $\texttt{prev}(\tau(h,w)) \neq 1_H$ (if the symbol is a base) then $\texttt{prev}(\tau(h,wb)) = \texttt{next}(\tau(h,wb)) = 1_H$.
	\end{enumerate}
	\item \textbf{Grow command rule:} if $\texttt{comm}(\tau(h,w)) = \texttt{G}_s$, let $b = \texttt{bit}(\tau(h,w))$, then:
	\begin{enumerate}
		\item If $\texttt{next}(\tau(h,w)) \neq 1_H$ (if the symbol is not a tip) then $\texttt{prev}(\tau(h,wb)) = \texttt{prev}(\tau(h,w))$ and $\texttt{next}(\tau(h,wb)) = \texttt{next}(\tau(h,w))$.
		\item If $\texttt{prev}(\tau(h,w)) \neq 1_H$ (if the symbol is a tip) then $\texttt{prev}(\tau(h,wb)) = \texttt{prev}(\tau(h,w))$,  $\texttt{next}(\tau(h,wb)) = s$. 
  
		Moreover, if $s \neq 1_H$, then $\texttt{prev}(\tau(hs,wb)) = s^{-1}$ and $\texttt{next}(\tau(hs,wb)) = 1_H$. 
	\end{enumerate}
	\item \textbf{No new tentacles rule:} For any $b \in \{0,1$\}, unless it conflicts with the Grow or Delete command rules, then $\tau(h,wb) = \varnothing$.
\end{enumerate}

The seed rule states that at every seed position $(h,\epsilon)$ we initially have a tentacle which is simultaneously a base and a tip. The consistency rules establish that the non-trivial symbols in this alphabet arrange themselves as a collection of (doubly) linked lists. Each element points to the next (through the $\texttt{next}(t)$ function) and to the previous one (through the $\texttt{prev}(t)$ function). Also, each one of these lists, which we shall call ``tentacles'' shares the same information (branching bit, command and symbol).

The next three rules impose where the information of the tentacles comes from. The symbol is read on the tip of the tentacle from the alphabet layer, the branching bit is read on the base of the tentacle from the branching layer (thus the base of a tentacle is always attached to the leftmost position of a computation zone), and the command is read on the base of the tentacle from the computing layer in the tile which has the squid on it. Notice that the base of every tentacle will always be matched up with the squid tiles.

The next rule governs how the computation layer reads from the tentacle the information for its conditional transitions. It states that each time a conditional transition occurs (recall that they always occur at the origin of the computation tape), the conditional symbol in the middle of the tile must be the symbol carried by the base of the tentacle, which is the one read at the tip of the tentacle.

The following three rules determine how the tentacles evolve when increasing the length of $w$. The evolution is done in the tree $\{0,1\}^*$ following the branching bit (which is the same as the one in the computation zone attached to the base). If a tentacle has the command ``delete'' then the whole structure is replaced by $\varnothing$ except for the base, which reverts to its initial state of being both a base and a tip. If a tentacle has the command ``grow in direction $s\in S$'' then its tip changes so that it now points at $s$, and a new tip is created at said position. Note that if $s = 1_H$ the rules impose that the tentacle copies to the branching layer with its directions unchanged. The last rule imposes that besides this prescribed evolution, no new tentacles can occur anywhere else.

The behaviour of a single tentacle is illustrated on~\Cref{fig:tentacle}.

\begin{figure}[ht!]
    \centering
    \begin{tikzpicture}
        
        \begin{scope}[shift = {(0,0)}]
            \draw[thick, black!60, <->] (0,0) to (10,0);
            \draw[thick, black!60,  <->] (2,0.5) to (12,0.5);
            \draw[thick, black!60,  <->] (4,1) to (14,1);
            \draw[thick, black!60,  <->] (0,-0.5) to (8,1.5);
            \draw[thick, black!60,  <->] (2,-0.5) to (10,1.5);
            \draw[thick, black!60,  <->] (4,-0.5) to (12,1.5);
            \draw[thick, black!60,  <->] (6,-0.5) to (14,1.5);
            \draw[fill = white, thick] (2,0) circle (0.2);
            \node at (2,0) {$a$};
            \draw[fill = white, thick] (4,0) circle (0.2);
            \node at (4,0) {$b$};
            \draw[fill = white, thick] (6,0) circle (0.2);
            \node at (6,0) {$c$};
            \draw[fill = white, thick] (8,0) circle (0.2);
            \node at (8,0) {$a$};

            \draw[fill = white, thick] (4,0.5) circle (0.2);
            \node at (4,0.5) {$b$};
            \draw[fill = white, thick] (6,0.5) circle (0.2);
            \node at (6,0.5) {$c$};
            \draw[fill = white, thick] (8,0.5) circle (0.2);
            \node at (8,0.5) {$b$};
            \draw[fill = white, thick] (10,0.5) circle (0.2);
            \node at (10,0.5) {$b$};

            \draw[fill = white, thick] (6,1) circle (0.2);
            \node at (6,1) {$c$};
            \draw[fill = white, thick] (8,1) circle (0.2);
            \node at (8,1) {$c$};
            \draw[fill = white, thick] (10,1) circle (0.2);
            \node at (10,1) {$a$};
            \draw[fill = white, thick] (12,1) circle (0.2);
            \node at (12,1) {$c$};
            
        \end{scope}

        \draw[thick, shorten <=0.2cm, black!20, ->] (2,0) to (2,7); 
        \draw[thick, shorten <=0.2cm, black!20, ->] (4,0) to (4,7); 
        \draw[thick, shorten <=0.2cm, black!20, ->] (6,0.5) to (6,7.5); 
        \draw[thick, shorten <=0.2cm, black!20, ->] (8,0.5) to (8,7.5); 
        \node at (0.5,1) {$\texttt{G}_{(1,0)}$};
        \node at (0.5,2) {$\texttt{G}_{(0,1)}$};
        \node at (0.5,3) {$\texttt{G}_{(0,0)}$};
        \node at (0.5,4) {$\texttt{G}_{(1,0)}$};
        \node at (0.5,5) {$\texttt{D}$};
        \draw[fill = white, thick] (2,1) circle (0.2);
            \node at (2,1) {$a$};
        \draw[thick, dotted, ->, black!80] (1.8,1) arc (20:340:2mm);
        \draw[thick,->, black] (2.2,1) arc (160:-160:2mm);
        \draw[fill = white, thick] (2,2) circle (0.2);
            \node at (2,2) {$b$};
        \draw[thick, ->,shorten >=0.2cm,shorten <=0.2cm] (2,2) to [out=15,in=165] (4,2);
        \draw[thick, dotted,  ->,black!80, shorten >=0.2cm,shorten <=0.2cm] (4,2) to [out=195,in=-15] (2,2);
        \draw[fill = white, thick] (4,2) circle (0.2);
            \node at (4,2) {$b$};
        \draw[thick, dotted, ->, black!80] (1.8,2) arc (20:340:2mm);
        \draw[thick,->, black] (4.2,2) arc (160:-160:2mm);
        \begin{scope}[shift = {(0,2)}]
           \draw[fill = white, thick] (2,1) circle (0.2);
           \draw[fill = white, thick] (4,1) circle (0.2);
            \draw[fill = white, thick] (6,2) circle (0.2);
            \node at (4,1) {$c$};
            \node at (6,2) {$c$};
            \node at (2,1) {$c$};
            \draw[thick, ->,shorten >=0.2cm,shorten <=0.2cm] (2,1) to [out=15,in=165] (4,1);
            \draw[thick,  dotted, ->,black!80, shorten >=0.2cm,shorten <=0.2cm] (4,1) to [out=195,in=-15] (2,1);
            \draw[thick, dotted, ->, black!80] (1.8,1) arc (20:340:2mm);
            \draw[thick, ->,shorten >=0.2cm,shorten <=0.2cm] (4,1) to [out=40,in=190] (6,2);
            \draw[thick, dotted,  ->,black!80, shorten >=0.2cm, shorten <=0.2cm] (6,2) to [out=220,in=10] (4,1);
            \draw[thick,->, black] (6.2,2) arc (160:-160:2mm); 
        \end{scope}
        \begin{scope}[shift = {(0,3)}]
           \draw[fill = white, thick] (2,1) circle (0.2);
           \draw[fill = white, thick] (4,1) circle (0.2);
            \draw[fill = white, thick] (6,2) circle (0.2);
            \node at (4,1) {$c$};
            \node at (6,2) {$c$};
            \node at (2,1) {$c$};
            \draw[thick, ->,shorten >=0.2cm,shorten <=0.2cm] (2,1) to [out=15,in=165] (4,1);
            \draw[thick,  dotted, ->,black!80, shorten >=0.2cm,shorten <=0.2cm] (4,1) to [out=195,in=-15] (2,1);
            \draw[thick, dotted, ->, black!80] (1.8,1) arc (20:340:2mm);
            \draw[thick, ->,shorten >=0.2cm,shorten <=0.2cm] (4,1) to [out=40,in=190] (6,2);
            \draw[thick, dotted,  ->,black!80, shorten >=0.2cm, shorten <=0.2cm] (6,2) to [out=220,in=10] (4,1);
            \draw[thick,->, black] (6.2,2) arc (160:-160:2mm); 
        \end{scope}
        \begin{scope}[shift = {(0,4)}]
           \draw[fill = white, thick] (2,1) circle (0.2);
           \draw[fill = white, thick] (4,1) circle (0.2);
            \draw[fill = white, thick] (6,2) circle (0.2);
            \draw[fill = white, thick] (8,2) circle (0.2);
            \node at (4,1) {$b$};
            \node at (6,2) {$b$};
            \node at (2,1) {$b$};
            \node at (8,2) {$b$};
            \draw[thick, ->,shorten >=0.2cm,shorten <=0.2cm] (2,1) to [out=15,in=165] (4,1);
            \draw[thick,  dotted, ->,black!80, shorten >=0.2cm,shorten <=0.2cm] (4,1) to [out=195,in=-15] (2,1);
            \draw[thick, dotted, ->, black!80] (1.8,1) arc (20:340:2mm);
            \draw[thick, ->,shorten >=0.2cm,shorten <=0.2cm] (4,1) to [out=40,in=190] (6,2);
            \draw[thick, dotted,  ->,black!80, shorten >=0.2cm, shorten <=0.2cm] (6,2) to [out=220,in=10] (4,1);
            \draw[thick, ->,shorten >=0.2cm,shorten <=0.2cm] (6,2) to [out=15,in=165] (8,2);
            \draw[thick,  dotted, ->,black!80, shorten >=0.2cm,shorten <=0.2cm] (8,2) to [out=195,in=-15] (6,2);
            \draw[thick,->, black] (8.2,2) arc (160:-160:2mm); 
        \end{scope}

        \begin{scope}[shift = {(0,5)}]
        \draw[fill = white, thick] (2,1) circle (0.2);
            \node at (2,1) {$a$};
        \draw[thick,dotted, ->, black!80] (1.8,1) arc (20:340:2mm);
        \draw[thick,->, black] (2.2,1) arc (160:-160:2mm);
        \end{scope}

    \end{tikzpicture}
    \caption{An illustration of the dynamics of a tentacle. In the base we have $H = \ZZ^2$ filled with symbols from $A = \{a,b,c\}$. The tentacle starts at the bottom left position and evolves according to the command written on the left. The $\texttt{next}$ arrows are bold, while the $\texttt{prev}$ arrows are dotted. At every step, the symbol shown on the tentacle is the one at the tip. In this figure the tentacle reaches the position $(2,1)$ from its starting position and then gets deleted.}
    \label{fig:tentacle}
\end{figure}
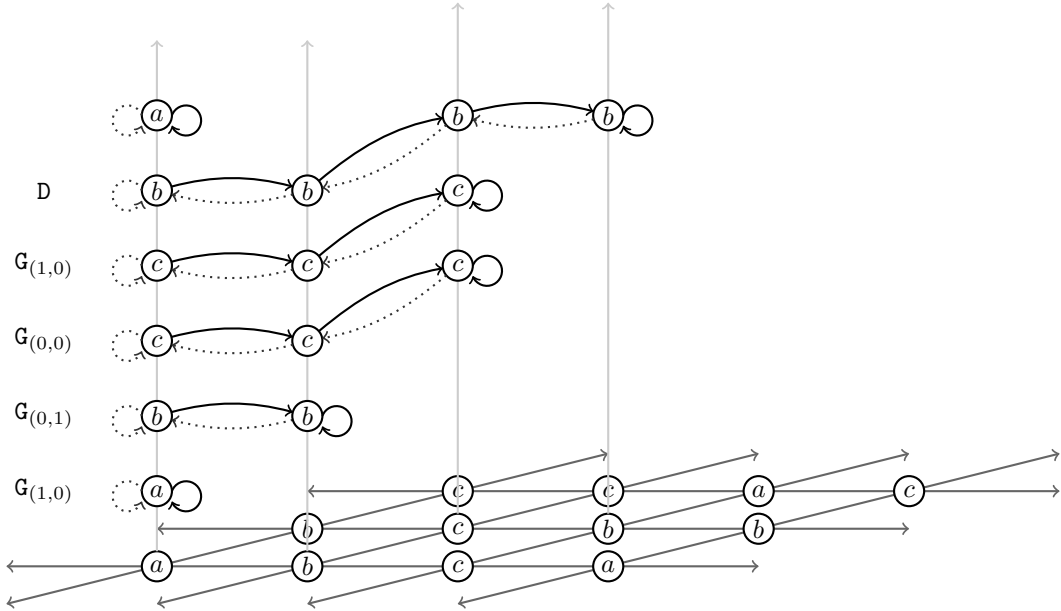

\subsection{Description of the Turing machine} Recall that the word problem of $H$ is decidable, that is, there exists an algorithm which on input $w \in S^*$ accepts if and only if $w$ represents the identity in $H$. From this algorithm, one can easily construct another which takes $w \in S^*$ and outputs $u \in S^*$ which represents the same element as $w$ and has minimal length (for instance, running the algorithm for the word problem on $wu^{-1}$ for every $u$ of length at most $|w|$ and choosing the smallest for which it accepts). Such a word $u$ will be referred to as a ``geodesic''.

Also as $X$ is an effectively closed (by patterns) subshift, there exists an algorithm which on input $n \in \NN$, outputs a pattern coding $c_n$ such that the collection $\mathcal{C} = \{c_n : n \in \NN\}$ defines $X$. Recall that a pattern coding can be thought of as a finite list $c_n = \{ (w_i,a_i)\}_{i \in I}$ with $w_i \in S^*$ and $a_i \in A$. Running this algorithm along with the one which replaces a word by a geodesic, we may assume without loss of generality that the algorithm is chosen such that all $w_i$ occurring in each $c_n$ are geodesics.

Next we will describe the Turing machine $\mathcal{T}$ used to define the computation layer. We will build $\mathcal{T}$ so that it works on a one-sided tape and such that it never writes over the origin (it may move and change states at the origin). Recall also that we ask $\mathcal{T}$ to have special states $q_s,q_s'$ for each $s \in S$, $q_{\texttt{D}},q'_{\texttt{D}}$, $q^a$ for each $a \in A$ and $q^0,q^1$; and that there are conditional transitions from each $q_a$ to either $q^0$ or $q^1$ depending on a symbol from $A$ (which will be read from the tentacle layer) as follows \[ \delta(q^a,\sqcup,b) = \begin{cases}
    (q^0,\sqcup, 0) & \mbox{ if } b \neq a\\
    (q^1,\sqcup, 0) & \mbox{ if } b = a\\
\end{cases}  \] 

We define $\mathcal{T}$ as a Turing machine which executes the following algorithm. 

\begin{enumerate}
	\item Initialize $n = 0$ and execute the following loop:
	\begin{enumerate}
		\item Compute $c_n = \{(w_i,a_i)\}_{i \in I}$ (recall that each $w_i$ is a geodesic).
        \item Initialize $\texttt{DANGER} = 1$
        \item For each $i \in I$, let $m = |w_i|$ and write $w_i = s_1s_2\dots s_m \in S^*$.
        \begin{enumerate}
            \item Wait for at least $\left( 2\frac{|S|^{2m+1}-1}{|S|-1}\right) + \left\lceil\log_2\left( \frac{|S|^{2m+1}-1}{|S|-1}\right)\right\rceil + 1$ steps.
            \item Initialize $j = 1$ and execute the following loop:
            \begin{enumerate}
                \item If $j = m+1$, break the loop (go to (iii)).
                \item Move the head to the origin and go to state $q_{s_j}$ (this sends the instruction ``grow by $s_j$'' to the tentacle layer).
                \item replace $j$ by $j+1$ and restart the loop (go to (A)).
            \end{enumerate}
            \item Move the head to the origin, go to state $q_{a_i}$ and execute the conditional transition (this reads the symbol from the tentacle layer, goes to $q^1$ if it is $a_i$ and to $q^0$ otherwise).
            \item If the current state is $q^0$, replace the value of $\texttt{DANGER}$ by $0$.
            \item Move the head to the origin and go to state $q_{\texttt{D}}$ (this sends the instruction ``delete'' to the tentacle layer).
        \end{enumerate}
        \item If $\texttt{DANGER}=1$, go to the final state $q_F$ (a forbidden pattern has been detected).
        \item Replace $n$ by $n + 1$ and restart the loop (go to (a)).
	\end{enumerate}
\end{enumerate}

It is clear that such an algorithm can be implemented by a Turing machine (with conditional transitions). The behavior is the following, if the leftmost position of the computation zone is attached to $h \in H$, it starts computing in order the forbidden pattern codings $c_n$. As soon as it finds one, it interacts with the tentacle layer to determine whether $h^{-1}(\alpha|_{H \times \{\epsilon\}}) \in [c_n]$. If it is the case, then at the end of the loop it will have $\texttt{DANGER}=1$ and the machine will go to the final state $q_F$. Otherwise, it checks the next pattern coding in the list.

The only instruction which we need to explain is the ``wait'' one. This simply means that the Turing machine will initialize a counter and count up to that number (thus delaying the rest of the algorithm for at least that number of steps). The purpose of this seemingly useless instruction is to allow enough time for branchings to occur in order to allow the existence of a configuration where the tentacles attached to different computation zones do not collide.

\subsection{Proof of the main theorem}

Now we are ready to prove the main result. We define $Z$ as the subset of all configurations $z = (\alpha,\delta,\beta,\gamma,\tau)\in \boldsymbol{A}\times \boldsymbol{D}\times \boldsymbol{B}\times \boldsymbol{C}\times \boldsymbol{T}$ such that no tile occurring in $\gamma$ carries the final state $q_F$. From the definition of each layer it is direct that $Z$ is a rooted SFT.

\begin{lemma}\label{lem:Z_subfactors_onto_X}
    The rooted SFT $Z$ root-factors onto $X$.
\end{lemma}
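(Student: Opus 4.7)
The map $\Phi$ extracts the alphabet-layer coordinate, so that $\phi(z)(h) = \alpha(h,\epsilon)$ for $z = (\alpha,\delta,\beta,\gamma,\tau) \in Z$. We need to establish both inclusions $\phi(Z) \subseteq X$ (soundness) and $X \subseteq \phi(Z)$ (completeness).

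For soundness, assume $z \in Z$ and write $x = \phi(z)$. Suppose for contradiction that $x \notin X$; then some pattern coding $c_n = \{(w_i,a_i)\}_{i \in I}$ enumerated by our algorithm satisfies $h^{-1}x \in [c_n]$ for some $h \in H$, equivalently $x(h\underline{w_i}) = a_i$ for every $i \in I$. The Turing machine $\mathcal{T}$ simulated in the computation zone whose seed sits at $(h,\epsilon)$ eventually processes $c_n$. For each $i \in I$ with $w_i = s_1 s_2 \cdots s_m$, the machine issues the grow commands $\texttt{G}_{s_1},\ldots,\texttt{G}_{s_m}$ and then a conditional transition from state $q_{a_i}$. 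A direct induction using the grow command rule shows that after these $m$ commands, the tip of the tentacle based at $h$ lies at position $h\underline{w_i}$ in the $H$-direction. The tip-reads-alphabet rule then forces the tentacle to carry the symbol $\alpha(h\underline{w_i},\epsilon) = a_i$, so the conditional transition leaves the state at $q^1$ and $\texttt{DANGER}$ stays equal to $1$ throughout the loop on $c_n$. Thus $\mathcal{T}$ enters the final state $q_F$, contradicting $z \in Z$.

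For completeness, fix $x \in X$. We construct $z = (\alpha,\delta,\beta,\gamma,\tau) \in Z$ with $\phi(z) = x$. Take $\alpha$ to be the trivial extension of $x$ to $M$, and let $\delta$ be the trivial extension of the configuration induced by some translation-like action of $\ZZ$ on $H$ with step in $S$. Once $\beta$ is chosen, the computation layer $\gamma$ is uniquely determined by running $\mathcal{T}$ on each computation zone. Since $x \in X$, for every $h \in H$ and every pattern coding $c_n$ there exists some $i \in I$ with $\alpha(h\underline{w_i},\epsilon) \neq a_i$; at that conditional transition $\texttt{DANGER}$ is reset to $0$ and $\mathcal{T}$ avoids $q_F$. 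The problem thus reduces to choosing $\beta$ together with a compatible $\tau$ without tentacle collisions.

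The main technical obstacle is the collision-free construction of $(\beta,\tau)$. Two tentacles can conflict only when they simultaneously occupy the same position $(h'',w) \in M$, which requires their branching-bit histories to coincide up to level $w$ and their $H$-extents to intersect. While $\mathcal{T}$ processes pattern coding $c_n$ with $m_n = \max_i |w_i|$, the tentacle rooted at $h$ stays within the ball of radius $m_n$ around $h$, so only bases within $H$-distance $2m_n$ of $h$ may conflict with the tentacle at $h$, and there are at most $|S|^{2m_n+1}$ such bases. The wait step of $\mathcal{T}$ provides at least $\lceil \log_2((|S|^{2m_n+1}-1)/(|S|-1)) \rceil + 1$ purely branching levels before any growth commands are issued, which by a counting argument allows us to place each base in a branch distinct from those of its relevant neighbors. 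A greedy construction over an exhaustion of $H$ by finite subsets, combined with a compactness argument via K\"onig's lemma, yields a global $\beta$ and $\tau$ satisfying all constraints simultaneously, completing the proof.
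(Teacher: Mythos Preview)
Your soundness direction is correct and essentially identical to the paper's argument.

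The completeness direction has the right outline---choose $\alpha,\delta$, observe that $\gamma,\tau$ are determined once $\beta$ is fixed, and note that $q_F$ is never reached since $x\in X$---but the crucial technical step, the collision-free construction of $\beta$, is not actually carried out. The paper does not appeal to compactness here; it gives an \emph{explicit} global assignment of branching bits and verifies it works. Concretely: at levels $|w|=2^n-1$ it assigns alternating bits along each $\ZZ$-orbit of $\delta$ (this is what keeps the growing computation zones themselves from overlapping, a constraint you never mention), and at all other levels it uses a fixed bijection $g\colon \NN\setminus\{2^n-1\}\to (H\setminus\{1_H\})\times\{\pm1\}$, ordered by word length, to separate pairs of bases differing by the element $g(|w|)$. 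The linear term $2|B_{2m}|$ in the wait time is precisely what guarantees that every element of $B_{2m}$ has appeared twice in the range of $g$ before any tentacle of length $m$ is grown; the logarithmic term you cite only accounts for the interleaved $2^n-1$ levels.

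Your replacement---``greedy construction over an exhaustion of $H$ combined with K\"onig's lemma''---is not a proof. You do not say what the greedy procedure assigns at each step or why it never gets stuck; you do not address the computation-zone collision constraint along $\ZZ$-orbits (which is a separate obstruction from tentacle collisions and is not covered by your $2m_n$-ball counting); and you do not specify the finitely-branching tree to which K\"onig's lemma would apply. A counting argument showing that $\lceil\log_2|B_{2m_n}|\rceil$ fresh bits suffice to properly colour the distance-$2m_n$ graph is plausible, but you would still need to show that these per-$n$ colourings can be realised \emph{consistently} as branching bits while simultaneously respecting the zone-growth constraint at every level. That simultaneous consistency is exactly the content of the paper's explicit construction, and it is missing from your sketch.
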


\begin{proof}
    Let $\phi \colon Z\ \to A^{H}$ be the map such that if $z = (\alpha,\delta,\beta,\gamma,\tau) \in Z$, then $\phi(z) = \alpha(h,\epsilon)$. Clearly $\phi$ is given by an alphabet map $\Phi$ (the projection to the first component) as in~\Cref{def:subfactors}, therefore it suffices to show that $\phi(Z)=X$.

    Let $x \in X$. We shall construct $z \in Z$ such that $\phi(z)=x$. Let $\alpha \in A^M$ be such that $\alpha(h,w) = x(h)$ for every $h \in H$. By our choice of generating set $S$, there exists $\delta \in \boldsymbol{D}$ such that $T_{\delta}$ is a translation-like action of $\ZZ$ on $H$. Clearly $(\alpha,\delta) \in \boldsymbol{A}\times \boldsymbol{D}$.

    Notice that a configuration $\beta \in \boldsymbol{B}$ is entirely determined by $\delta$ and the choices of branching bits. We are going to produce an assignment of branching bits which will ensure that no computation zones overlap, and that no tentacles will collide. To this end, let $f\colon \NN \to \NN$ be given by $f(n)=2^n -1$, and consider a bijective map $g\colon \NN\setminus f(\NN) \to (H\setminus \{1_H\})\times \{-1,+1\}$ with the property that elements of $H$ occur in the image of $g$ with non-decreasing word-length with respect to $S$.

    We define $\beta \in \boldsymbol{B}$ as the configuration produced by the following choice of branching bits on each $w \in \{0,1\}^*$:
    \begin{enumerate}
        \item If $|w| =2^n-1$ for some $n \in \NN$, assign alternating branching bits to the symbols $\texttt{S}$ or $\blacktriangleright$ in each copy of $\ZZ$ induced by $\delta$.
        \item Otherwise, let $g(|w|) = (h,m)$. Let $(t_i)_{i \in I}$ be a set of left coset representatives of the cyclic subgroup of $H$ generated by $h$.
        \begin{itemize}
            \item If $h$ is not a torsion element, assign the branching bit $0$ to elements of the form $(t_ih^{2k},w)$ for $k \in \ZZ$ which are paired with $\blacktriangleright$, and the branching bit $1$ to elements of the form $(t_ih^{2k+1},w)$ for $k \in \ZZ$ which are paired with $\blacktriangleright$.
            \item If $h$ is a torsion element, let $\kappa$ be the smallest positive integer such that $h^{\kappa}=1_{H}$. Assign the branching bit $0$ to elements of the form $(t_i h^{m \cdot k},w)$ for even values of $k \in \{0,\dots,\kappa-1\}$ which are paired with $\blacktriangleright$. Assign the branching bit $1$ to elements of the form $(t_i h^{mk},w)$ for odd values of $k \in \{0,\dots,\kappa-1\}$ which are paired with $\blacktriangleright$.
        \end{itemize}
    \end{enumerate}

    The assignment above for $|w| =2^n-1$ ensures that no computation zones collide and thus that $\beta$ is well defined. The second choice is slightly more subtle. At $g(|w|)= (h,m)$ we want to force elements of $H$ which are separated by $h$ to lie in different branches from that point onwards. This is easy enough to do if $h$ is a non-torsion element, as we can just assign alternating branching bits to every leftmost corner of a computation zone in the subgroup induced by $H$. However, if $h$ is a torsion element of odd order this cannot be done. To fix this, we do the alternating assignment twice, once going forward, and once going backwards. This ensures that after both $(h,-1)$ and $(h,+1)$ pass, then no elements separated by $h$ lie in the same branch.

    We have thus $(\alpha,\delta,\beta)\in \boldsymbol{A}\times \boldsymbol{D}\times \boldsymbol{B}$. These three configurations already determine $(\gamma,\tau)$ entirely if they exist, and the only way that they might not exist is if two tentacles collide. We claim that the choice of branchings given above ensures that no tentacles collide. Indeed, notice that if $B_m$ is the ball of radius $m$ in $H$ with respect to the word metric induced by $S$, then \[|B_m| \leq \sum_{k = 0}^m |S|^k = \frac{|S|^{m+1}-1}{|S|-1}.\]

    As we chose $g$ such that the images are non-decreasing in word length, it follows that in order to make sure that every $h \in B_{2m}$ has already occurred twice in the image of $g$, it suffices to be in an element of $\{0,1\}^*$ with length at least \[     \frac{2(|S|^{2m+1}-1)}{|S|-1} + \left\lceil \log_2\left(\frac{|S|^{2m+1}-1}{|S|-1} \right)\right\rceil +1 \geq 2|B_{2m}| +\left\lceil \log_2\left(2|B_{2m}|\right)\right\rceil.. \]
    The log term is needed due to the alternating branching bits assigned on steps $2^n-1$ that keep tapes from colliding on a single $T$-orbit on the directions layer.
    
    This is is precisely the number of steps we asked the algorithm to wait in the description of our Turing machine. Therefore when the machine sends information to the tentacle layer to grow from $h$ to $h{\underline{w}}$, we already know that every position in every ball of size $2m$ in the group $H$ is already on a distinct branch, thus the tentacles will not collide with each other. Moreover, as the word produced in each pattern coding $c_n$ are geodesics, the tentacles with not collide with themselves either.

    From the argument above, it follows that we have $z = (\alpha,\delta,\beta,\gamma,\tau)\in \boldsymbol{A}\times \boldsymbol{D}\times \boldsymbol{B}\times \boldsymbol{C}\times \boldsymbol{T}$ and that $\phi(z)=x$. We just need to argue that the final state is never reached in $\gamma$ in order to have $z \in Z$. Indeed, as $x \in X$, by definition for every forbidden pattern coding $c_n = (w_i,a_i)_{i \in I}$ and every $h \in H$ we have that there is $i \in I$ such that $x(h\underline{w_i})\neq a_i$. This means that the algorithm when run on the loop corresponding to $(w_i,a_i)$ will have the tentacle tip on $h\underline{w_i}$ looking at the symbol $x(h\underline{w_i})$. Therefore the next step of the algorithm will change the value of the $\texttt{DANGER}$ variable to $0$ and not go to the final state $q_F$. As this holds true for every $h \in H$ and $n \in \NN$, it follows that $q_F$ is not reached in any computation zone and thus $z \in Z$.

    Conversely, let $z = (\alpha,\delta,\beta,\gamma,\tau) \in Z$ and let $x = \phi(z)$. Suppose that $x \notin X$ it follows that there exists $h \in H$ and $n \in \NN$ such that if $c_n = (w_i,a_i)_{i \in I}$ then for every $i \in I$ we have $x(h\underline{w_i}) = a_i$. Consider the computation zone whose leftmost position is on $h$. Eventually the algorithm produces the pattern coding $c_n$ and each iteration of the loop keeps the variable $\texttt{DANGER}=1$ unchanged. Therefore at the end of the loop the machine goes to state $q_F$, which raises a contradiction because we assumed that $z \in Z$. Thus $x \in X$.\end{proof}

\begin{proof}[Proof of~\Cref{thm:main_theorem}]
    Let $H$ be a finitely generated group, $N$ a non-amenable group and $X$ an effectively closed $H$-subshift. If $H$ is finite, then $X$ is an SFT, and thus so is its free extension to $H \times N$. If $H$ is infinite and with decidable word problem,~\Cref{lem:Z_subfactors_onto_X} ensures that there exists a rooted SFT $Z$ which root-factors onto $X$. By~\Cref{lem:reduction} we obtain that the free extension of $X$ to $H \times N$  is a sofic subshift.
\end{proof}
    
\section{Applications}\label{sec:aplications}}

The first consequence of~\Cref{thm:main_theorem} is rather straightforward. If $N$ is finitely generated, we can grab a free extension and turn it into a trivial extension by forcing that symbols are constant along the generators of $N$. 

\begin{corollary}\label{cor:simulation_trivial_expansive}
    Let $H,N$ be finitely generated groups. Let $H$ have decidable word problem and let $N$ be non-amenable. Then the trivial extension of every effectively closed $H$-subshift to $G=H\times N$ is sofic.
\end{corollary}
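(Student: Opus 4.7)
The plan is to reduce the statement to \Cref{thm:main_theorem} by superimposing on an SFT cover of the free extension a finite-type constraint that forces configurations to be constant along the $N$-direction. Fix a finite symmetric generating set $S_N$ of $N$ (which exists because $N$ is finitely generated) and set
\[
W = \{ x \in A^G : x(g) = x(g(1_H, s)) \text{ for every } g \in G,\, s \in S_N \}.
\]
Then $W$ is a $G$-SFT (nearest-neighbor with respect to $(\{1_H\}\cup S_H)\times(\{1_N\}\cup S_N)$), and since $S_N$ generates $N$, a direct check shows that the trivial extension $X^{\mathrm{triv}}$ of $X$ to $G$ equals $\widetilde{X}\cap W$, where $\widetilde{X}$ denotes the free extension of $X$ to $G$.

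By \Cref{thm:main_theorem}, the free extension $\widetilde{X}$ is sofic, so there is a $G$-SFT $Y$ and a topological factor map $\pi\colon Y\to\widetilde{X}$. By the Curtis--Hedlund--Lyndon theorem, $\pi$ is realized by a local rule through some finite window $F\Subset G$, i.e.\ $\pi(y)(g)=\Phi((g^{-1}y)|_F)$. Consequently, for each $s\in S_N$ the condition $\pi(y)(g)=\pi(y)(g(1_H,s))$ depends only on the restriction of $y$ to the finite set $F\cup (1_H,s)^{-1}F$, so it can be enforced by finitely many forbidden patterns on $Y$. Imposing these constraints across all $s\in S_N$ yields a $G$-SFT $Y'\subseteq Y$ with $Y'=\pi^{-1}(W)=\pi^{-1}(\widetilde{X}\cap W)$.

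Using that $\pi(Y)=\widetilde{X}$, I would conclude $\pi(Y')=\widetilde{X}\cap W = X^{\mathrm{triv}}$, exhibiting $X^{\mathrm{triv}}$ as a topological factor of the SFT $Y'$, and hence sofic. The only delicate point is verifying that pulling back the finitely many defining constraints of $W$ through the block code $\pi$ keeps the constraints of finite type, which is precisely the content of Curtis--Hedlund--Lyndon combined with the finiteness of $S_N$; beyond this there is no real obstacle.
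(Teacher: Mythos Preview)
Your proof is correct and follows essentially the same approach as the paper: both observe that the trivial extension equals the intersection of the (sofic) free extension with the SFT of $N$-constant configurations, and conclude soficity. The only difference is cosmetic---the paper invokes the standard fact that the intersection of a sofic subshift with an SFT is sofic, while you unfold this by explicitly pulling back the constancy constraints through the factor map.
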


\begin{proof}
    Let $X$ be an effectively closed $H$-subshift. By~\Cref{thm:main_theorem}, the free extension $\widetilde{X}$ of $X$ to $G$ is sofic. Let $S\Subset N$ be a finite set of generators and consider the $G$-subshift $Y$ given by \[  Y = \{ x \in \widetilde{X} : \mbox{ for every } s \in S \mbox{ and } (h,n)\in G, x(h,n) = x(h,ns)\}.\]
    The extra rule is local and thus $Y$ is still a sofic $G$-subshift. It is clear that $Y$ is precisely the trivial extension of $X$ to $G$.\end{proof}

\subsection{Simulation of actions of non-amenable groups}\label{sec:simulationproduct}


The purpose of this section is to prove a simulation theorem for effectively closed actions of a finitely generated non-amenable group $N$. More precisely, we shall prove~\Cref{thm:simulation_nonamenable}. The main idea is to construct an effectively closed $H$-subshift such that every configuration encodes a single element of the set representation of an effectively closed action $N\curvearrowright X$ in such a way that this coding is invariant under shifts by elements of $H$. By~\Cref{thm:main_theorem} the free extension of said subshift to $H \times N$ is sofic, thus we just need to add some local rules to force each $H$-coset to communicate with its neighbors in such a way that the natural topological factor of the $N$-subaction is precisely $N\curvearrowright X$.

\begin{proof}[Proof of~\Cref{thm:simulation_nonamenable}]
Let $X \subseteq A^\NN$ and $N \curvearrowright X$ be an effectively closed action. Let $S$ be a finite generating set of $N$ which contains the identity and for which its set representation $\operatorname{Rep}(G \curvearrowright X, S)$ is an effectively closed set (see~\Cref{def:ECaction}). Following the idea from~\cite{BS2018}, we embed this set in an effectively closed $\ZZ$-subshift using Toeplitz codings. Let $B$ be an alphabet. Consider the map $\psi\colon B^\NN\to (B \cup \{ \$ \} )^\ZZ$ defined by
\[
\mathbf{\psi}(y)_j=\begin{cases}
y_n  & \textrm{ if $j=3^n$ mod $3^{n+1}$ for some $n \in \NN$} \\
\$ & \textrm{ otherwise;}
\end{cases}
\]

That is, the image of some $y \in B^{\NN}$ looks as follows (we show the restriction to $\{0,\dots, 30\}$) \[\psi(y)=\dots \$y_0\$y_1y_0\$\$y_0\$y_2y_0\$y_1y_0\$\$y_0\$\$y_0\$y_1y_0\$\$y_0\$y_3y_0\$ y_1 \dots\]

For a set $Y\subset B^{\NN}$, consider the $\ZZ$-subshift $\mathbf{Top}(Y)$ obtained as the topological closure of the union of the orbits of $\psi(y)$ for some $y \in Y$ under the shift action. One can show (see Section 3 of~\cite{BS2018}) that if $Y$ is an effectively closed set, then $\mathbf{Top}(Y)$ is an effectively closed $\ZZ$-subshift which admits a continuous (and computable) map $\phi \colon \mathbf{Top}(Y) \to Y$ which is constant on orbits. Indeed, in every configuration $x\in\mathbf{Top}(B^{\NN})$ which is in the orbit closure of some $\psi(y_0y_1y_2\dots)$, notice that the symbol $\$$ can be preceded only either by $\$$ or by $y_0\in B$ which appears with period three. If we remove the subwords $y_0\$$ in $x$, one obtains a  new element of $\mathbf{Top}(B^{\NN})$ which now comes from the orbit closure of $\psi(y_1y_2y_3\dots)$. The map  $\phi \colon \mathbf{Top}(Y) \to Y$ is obtained by repeating the procedure recursively. 

Now we can set $B = A^S$ and $Y = \operatorname{Rep}(G \curvearrowright X, S)$ and thus $T = \mathbf{Top}(Y)$ is an effectively closed $\ZZ$-subshift which admits a continuous map $\phi \colon T\to \operatorname{Rep}(G \curvearrowright X, S)$.

By the result of Seward we used in the previous section (\cite[Theorem 1.4]{Seward2014}), there exists a translation-like action of $\ZZ$ on $H$. Fix a finite symmetric set $K$ of generators of $H$ such that there exists a translation-like action $f\colon H \to H$ with $h^{-1}f(h) \in K$. Consider the alphabet $A_{D}=K \times K$. For $a = (k,k')\in K \times K$ denote $\ell(a) = k$, $r(a)=k'$ where $\ell$ and $r$ stand for ``left'' and ``right''.

Consider the $H$-SFT $D$ as the space of all configurations $d \in (A_{D})^{H}$ such that for any $h \in H$, \begin{enumerate}
	\item if $s = r(d(h))$ and $u = \ell(d(hs))$ then $s = u^{-1}$;
	\item if $s = \ell(d(h))$ and $u = r(d(hs))$ then $s = u^{-1}$.
\end{enumerate}

Both rules are local and thus $D$ is an $H$-SFT which encodes actions of $\ZZ$ on $H$ which are bounded by $K$. Given $d \in D$ and $h \in H$, denote by $R_d(h) = h \cdot r(d(h))$ and $L_d(h) = h \cdot \ell(d(h))$, and notice that $L_d$ is the inverse of $R_d$ as bijections on $H$.

Finally, Let $D[T] \subseteq D\times(A^{S}\cup\{\$\})^H$ be the $H$-subshift such that $(d,y)\in D[T]$ if and only if for every $h,h'\in H$ one has \[ \left( y(R_d^n(h))  \right)_{n \in \ZZ} \in T \mbox{ and }  \phi(\left( y(R_d^n(h))  \right)_{n \in \ZZ}) = \phi(\left( y(R_d^n(h')) \right)_{n \in \ZZ}).   \]

In simpler words, in each copy of $\ZZ$ induced by $d$ there is an element of $T$, and all of these elements of $T$ code the same element of $\operatorname{Rep}(G \curvearrowright X, S)$. As $D$ is an SFT, and $T$ is effectively closed, it follows that $D[T]$ is an effectively closed $H$-subshift. Moreover, as $D$ contains one element which encodes the translation-like action $f$, it follows that $D[T]$ is nonempty.

Let $\widetilde{D}[T]$ be the free extension to $H \times N$ of the effective $H$-subshift $D[T]$. As the word problem of $H$ is decidable and $N$ is non-amenable, it follows by Theorem~\ref{thm:main_theorem} that $\widetilde{D}[T]$ is a sofic subshift. Let us introduce some notation, we shall write configurations of $\widetilde{D}[T]$ as pairs $(d,t) \in (A_D)^{H \times N} \times (A^S \cup \{\$\})^{H \times N}$. Also, for $b \in A^S \cup \{\$\}$ and $s \in S$ we write $\widetilde{\pi}_s(b)$ for its $s$-th coordinate if $b \in A^S$ and set $\widetilde{\pi}_s(\$)=\$$.

Finally, consider $H \times N$-subshift $W\subset \widetilde{D}[T]$ which consists on the configurations $(d,y) \in \widetilde{D}[T]$ that satisfy the following two additional local rules for every $(h,n) \in H \times N$:

\begin{enumerate}
    \item For every $s \in S$, $d(h,n) = d(h,ns)$.
    \item For every $s \in S$, $\widetilde{\pi}_s(y(h,n)) = \widetilde{\pi}_{1_N}(y(h,ns^{-1}))$.
\end{enumerate}

In other words, we force the restriction of $d$ in each $H$-coset to be exactly the same, and the force the $s$-th coordinates in every position to correspond to the $1_N$-th coordinates in the positions reached by moving by $s\in S$. Notice that as $d$ is the same in every coset, the map $R_d$ is well defined as above. These two rules are clearly local and thus $W$ is sofic as well. Let $Z$ be a subshift of finite type on $H\times N$ which factors onto $W$ through some map $\varphi\colon Z \to W$.

Let $\gamma \colon W \to T$ be the map given by $\gamma(d,y) = \left(y(R_d^n(1_H),1_N) \right)_{n \in \NN}$. That is, the map which assigns to $(d,y) \in W$ the element from the set representation obtained by following the identity.

Finally, consider $\rho \colon Z \to X$ defined by $\rho=\pi_{1_N}\circ\phi \circ \gamma \circ \varphi$. As all maps are continuous and surjective, it follows that $\rho$ is continuous and surjective. We just need to show that it factors onto the trivial extension of $N \curvearrowright X$.

Let $z \in Z$, $h \in H$ and $\varphi(z) = (d,y) \in W$. By the second rule of the construction of $D[T]$ we have that $\phi\circ \gamma((h,1_N)\cdot (d,y)) = \phi\circ \gamma (d,y)$ and thus $\rho((h,1_N)\cdot z) = \rho(z)$.

On the other hand, let $s \in S$ and notice that one has for any $k \in \NN$
\[\gamma(d,y)(k) = y(R_d^n(1_H),1_N) \mbox{ and } \gamma((1_H,s)\cdot(d,y))(k) = y(R_d^n(1_H),s^{-1}). \]
By the second local rule one obtains that $\widetilde{\pi}_{1_N}(\gamma((1_H,s)\cdot(d,x))(n)=\widetilde{\pi}_s(\gamma(d,x)(n)$ from where we obtain that
\[\pi_{1_N}\circ\phi\circ \gamma ((1_H,s)\cdot(d,y))= \pi_{s}\circ\phi\circ \gamma (d,y).  \]

From the above identity and the fact that $\pi_s = s \cdot \pi_{1_N}$ on the set representation, one concludes that $\rho( (1_H,s)\cdot z) = s \cdot \rho(z)$.

Putting together the two identities and the fact that $S$ generates $N$, we obtain that for every $(h,n)\in H \times N$, $\rho((h,n)\cdot z) = n \cdot \rho(z)$ and thus $Z$ factors onto the trivial extension of $N \curvearrowright X$ to $H \times N$.\end{proof}

\subsection{Strongly aperiodic SFT for direct products with a non-amenable component}

\begin{definition}
A $G$-subshift is \define{strongly aperiodic} if $G$ acts freely on it, that is, if $gx = x$ for some $x \in X$ then $g = 1_{G}$.
\end{definition}

In~\cite{ABT2018} it was shown that every finitely generated group with decidable word problem admits a nonempty effectively closed subshift which is strongly aperiodic. We shall use this result to produce nonempty strongly aperiodic SFTs on groups of the form $G = H \times N$ where both groups are infinite, finitely generated, have decidable word problem and $N$ is non-amenable.

\begin{proof}[Proof of~\Cref{thm:aperiodic}]
 As $H,N$ have decidable word problem, by~\cite[Theorem 2.6]{ABT2018}, the groups $H$ and $N$ admit nonempty strongly aperiodic effective subshifts $X_1$ and $X_2$ respectively. By~\Cref{thm:main_theorem} there exists an $(H\times N)$-SFT $Z_1$ which factors onto the free extension of $H \curvearrowright X_1$, and by~\Cref{thm:simulation_nonamenable}, there exists there exists an $(H\times N)$-SFT $Z_2$ which factors onto the trivial extension of $N \curvearrowright X_1$. Let $Z = Z_1\times Z_2$ with the product shift action.
 
Let $(h,n)\in H \times N$ and $(z_1,z_2)\in Z$ such that $(h,n)\cdot (z_1,z_2)=(z_1,z_2)$. One has $(h,n)\cdot z_2=(1_H,n)\cdot z_2=z_2$. As $X_2$ is strongly aperiodic, applying the topological factor map we obtain that $n=1_N$. Then one has $(h,1_N)\cdot z_1 = z_1$. As $X_1$ is strongly aperiodic, applying the respective topological factor map we obtain that $h=1_H$. Thus $Z$ must be a strongly aperiodic SFT.\end{proof}

In the particular case of $H=\ZZ$ and $N = F_2$ is the free group on two generators, we partially recover a result of~\cite{AuBiHTB_2022} which shows the existence of a nonempty, minimal and strongly aperiodic SFT on $\ZZ \times F_2$ (we don't get minimality with our technique). In the case where $H$ is also non-amenable we recover a particular case of~\cite[Corollary 8.18]{BaSaSa_2021}.

\section{Questions}\label{sec:questions}

In~\Cref{thm:main_theorem} we used the hypothesis that the group $H$ has decidable word problem. This was fundamentally used to ensure that every word occurring in a pattern coding is geodesic and thus that tentacles will not collide with themselves. We do not know whether this condition can be removed or at least replaced with the milder condition that $H$ is recursively presented (or co-recursively presented).

\begin{question}
    Does~\Cref{thm:main_theorem} hold if $H$ does not have decidable word problem?
\end{question}

In~\Cref{cor:simulation_trivial_expansive} we showed that under the additional condition that $N$ is finitely generated, the trivial extension of an effectively closed subshift on $H$ to $H \times N$ is sofic. We were not able to extend this result to the trivial extension of a (non-expansive) effectively closed action of $H$ using our main result.

\begin{question}
    Let $H,N$ be infinite and finitely generated groups with decidable word problem and such that $N$ is non-amenable. Let $H \curvearrowright X$ be an effectively closed action. Is the trivial extension of $H \curvearrowright X$ to $H \times N$ the topological factor of a subshift of finite type?
\end{question}

Another questions concerns completing the picture for free extensions of subshifts. In~\Cref{tab:my_label} we summarize what we know for the free extension of an $H$-subshift to $H \times K$. In the table we suppose both $H,K$ are infinite, finitely generated and with decidable word problem.

\begin{table}[]
    \centering
    \begin{tabular}{|c|c|c|}
    \hline
    \diagbox{$H$}{$K$} & amenable & non-amenable \\
    \hline
        amenable & not always sofic & always sofic \\
        \hline
        non-amenable & depends & always sofic \\
        \hline
    \end{tabular}
    \vspace{4ex}
    \caption{Soficity of free extensions of effectively closed $H$-subshifts to $H\times K$}
    \label{tab:my_label}
\end{table}

In the case where both $H$ and $K$ are non-amenable, we actually have that every effectively closed $H\times K$-subshift is sofic, and thus the result also follows from~\cite{BaSaSa_2021}. In the case where both groups are amenable we constructed a single counterexample, but indeed we have no counterexamples to the generalization of Jeandel's question of whether soficity of the free extension implies soficity of the subshift on $H$.

\begin{question}
    Let $H$ and $K$ be two infinite, finitely generated and amenable groups with decidable word problem. Let $X$ be an effectively closed $H$-subshift whose free extension to $H \times K$ is sofic. Is $X$ sofic?
\end{question}

The remaining case is when $H$ is non-amenable and $K$ is amenable. Here we can have both behaviors: if $H$ is a self-simulable group (see~\cite{BaSaSa_2021}, for instance $H= F_2 \times F_2$), then every effectively closed subshift on $H$ is sofic, and thus so are their free extensions to $H \times K$. In the other hand, if $H = F_2$ and $K = \ZZ$, then the construction in~\cite[Proposition 3.4]{BaSaSa_2021} provides an example of an effectively closed $F_2$-subshift (again a variant of the mirror shift) whose free extension to $F_2 \times \ZZ$ is not sofic.

\begin{question}
    Let $H$ be an infinite amenable group. For which non-amenable groups $N$ does it hold that the free extension of every effectively closed $N$-subshift to $N \times H$ is sofic? 
\end{question}

Finally, in~\Cref{thm:aperiodic} we proved that every product of two infinite and finitely generated groups with decidable word problem $H \times K$ admits a nonempty strongly aperiodic SFT as long as one of them is nonamenable. In~\cite{Barbieri_2019_DA} it was proven that the direct product of three infinite finitely generated groups with decidable word problem always admits a nonempty strongly aperiodic SFT, thus the remaining case is when we just have two groups and they are both amenable.

\begin{question}
    Let $G = H \times K$ be the direct product of two infinite finitely generated amenable groups with decidable word problem. Does $G$ admit a nonempty strongly aperiodic SFT?
\end{question}

Another interesting question is whether these strongly aperiodic SFTs can be constructed with properties of dynamical importance, such as mixing, transitivity, minimality, unique ergodicity, etc. Our current technique does not provide any such properties. This is particularly relevant due to the fact that the construction in~\cite{AuBiHTB_2022} for $G = \ZZ \times F_2$ is minimal.

\Addresses

\bibliographystyle{abbrv}
\bibliography{ref}

\end{document}